\def\dist{\mathop{\rm dist}\nolimits}
\def\diam{\mathop{\rm diam}\nolimits}
\newtheorem{thm}{Theorem}[section] 
\newtheorem{lem}[thm]{Lemma} \newtheorem{prop}[thm]{Proposition}
\newtheorem{cor}[thm]{Corollary}
\newtheorem*{rem*}{Remark} 
\numberwithin{equation}{section}
\newcommand{\scalp}[2]{#1\cdot#2}
\newcommand{\gener}{{\cal L}}
\newcommand{\domgen}{{\cal D}}
\newcommand{\KatoC}{{\cal J}}
\newcommand{\Fourier}[1]{\mathcal F (#1)}
 \newcommand{\R}{\mathds{R}}
\newcommand{\Rd}{{\R^{d}}}
\newcommand{\N}{\mathds{N}}
\renewcommand{\leq}{\leqslant} 
\renewcommand{\geq}{\geqslant}
\newcommand{\indyk}[1]{\mathds{1}_{#1}}
\newcommand{\ind}[1]{{\bf 1}_{#1}}
\title{Heat kernels of non-local Schr\"odinger operators with Kato potentials}
\author{Tomasz Grzywny, Kamil Kaleta, and Pawe{\l} Sztonyk\\
  Wroc\l{}aw University of Science and Technology \\ Wybrzeże Wyspiańskiego 27, 50-370 Wrocław, Poland}
\begin{document}
\maketitle
\footnotetext{\emph{2000 Mathematics Subject Classification: 35A08, 47A55, 47D06, 47D08, 60J35.} T. Grzywny and P. Sztonyk were supported by the National Science Centre, Poland, grant no. 2017/27/B/ST1/01339. K. Kaleta was supported by the National Science Centre, Poland, grant no. 2019/35/B/ST1/02421.\\

\emph{e-mail address}: tomasz.grzywny@pwr.edu.pl, kamil.kaleta@pwr.edu.pl, pawel.sztonyk@pwr.edu.pl  }

 
\begin{abstract}
 We study heat kernels of Schr\"odinger operators whose kinetic terms are non-local operators built for sufficiently regular symmetric L\'evy measures with radial decreasing profiles and potentials belong to Kato class. Our setting is fairly general and novel -- it allows us to treat both heavy- and light-tailed L\'evy measures in a joint framework.  
We establish a certain relative-Kato bound for the corresponding semigroups and potentials. This enables us to apply a general perturbation technique to construct the heat kernels and give sharp estimates of them. Assuming that the L\'evy measure and the potential satisfy a little stronger conditions, we additionally obtain the regularity of the heat kernels.  Finally, we discuss the applications to the smoothing properties of the corresponding semigroups. Our results cover many important examples of non-local operators, including \emph{fractional} and \emph{quasi-relativistic} Schr\"odinger operators. 
\end{abstract}

\section{Introduction}
Let $d\in\N.$ We consider non-local operators
$$
  \gener\varphi(x) = \int_{\Rd} \left(\varphi(x+y)-\varphi(x)-\nabla \varphi(x)\cdot y \ind{B(0,1)}(y)\right)\, \nu(dy), \quad \varphi\in C^2_c(\Rd),
$$
where $\nu$ is a given L\'evy measure, i.e.\ $\int (1\wedge |y|^2)\,\nu(dy)<\infty.$  We assume here that
$\nu$ is symmetric (i.e.\ $\nu(-A)=\nu(A)$, for every Borel set $A\subset\Rd$) and absolutely continuous with a density $\nu(x)$   which has a sufficiently regular profile.  More precisely, we require that there exist a continuous, positive and decreasing function $f:\: (0,\infty)\to (0,\infty)$ and the positive constants $C_1,C_2$ such that $$
C_1 f(|x|) \leq \nu(x)\leq C_2 f(|x|), \quad x\in\Rd\setminus\{ 0 \},
$$ 
and $f$ satisfies regularity conditions (A) and (B) stated below. 

The goal of this article is to prove existence, estimates and regularity of heat kernels for non-local Schr\"odinger operators 
$\gener +q$, where the potential $q$ belongs to  the Kato class corresponding to $\gener$. 

We  impose  the following two  assumptions on the profile $f$. 

\bigskip

\begin{itemize}
\item[\textbf{(A)}]  
There exist a constant $C_3>0$ such that
\begin{equation*}
  \int_{\R^d} f_{1}(|x-y|) f_{1}(|y|) dy \leq C_3 \, f(|x|), \quad |x| \geq 2,
\end{equation*}
where $f_{1}(r) = \indyk{(1,\infty)}(r)f(r)$.
\end{itemize}

\bigskip

\begin{itemize}
\item[\textbf{(B)}]  

The function $s\to s^d f(s)$ is decreasing on $(0,2]$
and 
there exist constants $C_4,C_5 > 0$ and
$ 0<\alpha_1\leq \alpha_2<2 \wedge d$
such that    
\begin{equation}\label{eq:doubling_kappa}
	C_4 \left(\frac{R}{r}\right)^{d+\alpha_1} \leq \frac{f(r)}{f(R)} \leq 
	C_5 \left( \frac{R}{r} \right)^{d+\alpha_2},\quad 2 \geq R\geq r >0.
\end{equation}
\end{itemize}
\medskip
\noindent
 These assumptions cover a large class of profiles that lead to many important non-local operators. The key examples are the \emph{fractional Laplacian} $\gener = -(-\Delta)^{\alpha/2}$ and the corresponding \emph{relativistic operators} $\gener = -(-\Delta+m^{2/\alpha})^{\alpha/2}+m$, where $\alpha \in (0,2)$ and $m>0$. These and further examples will be discussed later in more detail.    

We say that a Borel function 
$q:\: \Rd \to \R$ belongs to the Kato class $\KatoC$ corresponding to the L\'evy operator $\gener$ 
if
\begin{equation*}
	  \lim_{\delta\to 0^+} \sup_{x\in\Rd} \int_{|y-x|<\delta} \frac{1}{|y-x|^{2d} f(|y-x|)} |q(y)|\, dy = 0.
	\end{equation*}
The following theorem is  the first main result  of the paper.

\begin{thm}\label{MTh} Let $q\in\KatoC$ and let (A) and (B) hold. 
Then there exists a function 
$\widetilde{p}: (0,\infty)\times \Rd \times \Rd \to (0,\infty)$ such that
 \begin{align} \label{eq:weak_sol}
  \int_0^\infty \int_{\Rd} \widetilde{p}(t,x,y)\left( \partial_t \phi(s+t,y) + \gener\phi(s+t,y) + q(y)\phi(s+t,y) \right)\, dydt = -\phi(s,x)
\end{align}
for every $x\in\Rd,$ $s>0$ and $\phi \in C_c^\infty(\R\times \Rd)$. The kernel $\widetilde{p}(t,x,y)$ 
for every $t>0$ is a jointly continuous  and symmetric function of $(x,y)$, satisfies the
semigroup property
\begin{equation}\label{CH-K}
  \int_{\Rd} \widetilde{p}(s,x,z) \widetilde{p}(t,z,y)\, dz = \widetilde{p}(t+s,x,y),\quad x,y,\in \Rd, t,s>0,
\end{equation}
and for every $\eta\in (0,1)$ 
there exists $h_\eta>0$ such that for every $m\in\N$ and $0<t<mh_\eta$ we have
\begin{equation}\label{tildep_est}
 (1-\eta)^m \leq \frac{\widetilde{p}(t,x,y)}{p(t,y-x)} \leq \frac{1}{1-\eta}\exp{\frac{\eta t}{h_{\eta}(1-\eta)}},\quad x,y\in\Rd,
\end{equation}
where $p$ is a heat kernel of the operator $\gener$.
\end{thm}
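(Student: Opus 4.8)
The plan is to construct $\widetilde p$ by the classical Duhamel/perturbation series. Write $p(t,x,y)=p(t,y-x)$ for the (known) heat kernel of $\gener$, which by assumptions (A) and (B) enjoys the standard sharp estimates $p(t,x) \approx \left(p_{\mathrm{bd}}(t,x) \wedge t\,\nu(x)\right)$ familiar for unimodal Lévy semigroups with regularly varying profiles — in particular the on-diagonal bound $p(t,0)\le c\,t^{-d/\alpha_{\!*}}$-type control and the convolution/3P-type inequality $\int p(s,x,z)\frac{1}{|z-y|^{2d}f(|z-y|)}p(t,z,y)\,dz$ being comparable, up to a time factor, to $p(t+s,x,y)$ times the Kato integrand. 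The first step is therefore to record these input estimates for $p$ (citing the earlier part of the paper where (A)/(B) are shown to give them) and to translate the Kato condition on $q$ into the quantitative \emph{relative-Kato bound} announced in the abstract: for every $\eta\in(0,1)$ there is $h_\eta>0$ with
\[
\sup_{x\in\Rd}\int_0^{h_\eta}\!\!\int_{\Rd} p(t,x,y)\,|q(y)|\,dy\,dt \le \eta .
\]
This is the engine of the whole argument and I expect it to be the step that really uses the structure of $f$: one splits $|y-x|<\delta$ and $|y-x|\ge\delta$, on the near region uses $p(t,x,y)\le c\,t\,\nu(y-x)\wedge (\text{on-diagonal})$ bounded via the Kato integrand after integrating in $t$, and on the far region uses integrability of $\int_0^{h}p(t,\cdot)\,dt$ together with $q$ locally in the Kato class (hence locally $L^1$ in the relevant weight). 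Getting the constant to be an arbitrarily small $\eta$ (not just finite) by shrinking $h_\eta$ is exactly the Kato smallness.

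With the relative-Kato bound in hand, the second step is to define the perturbation series. Set $\widetilde p_0 = p$ and
\[
\widetilde p_{n}(t,x,y) = \int_0^t\!\!\int_{\Rd} \widetilde p_{n-1}(t-s,x,z)\,q(z)\,p(s,z,y)\,dz\,ds,
\qquad
\widetilde p = \sum_{n=0}^\infty \widetilde p_n .
\]
Using $|q| = q^+ + q^-$ one first controls $\sum_n \int_0^t\!\!\int \widetilde p_{n}^{\,|q|}$ where the recursion uses $|q|$; by iterating the relative-Kato bound one gets, for $t<m h_\eta$, the geometric-type estimate $\sum_{n\ge N}\widetilde p_n^{\,|q|}(t,x,y) \le (\text{small})^{\lfloor\cdot\rfloor}\,p(t,y-x)\cdot(\text{mild growth in }t)$, which yields both absolute convergence of the series and the two-sided bound \eqref{tildep_est}: the lower bound $(1-\eta)^m\le \widetilde p/p$ comes from keeping only finitely many partial sums and controlling the negative part by the same smallness, while the upper bound $\le \frac{1}{1-\eta}\exp\!\big(\tfrac{\eta t}{h_\eta(1-\eta)}\big)$ is the sum of the majorizing geometric series across $\lceil t/h_\eta\rceil$ blocks. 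This is the standard Duhamel bookkeeping; the only subtlety is to run it with $q^+$ and $q^-$ separately so that the lower bound survives a sign-changing potential, and to keep every estimate uniform in $x,y$.

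The third step is the qualitative properties. Symmetry $\widetilde p(t,x,y)=\widetilde p(t,y,x)$ follows term-by-term from symmetry of $p$ and a change of variables in the iterated integrals (each $\widetilde p_n$ is symmetric by reversing the time-ordered chain $x\to z_1\to\cdots\to y$). The Chapman–Kolmogorov identity \eqref{CH-K} is proved first for each approximation from the semigroup property of $p$ and Fubini, then passed to the limit using the dominated convergence justified by the geometric majorant from Step 2. Joint continuity of $(x,y)\mapsto\widetilde p(t,x,y)$ is obtained by showing the series converges locally uniformly: $p$ itself is continuous (unimodal Lévy heat kernels under (A)/(B) are), and each convolution with the $L^1$-type kernel $q$ improves/preserves continuity, with the tail of the series uniformly small by Step 2 — alternatively one uses \eqref{CH-K} to write $\widetilde p(t,x,y)=\int \widetilde p(t/2,x,z)\widetilde p(t/2,z,y)\,dz$ and deduce continuity (even boundedness/smoothing) from that factorization. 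Finally, the weak-equation \eqref{eq:weak_sol} is verified by plugging the series into the left side: the $n=0$ term gives $-\phi(s,x)$ because $p$ solves the homogeneous equation $\partial_t p = \gener p$ in the distributional sense against $C_c^\infty$ test functions, the telescoping Duhamel identity $\partial_t\widetilde p_n = \gener\widetilde p_n + q\,\widetilde p_{n-1}$ (in the weak sense) makes all $n\ge1$ contributions cancel in pairs, and the interchange of $\sum$, $\int$ and the operators is legitimate by the uniform bounds. The main obstacle, as flagged, is Step 1 — establishing the relative-Kato smallness with the sharp weight $|y-x|^{-2d}f(|y-x|)^{-1}$ from the genuinely two-regime estimates on $p$ provided by (A) and (B); everything after that is the general perturbation machinery applied with care to the sign of $q$.
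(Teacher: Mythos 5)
Your overall architecture (Duhamel/perturbation series, geometric majorant, passing to the limit for symmetry, Chapman--Kolmogorov and continuity, telescoping for the weak equation) matches the structure of the paper's proof, which delegates most of this machinery to \cite[Theorems~2,3]{BogdanHansenJakubowski2008} and \cite[Lemma 4]{BogdanJakubowskiSydor2012}. But there is a genuine gap in what you call Step~1, and it is the step the paper itself flags as the only real difficulty.

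What you write down as the ``relative-Kato bound,''
\[
\sup_{x\in\Rd}\int_0^{h_\eta}\!\!\int_{\Rd} p(t,x,y)\,|q(y)|\,dy\,dt \le \eta,
\]
is the Khasminskii (one-kernel) condition. That bound does give $L^\infty$ convergence of the perturbation series, but it cannot deliver the pointwise two-sided comparison \eqref{tildep_est}, because it contains no information relating the chained product $p(s,z-x)p(t-s,y-z)$ back to $p(t,y-x)$. The actual relative-Kato condition you need, and which the paper establishes in Theorem~\ref{th:VeryIT}, is the two-kernel bound
\[
\int_0^t\!\!\int_{\Rd} p(s,z-x)\,|q(z)|\,p(t-s,y-z)\,dz\,ds \;\le\; \eta\, p(t,y-x),
\qquad t\in(0,h_\eta),\ x,y\in\Rd.
\]
The jump from your Khasminskii bound to the assertion in your Step~2 that
``$\sum_{n\ge N}\widetilde p_n^{\,|q|}(t,x,y) \le (\text{small})\cdot p(t,y-x)\cdot(\text{mild growth in }t)$''
is exactly where you would need this stronger inequality, and your sketch does not prove it. In the classical setting one would deduce it from a global 3G/3P inequality, but --- as the paper stresses --- when $f$ decays faster than polynomially the 3G inequality fails at large distances, which is the whole novelty of the setting. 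Your sketch of Step~1 (split $|y-x|\lessgtr\delta$, use the Kato integrand on the near region, integrability of $\int_0^h p$ on the far region) is the standard proof of the Khasminskii bound, not of the two-kernel bound; it never confronts the failure of 3G.

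The paper's workaround does not appear in your proposal at all. It splits on $|x-y|$ relative to $h(t_0)$: in the near regime it applies a \emph{restricted} 3G inequality (Lemma~\ref{lem:3G}, valid only for $x,y$ in a fixed ball) together with the convolution estimate of Lemma~\ref{conv_st_estimate}; in the far regime it relies crucially on assumption (A), which supplies the far-field convolution bound $\nu_r*\nu_r \le c_r\,\nu$, fed through Lemma~\ref{int_gq_est} to control $\int f(|z-x|)|q(z)|f(|y-z|)\,dz$ by $f(|y-x|)$. That use of (A) is the mechanism that replaces global 3G for light-tailed L\'evy measures, and it is the analytic heart of the theorem; your proposal identifies the right location of the difficulty but not the missing ingredient. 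A smaller point: the $q^+/q^-$ decomposition you propose is unnecessary --- the BogdanHansenJakubowski framework handles signed $q$ directly by applying the relative-Kato condition to $|q|$ and estimating the tail of the series by a geometric majorant, and the lower bound $(1-\eta)^m$ comes from iterating the semigroup identity over $m$ intervals of length $<h_\eta$ rather than from tracking signs.
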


We  remark that sharp, finite-time horizon, estimates of the kernel $p$ are known, see Lemma \ref{p_t_est} below. Consequently, from \eqref{tildep_est} we obtain sharp and explicit estimates for $\widetilde{p}$.

Our proof of Theorem \ref{MTh} is based on a general perturbation technique which was developed in \cite{BogdanHansenJakubowski2008} -- the kernel $\widetilde p$ is constructed by means of perturbation series.  That paper gives sufficient conditions
for an initial semigroup and a potential in order to get a new perturbed kernel and obtain its estimates.  The condition we have to check here, called a \emph{relative Kato condition}, is given by \eqref{eq:RelativelyKato}. 
This bound is easy to verify if the  kernel of the original semigroup satisfies globally the 3G inequality (see e.g.\ \cite[Theorem 4]{BogdanHansenJakubowski2008}; cf.\ its local version in Lemma \ref{lem:3G}) which is the case if $f$ decays polynomially at infinity. In our setting, which allows for much faster decay (including exponential one), the 3G inequality generally does not hold and therefore we need to establish this condition in a different manner. It is done in Theorem \ref{th:VeryIT} which is the key technical step in Section \ref{sec:perturbation}. The proof of Theorem \ref{MTh} is concluded in Section \ref{sec:solution} where we show that the kernel $\widetilde p$ obtained in this way is indeed a solution to \eqref{eq:weak_sol} and we prove its continuity.

Semigroups and heat kernels for Schr{\"o}dinger operators involving Laplacian $\Delta$ are now classical topics -- they were studied in many papers by both analytic and probabilistic methods, see e.g.\ \cite{Simon1982,LiskevichSemenov1998,Zhang2003,BogdanDziubanskiSzczypkowski2019}, to mention just a few contributions in this field. We refer the reader to \cite{BogdanDziubanskiSzczypkowski2019} for an excellent overview of the results concerning the heat kernels of the classical Schr\"odinger operators with Kato class and more singular potentials.

Similar problems have also been investigated for Schr\"odinger operators involving the fractional Laplacian $-(-\Delta)^{\alpha/2}$ and potentials from the corresponding Kato class. 
Estimates of heat kernels for such semigroups were 
obtained in \cite{Song2006} by probabilistic methods and in \cite{BogdanHansenJakubowski2008} by perturbation techniques. 
 The results gradually extended in
the papers \cite{Jakubowski2009,BogdanJakubowskiSydor2012,BogdanHansenJakubowski2013,WangZhang2015,BogdanButkoSzczypkowski2016,BogdanGrzywnyJakubowskiPilarczyk2019,JakubowskiWang2020} to more general semigroups, potentials, spaces and time-dependent cases.

 The novelty of Theorem \ref{MTh} is that it covers non-local operators $\gener$ with L\'evy densities fast decaying at infinity, including exponential and stretched exponential ones, e.g.\ relativistic operators $\gener = -(-\Delta+m^{2/\alpha})^{\alpha/2}+m$. To the best of our knowledge, no similar result has been obtained for Schr\"odinger operators based on such $\gener$'s with Kato class potentials.   
For example, for every $\alpha\in(0,2)$ the following functions 
\begin{equation}\label{eq:def_of_f}
	f(r) = \ind{(0,1]}(r)\cdot r^{-\alpha-d} + 
           e^m \ind{(1,\infty)}(r) \cdot e^{-mr^{\beta}}r^{-d-\eta},
\end{equation} 
where 
$$ 
  m=0 \text{ and } \eta >0,
$$
or
$$
  m>0 \text{ and } \beta\in (0,1) \text{ and } d+\eta \geq 0,
$$
or
$$
  m>0 \text{ and } \beta = 1 \text{ and } \frac{d-1}{2}+\eta > 0,
$$
satisfy the condition (A), which was proved in \cite[Proposition 2]{KaletaSztonyk2017} (see also \cite[Section 3.1]{KaletaSchilling2020} for some general easy-to-check sufficient conditions). In particular,
the both conditions (A) and (B) with the profile function \eqref{eq:def_of_f} hold for the  quasi-relativistic operators,  where we have $\alpha_2=\alpha_1=\alpha,$ $\beta=1,$ and $\eta=\frac{\alpha+1-d}{2}$ (see \cite{Ryznar2002,KulczyckiSiudeja2006}).  Of course they are satisfied also
in the well-known case of the fractional Laplacian (i.e. $\nu(x)=c|x|^{-\alpha-d}$) with $\eta=\alpha_2=\alpha_1=\alpha$ and $m=0.$
 Examples for which $\alpha_1<\alpha_2$ can be found in \cite{Schilling}. We remark in passing that Schr\"odinger semigroups corresponding to non-local operators satisfying (A) have been widely studied for confining potentials, see \cite{KaletaSchilling2020} and further references in that paper. 

Before we proceed to present our second main result, we need some more preparation.
Let $g(s)=s^d f(s).$ If (B) holds then the function $g$ is continuous and decreasing on $(0,2]$ so there exists a decreasing inverse function $g^{-1}: [g(2),\infty) \to (0,2].$ Let $H: (0,\infty) \to (0,\infty)$
be a continuous, increasing function such that $H(t)=g^{-1}\left(\frac{1}{t}\right),$ for
$t\in \left(0,\frac{1}{g(2)}\right].$
For $n>0$, we let
$$
  G_n(t,x) = \min\left\{H(t)^{-d},tf\left(\tfrac{|x|}{4}\right)\right\} + H(t)^{-d}\left(1+\tfrac{|x|}{H(t)}\right)^{-n}, \quad t>0, \ x\in\Rd.
$$

We will say that $q\in\KatoC_a$ for some $a\geq 1$ if 
  \begin{equation}\label{extraKato}
	\lim_{\delta\to 0^+} \sup_{x\in\Rd} \int_{|y-x|<\delta} \frac{1}{|y-x|^{d(1+1/a)} f(|y-x|)^{1/a}} |q(y)|\, dy = 0.
\end{equation}
It follows from (B) that $g(s)=s^df(s)\geq c s^{-\alpha_1}>1,$ for $s$ sufficiently small, hence
we have $\KatoC_a \subset \KatoC_b$ for all $a \geq b \geq 1.$ 
This allows also one to give a sufficient $L^p$-condition which is very useful in applications. 
Indeed, by combining this estimate with the H\"older inequality, we can easily show that for every $p >(ad)/\alpha_1$ the inclusion $L^p(\R^d)+L^{\infty}(\R^d) \subseteq \KatoC_a$ holds. Furthermore it follows from (B) that
if $q\in \KatoC_a$ then the dilation $q(\kappa \cdot)$ also belongs to $\KatoC_a$ for every $\kappa>0.$

 Kato classes are usually defined in the literature in terms of heat kernels.   
	We use here the above simpler condition for $\KatoC_a$ and 
	in Corollary \ref{CorKato_Eq} below we prove the equivalence of definitions.  
We remark that very similar Kato-type spaces were investigated in \cite{KuwaeMori2020}  (cf.\ also \cite[Section B.3]{Simon1982} and \cite{Guneysu2022} for the case of Laplacian).

We need also an additional assumption on regularity of the profile $f$.
\begin{itemize}
\item[\textbf{(C)}]  
There exists a constant $C_6>0$ such that 
\begin{equation*}
  \frac{f(r+\kappa)}{f(r)} \leq C_6 \frac{f(s+\kappa)}{f(s)}, \quad s>r>0,\, \kappa>0.
\end{equation*}
\end{itemize}
We note that (C) holds for every $f$ such that $\log f$ is convex. 
It holds also for $f$ such that (B) is satisfied
globally, i.e.\
for all $R>r>0.$ Of course, every function $f$ given by $\eqref{eq:def_of_f}$ also satisfies (C).   

The following theorem is the second main result of the paper.

\begin{thm}\label{THE2} Let (A), (B) and (C) hold. Let $n>0$.
If $q\in\KatoC_a$ for some $a\in (1,2]$,
then for every $t_0 >0$ there exists $C_1=C_1(n)$ such that
\begin{equation}\label{Hoelkernel}
  |\widetilde{p}(t,z,y) - \widetilde{p}(t,x,y)| \leq C_1  
	   \left(\tfrac{|z-x|}{H(t)}\wedge 1\right)^{\tfrac{\alpha_1}{b}}\left(G_n(t,y-z) + G_n(t,y-x)\right),
\end{equation}
for all $t\in (0,t_0), x,y\in\Rd$, where $\frac{1}{b} = 1 - \frac{1}{a}.$

If, in addition, $\alpha_1 >1$ and $q\in \KatoC_a$ for some $a>\frac{\alpha_1}{\alpha_1 - 1}$, then for $t>0$ and $y \in \R^d$ the function $\widetilde{p}(t,\cdot,y)$ has all partial derivatives everywhere and for every $t_0>0$ there exists $C_2$ such that for every $i\in\{1,...,d\}$ we have
	\begin{equation}\label{DerEst}
	  \left|\frac{\partial}{\partial x_i} \widetilde{p}(t,x,y) \right| \leq C_2 H(t)^{-1} G_n(t,y-x), \quad t\in (0,t_0), \ x,y\in\Rd.
	\end{equation}
\end{thm}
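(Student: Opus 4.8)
\emph{Proof proposal.}
The plan is to start from the perturbation series that produces $\widetilde{p}$ in the proof of Theorem~\ref{MTh}. Writing $\widetilde{p}=\sum_{k\ge0}p_k$ with $p_0(t,x,y)=p(t,y-x)$ and $p_{k+1}(t,x,y)=\int_0^t\int_{\Rd}p(s,w-x)\,q(w)\,p_k(t-s,w,y)\,dw\,ds$, summation over $k$ gives the Duhamel identity
\begin{equation*}
  \widetilde{p}(t,x,y)=p(t,y-x)+\int_0^t\int_{\Rd}p(s,w-x)\,q(w)\,\widetilde{p}(t-s,w,y)\,dw\,ds .
\end{equation*}
Since the whole dependence on $x$ on the right is carried by $p(s,w-x)$, I reduce the statement to three inputs. (a) The sharp bound $p(t,v)\le C_n G_n(t,v)$ of Lemma~\ref{p_t_est}, which together with \eqref{tildep_est} gives $\widetilde{p}(t,x,y)\le C_n G_n(t,y-x)$ for $t<t_0$. (b) The \textbf{increment estimate for the unperturbed kernel}: $|p(t,v+h)-p(t,v)|\le C_n\bigl(\tfrac{|h|}{H(t)}\wedge1\bigr)^{\alpha_1/b}\bigl(G_n(t,v)+G_n(t,v+h)\bigr)$ (with Hölder exponent up to $\alpha_1/2$, whence the restriction $a\in(1,2]$), and, when $\alpha_1>1$, the gradient estimate $|\nabla_v p(t,v)|\le C_n H(t)^{-1}G_n(t,v)$. (c) A quantitative refinement of the relative-Kato bound \eqref{eq:RelativelyKato} based on $q\in\KatoC_a$: inserting the weight $|v|^{d(1+1/a)}f(|v|)^{1/a}$ and using that $r\mapsto r^{d(a+1)}f(r)$ is almost increasing on $(0,2]$ (a consequence of \eqref{eq:doubling_kappa}) together with $g(H(s))=1/s$, one obtains $\int_0^\tau\!\int_{\Rd}G_n(s,w-x)\,|q(w)|\,G_n(t-s,y-w)\,dw\,ds\le C\,\tau^{1/b}\,G_n(t,y-x)$ for $0<\tau\le t\le t_0$ (for $\tau=t$ this sharpens Theorem~\ref{th:VeryIT}); in particular the density $\int_{\Rd}G_n(s,w-x)|q(w)|G_n(t-s,y-w)\,dw\lesssim s^{-1/a}G_n(t,y-x)$. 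Throughout, the far-field parts of the spatial integrals are handled by (A) exactly as in Theorem~\ref{MTh}.

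For \eqref{Hoelkernel} put $r=|z-x|$; if $r\ge H(t)$ the prefactor is $1$ and the bound follows from (a), so assume $r<H(t)$. Apply the Duhamel identity and split the time integral as $\int_0^{H^{-1}(r)}+\int_{H^{-1}(r)}^{t}$. The leading term $p(t,y-z)-p(t,y-x)$ is handled directly by (b). On the range $s>H^{-1}(r)$, where $|h|/H(s)\le1$, I use (b) together with $\widetilde{p}(t-s,w,y)\le C_nG_n(t-s,y-w)$, collapse the $w$-integrals into $G_n(t,y-z)+G_n(t,y-x)$ by the $G_n$-estimates of Theorem~\ref{MTh}, and then bound the time integral of the $(r/H(s)\wedge1)^{\alpha_1/b}$-weighted relative-Kato density --- this is the delicate point, forcing the choice of $\KatoC_a$ with Kato exponent tuned exactly to $1/b=1-1/a$ and also requiring the genuine (not merely bounded) short-scale smallness of the $\KatoC_a$ modulus. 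On the range $s<H^{-1}(r)$ I only use $|p(s,w-z)-p(s,w-x)|\le p(s,w-z)+p(s,w-x)$ and (c), which contributes at most $C\bigl(H^{-1}(r)\bigr)^{1/b}\bigl(G_n(t,y-z)+G_n(t,y-x)\bigr)$; since $H(\tau)^{\alpha_1}\ge c\,\tau$ for $\tau\le t_0$ by \eqref{eq:doubling_kappa}, $\bigl(H^{-1}(r)\bigr)^{1/b}\le C\,r^{\alpha_1/b}\le C(t_0)\bigl(r/H(t)\wedge1\bigr)^{\alpha_1/b}$.

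For \eqref{DerEst} assume $\alpha_1>1$ and $a>\tfrac{\alpha_1}{\alpha_1-1}$, i.e.\ $\tfrac1b>\tfrac1{\alpha_1}$. I differentiate the Duhamel identity in $x_i$ using the gradient estimate of (b); passing the derivative under the integral (after a standard truncation of the time integral near $s=0$) and proving \eqref{DerEst} both reduce to $\int_0^t\int_{\Rd}H(s)^{-1}G_n(s,w-x)\,|q(w)|\,\widetilde{p}(t-s,w,y)\,dw\,ds<\infty$. Splitting at $t/2$: on $(t/2,t)$ the integral is finite and $\le C_nH(t)^{-1}G_n(t,y-x)$ by the relative-Kato bound as in Theorem~\ref{MTh}; on $(0,t/2)$ the inner integral is $\lesssim s^{-1/a}G_n(t,y-x)$ by (c) and $\widetilde{p}(t-s,w,y)\le C_nG_n(t,y-w)$, so the integrand is $\lesssim H(s)^{-1}s^{-1/a}G_n(t,y-x)\lesssim s^{-1/\alpha_1-1/a}G_n(t,y-x)$ (using $H(s)\ge c\,s^{1/\alpha_1}$ from \eqref{eq:doubling_kappa}), which is integrable near $s=0$ precisely because $\tfrac1{\alpha_1}+\tfrac1a<1$. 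The case of large $t$ is reduced to small $t$ via \eqref{CH-K}, and continuity of the partial derivatives follows as in the continuity part of Theorem~\ref{MTh}.

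I expect two main obstacles. The first is input (b): proving the increment and gradient estimates for the unperturbed kernel $p$ with $G_n$-shaped majorants and the stated exponents, uniformly for $t<t_0$. Near the diagonal this follows from the scaling \eqref{eq:doubling_kappa}, but in the tail region $|v|\gtrsim H(t)$, where $p(t,v)\approx t f(|v|)$ up to the polynomial correction built into $G_n$, one must compare $f(|v+h|)$ with $f(|v|)$ uniformly across all scales, which is exactly the role of hypothesis (C). The second, inside the Hölder bootstrap, is to reach the \emph{sharp} exponent $\alpha_1/b$ rather than a slightly smaller one: this needs the weighted version of the relative-Kato estimate of Theorem~\ref{th:VeryIT}, with the weight $(|z-x|/H(s)\wedge1)^{\alpha_1/b}$ incorporated, whose convergence at $s=0$ sits at the borderline $\tfrac1b+\tfrac1a=1$ and is secured by the decay of the $\KatoC_a$ modulus on short scales together with the constraint $a\le2$.
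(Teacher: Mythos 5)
Your outline follows the same Duhamel skeleton as the paper, and inputs (a) and the far-field use of (A) match. But there is a genuine gap at the heart of the Hölder estimate \eqref{Hoelkernel}, and it is exactly the "delicate point" you flag without resolving.

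First, input (b) is misstated: the unperturbed kernel satisfies a \emph{Lipschitz} increment estimate
$|p(t,v+h)-p(t,v)|\le C_n\bigl(\tfrac{|h|}{h(t)}\wedge1\bigr)\bigl(G_n(t,v)+G_n(t,v+h)\bigr)$
(this is Lemma~\ref{lem_DiffEst}, a consequence of Lemma~\ref{pderest}); the exponent $\alpha_1/b$ is not a property of $p$, it is produced downstream by the interaction with the Kato class. Baking the fractional exponent into (b) and then integrating it against the pointwise ``Kato density'' $\lesssim s^{-1/a}$ is what breaks. Concretely, on the range $s>H^{-1}(r)$ with $r=|z-x|$ your step calls for bounding
\begin{equation*}
\int_{H^{-1}(r)}^{t}\Bigl(\frac{r}{H(s)}\Bigr)^{\alpha_1/b}s^{-1/a}\,ds .
\end{equation*}
In the scaling-clean case $\alpha_1=\alpha_2$ one has $H(s)\approx s^{1/\alpha_1}$ and $H^{-1}(r)\approx r^{\alpha_1}$, so the integrand behaves like $r^{\alpha_1/b}s^{-1/b-1/a}=r^{\alpha_1/b}s^{-1}$ and the integral is $r^{\alpha_1/b}\log\bigl(t/r^{\alpha_1}\bigr)$: a logarithmic loss exactly at the claimed sharp exponent. (The near piece $\int_0^{H^{-1}(r)}s^{-1/a}\,ds\approx(H^{-1}(r))^{1/b}\lesssim r^{\alpha_1/b}$ is fine, so the loss is genuinely concentrated at the borderline $1/a+1/b=1$.) The restriction $a\in(1,2]$ likewise does not come from (b): $p$ is Lipschitz, so there is no ``$\alpha_1/2$'' threshold on its modulus of continuity.

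What actually produces the clean exponent in the paper is a Hölder inequality \emph{in the time variable} with conjugate exponents $(a,b)$, Lemma~\ref{Hoelderineq}: keeping the Lipschitz weight $(r/h(t-s)\wedge1)$ and pairing it against $p(s,\cdot)$ via
$\int_0^t(r/h(t-s)\wedge1)\,p(s,y)\,ds\le C\,\Psi(1/r)^{1/a-1}\bigl(\int_0^tp(s,y)^a\,ds\bigr)^{1/a}$,
then invoking Corollary~\ref{CorKato_Eq} (the $L^a$-in-time characterization of $\KatoC_a$) for the spatial integral, and finally using $\Psi(1/r)^{-1/b}\lesssim r^{\alpha_1/b}$ from (B). The constraint $a\le2$ enters here, not in (b): it guarantees $b\ge2>\alpha_2$, which is the integrability condition in Lemma~\ref{integratingh} making $\int_0^t(r/h(s)\wedge1)^b\,ds\lesssim\Psi(1/r)^{-1}$ hold. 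Your plan has no counterpart to this time-Hölder step; the pointwise $s^{-1/a}$ bound plus a dyadic-type split cannot recover it. (There is also no proof sketched for the claimed density bound $\int G_n(s,w-x)|q(w)|G_n(t-s,y-w)\,dw\lesssim s^{-1/a}G_n(t,y-x)$, which would itself need the 4G inequality, Lemma~\ref{Gcomp_p}, and the localization via (A), but even granting it the argument still loses the log.)

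The differentiability half of your outline is closer to being sound, because there $1/\alpha_1+1/a<1$ is a strict inequality rather than a borderline, so a crude $s^{-1/\alpha_1-1/a}$ integrand is integrable near $0$; but the paper still routes this through the time-Hölder inequality and \eqref{DiffCond} to get the clean $H(t)^{-1}G_n(t,y-x)$ majorant, and to justify passing $\partial_{x_i}$ under the integral near $s=t$ via the $L^a$ Kato characterization. You would need to supply that justification as well.
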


\medskip
\noindent
We note that the regularity obtained in Theorem \ref{THE2} seems to be new even in the case of fractional Laplacian.

We now give some applications of our main results. By \eqref{CH-K} and \eqref{tildep_est} of Theorem \ref{MTh} we see that the kernels $\widetilde{p}(t,x,y)$ define
the semigroup of bounded operators on each $L^p(\R^d)$, $1 \leq p \leq \infty$, which are given by
$$
\widetilde{P}_t \varphi(x) = \int_{\R^d}  \widetilde{p}(t,x,y)\varphi(y) dy, \quad t >0.
$$
Let $\beta \in (0,1)$. Denote by $C^{0,\beta}(\R^d)$ the space of $\beta$-H\"older continuous functions on $\R^d$ with its seminorm
$$
\left\|\varphi\right\|_{C^{0,\beta}}:= \sup_{ x,y \in \R^d \atop x \neq y} \frac{|\varphi(x) - \varphi(y)|}{|x-y|^{\beta}}.
$$
Our next result follows directly from Theorem \ref{THE2}.

\begin{cor} \label{cor:reg_sem}
Let (A), (B) and (C) hold. 
Then we have the following statements. 
\begin{itemize}
\item[(1)] Let $\beta \in (0,\alpha_1/2]$. If $q\in\KatoC_a$ for some $a \in [\alpha_1/(\alpha_1-\beta),2]$, then for every $t>0$ and $p \in [1,\infty]$ the operator $\widetilde{P}_t$ maps $L^p(\R^d)$ continuously into $C^{0,\beta}(\R^d)$, and for any $t_0>0$ there exists a constant $C=C(p,t_0)$ such that 
$$
\left\|\widetilde{P}_t\right\|_{L^p \rightarrow C^{0,\beta}} \leq C H(t)^{-d/p-\beta}, \quad t \in (0, t_0).
$$

\item[(2)] If $\alpha_1 >1$, $q\in \KatoC_a$ for some $a>\alpha_1/(\alpha_1 - 1)$ and $\varphi \in L^p(\R^d)$ for some $p \in [1,\infty]$, then for every $t > 0$ the function $\widetilde{P}_t \varphi$ has all partial derivatives everywhere in $\R^d$ and for every $t_0 >0$ there exists a constant $C=C(p,t_0)$ such that
	$$
	   \left\|\nabla \widetilde{P}_t \right\|_{L^p \rightarrow L^{\infty}} \leq C H(t)^{-d/p-1}, \quad  t \in (0,t_0).
	$$ 
\end{itemize}
\end{cor}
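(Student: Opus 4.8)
The plan is to deduce both statements directly from the pointwise bounds \eqref{Hoelkernel} and \eqref{DerEst} in Theorem \ref{THE2} by integrating against $\varphi\in L^p(\R^d)$, so the main work is to bound the relevant integrals of $G_n(t,\cdot)$ uniformly in the base point. For part (1), fix $\beta\in(0,\alpha_1/2]$ and $a\in[\alpha_1/(\alpha_1-\beta),2]$; then $\tfrac{1}{b}=1-\tfrac1a\ge\beta/\alpha_1$, so $\alpha_1/b\ge\beta$, and since $|z-x|/H(t)\wedge1\le1$ the factor in \eqref{Hoelkernel} is bounded by $(|z-x|/H(t)\wedge1)^{\beta}\le(|z-x|/H(t))^{\beta}$. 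Hence
\[
  |\widetilde P_t\varphi(z)-\widetilde P_t\varphi(x)|
   \le C\Big(\tfrac{|z-x|}{H(t)}\Big)^{\beta}\int_{\R^d}\big(G_n(t,y-z)+G_n(t,y-x)\big)\,|\varphi(y)|\,dy,
\]
so it suffices to show $\|G_n(t,\cdot)\|_{L^{p'}}\le C\,H(t)^{-d/p}$ (with $1/p+1/p'=1$), which after the substitution $y=H(t)w$ reduces to the finiteness of $\int_{\R^d}\big(\min\{1,H(t)^dtf(H(t)|w|/4)\}+(1+|w|)^{-n}\big)^{p'}dw$ uniformly for $t\in(0,t_0)$; the second term is integrable once $n$ is taken large enough (we are free to enlarge $n$, adjusting $C=C(n)$), and for the first term one uses $H(t)^d=1/(t\,f(H(t)))$ together with (B) to control $H(t)^dtf(H(t)|w|/4)=f(H(t)|w|/4)/f(H(t))$ by a power of $|w|$ on the relevant range, giving an integrable majorant. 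Dividing by $|z-x|^{\beta}$ and taking the supremum over $z\neq x$ yields $\|\widetilde P_t\varphi\|_{C^{0,\beta}}\le C\,H(t)^{-d/p-\beta}\|\varphi\|_{L^p}$; combined with the trivial bound $\|\widetilde P_t\varphi\|_{L^\infty}\le\|G_n(t,\cdot)\|_{L^{p'}}\|\varphi\|_{L^p}\le C\,H(t)^{-d/p}\|\varphi\|_{L^p}$ (from \eqref{tildep_est} and the same $G_n$ integral, or more crudely from \eqref{tildep_est} and known estimates of $p$), this gives continuity of $\widetilde P_t\colon L^p\to C^{0,\beta}$.

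For part (2) the argument is the same but shorter: under $\alpha_1>1$ and $a>\alpha_1/(\alpha_1-1)$, Theorem \ref{THE2} gives $|\partial_{x_i}\widetilde p(t,x,y)|\le C\,H(t)^{-1}G_n(t,y-x)$ pointwise, so differentiating under the integral sign (justified by this very bound, which is locally uniform in $x$, and dominated convergence) yields $\partial_{x_i}\widetilde P_t\varphi(x)=\int\partial_{x_i}\widetilde p(t,x,y)\varphi(y)\,dy$, whence $|\partial_{x_i}\widetilde P_t\varphi(x)|\le C\,H(t)^{-1}\|G_n(t,\cdot)\|_{L^{p'}}\|\varphi\|_{L^p}\le C\,H(t)^{-1-d/p}\|\varphi\|_{L^p}$ by the same $L^{p'}$ estimate on $G_n$ as above.

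The only genuine point requiring care—and the step I expect to be the main obstacle—is the uniform-in-$t$ bound $\|G_n(t,\cdot)\|_{L^{p'}(\R^d)}\le C\,H(t)^{-d/p}$ for $t\in(0,t_0)$, i.e.\ verifying that after rescaling by $H(t)$ the profile $f(H(t)|w|/4)/f(H(t))$ stays below an $L^{p'}$-integrable function of $w$ independent of $t$; this is exactly where assumption (B) (and the doubling \eqref{eq:doubling_kappa}, including the behaviour near $|w|=0$ where $s^df(s)$ is decreasing) enters, and where one must be slightly attentive because $H(t)|w|/4$ ranges over all of $(0,\infty)$ as $w$ does while (B) is stated only on $(0,2]$—so the region $H(t)|w|/4>2$ has to be absorbed into the polynomially decaying tail term $(1+|w|)^{-n}$ of $G_n$ rather than estimated via (B). Everything else is routine: interchange of differentiation and integration, Hölder's inequality, and monotonicity/continuity of $H$.
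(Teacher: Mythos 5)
Your overall strategy — feed the pointwise bounds \eqref{Hoelkernel} and \eqref{DerEst} of Theorem \ref{THE2} into the integral representation of $\widetilde P_t$ and control $\sup_w\int_{\R^d} G_n(t,w-z)|\varphi(z)|\,dz$ — is the same as the paper's, and part~(2) is handled in the same brief way. The mechanism for the $G_n$--integral, however, differs. The paper never computes $\|G_n(t,\cdot)\|_{L^{p'}}$: using \eqref{eq:aux_def_Gn} together with \eqref{eq:ptx_est} and \eqref{h_and_H} it dominates the ``min'' piece of $G_n$ by $c\,p(t,(w-z)/4)$ and then applies the weighted H\"older estimate
\[
\int_{\R^d} p\big(t,\tfrac{w-z}{4}\big)|\varphi(z)|\,dz
\le \Big(\int p\big(t,\tfrac{w-z}{4}\big)\,dz\Big)^{1/p'}\Big(\int p\big(t,\tfrac{w-z}{4}\big)|\varphi(z)|^p\,dz\Big)^{1/p}
\le c\,h(t)^{-d/p}\|\varphi\|_p,
\]
exploiting that $p(t,\cdot)$ is a probability density bounded by $ch(t)^{-d}$, while the polynomial tail of $G_n$ is treated by an ordinary $L^{p'}$ computation with $n=(d+1)/p'$. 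This sidesteps the uniform-in-$t$ rescaling of $G_n$ that your route requires.

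Your direct estimate of $\|G_n(t,\cdot)\|_{L^{p'}}$ is also viable, but the way you propose to treat the region $\{H(t)|w|/4>2\}$ is not correct. Since $G_n$ is a \emph{sum} of the min-piece and $H(t)^{-d}(1+|w|/H(t))^{-n}$, the polynomial tail cannot ``absorb'' the min-piece there, and in general it does not: for $f(r)=r^{-d-\alpha}$ the rescaled min-piece behaves like $|w|^{-d-\alpha}$, which for $n$ large decays more slowly than $(1+|w|)^{-n}$, so the min-piece dominates at infinity and must be bounded on its own. The correct tool on that region is not~(B) but the L\'evy integrability of $f$. Substituting $y=H(t)w/4$ and using $tH(t)^d=1/f(H(t))$ and $f(|y|)^{p'}\le f(2)^{p'-1}f(|y|)$, $f(H(t))\ge f(2)$, one gets for $1\le p'<\infty$
\[
\int_{|w|>8/H(t)}\Big(tH(t)^d f\big(\tfrac{H(t)|w|}{4}\big)\Big)^{p'}dw
= \frac{4^d\,t}{f(H(t))^{p'-1}}\int_{|y|>2}f(|y|)^{p'}\,dy
\le 4^d\,t_0\int_{|y|>2}f(|y|)\,dy<\infty,
\]
with the endpoint $p'=\infty$ being trivial from $G_n(t,\cdot)\le 2H(t)^{-d}$. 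With this repair (and a compactness argument for those $t\in(0,t_0)$ where $H$ is only a continuous extension of $g^{-1}(1/\cdot)$), your uniform bound $\|G_n(t,\cdot)\|_{L^{p'}}\le C H(t)^{-d/p}$ does hold, and the rest of your argument for both parts goes through.
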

We remark in passing that it also follows from \eqref{tildep_est} that all $\widetilde{P}_t$'s are bounded operators from $L^p(\R^d)$ to $L^q(\R^d)$, for every $1 \leq p < q \leq \infty$. In combination with Corollary \ref{cor:reg_sem}(1), it immediately implies that for every $t>0$ and $1 \leq p \leq q \leq \infty$ the operator $\widetilde{P}_t$ maps $L^p(\R^d)$ continuously into the Banach space $C^{0,\beta}(\R^d)\cap L^q(\R^d)$ (with the norm $\left\|\varphi\right\|_{C^{0,\beta} \cap L^q} := \left\|\varphi\right\|_{C^{0,\beta}} + \left\|\varphi\right\|_{L^q}$), whenever $q\in\KatoC_a$ for some $a \geq \alpha_1/(\alpha_1-\beta)$.

 We refer the reader to \cite[Theorem B.3.5]{Simon1982} and \cite[Theorem 11.7]{LiebLoss2001} for results on H\"older smoothing properties for classical Schr\"odinger semigroups on bounded domains in $\R^d$. The global results for magnetic Schr\"odinger semigroups have been obtained just recently in \cite{FurstGuneysu2021}. 

We use c, C, M (with subscripts) to denote finite positive constants which depend only on $f$, $\nu$, and the dimension $d$. Any additional dependence is explicitly indicated by writing, e.g., $c =c(n).$ 
We write $f(x) \approx g(x)$ to indicate that there is a constant $c$ such that 
$c^{-1}f(x)\leq g(x) \leq c f(x).$

\section{Preliminaries}

There exists a corresponding to $\gener$ probabilistic convolution semigroup of measures $\{\mu_t\}_{t\geq 0}$ on $\Rd$, 
such that
$$
  \Fourier{\mu_t}(u) = \int_{\Rd} e^{i\scalp{u}{y}}\,\mu_t(dy) = e^{-t\Phi(u)},\quad t\geq 0, u\in\Rd,
$$
where 
\begin{equation*} 
  \Phi(u) =   \int \left(1-\cos(\scalp{u}{y}) \right)\nu(y)\, dy, \quad u\in\Rd.
\end{equation*}
The strongly continuous operator semigroup $\{P_t,\, t\geq 0\}$ corresponding to $\{\mu_t,\, t\geq 0\}$ is given by
$P_t\varphi (x) = \int \varphi(x+y)\,\mu_t(dy),$ where $\varphi\in C_\infty(\Rd).$  
The generator $\tilde\gener$ of this semigroup is,
as usually, defined as the strong limit 
$$
  \tilde{\gener} \varphi (x) = \lim_{t\to 0^+} \frac{P_t\varphi(x) - \varphi(x)}{t},
$$
for all $\varphi\in \domgen = \{\varphi\in C_\infty(\Rd):\: \lim_{t\to 0^+} \frac{P_t\varphi(x) - \varphi(x)}{t} \text{  exists uniformly in $x\in\Rd$.}\}.$
We have $C^2_c(\Rd)\subset \domgen$ and
$\tilde{\gener}\varphi=\gener\varphi$ for all $\varphi\in C^2_c(\Rd)$ (see, e.g., \cite{BoettcherSchillingWang2013}).

It follows from (B) that there exists a constant $c$ such that
\begin{equation}\label{belowPhi}
  \Phi(u) \geq (1-\cos 1) \int_{|y|\leq \frac{1}{|u|}}  |\scalp{u}{y}|^2 \nu(y)\,dy \geq c |u|^{\alpha_1},\quad |u|\geq 1, 
\end{equation}
hence for every $t>0$ there exists a smooth transition density $p(t,x),$ such that $\mu_t(dx)=p(t,x)\,dx.$ Furtheremore $p(t,x,y)=p(t,y-x)$ is a heat kernel for $\gener$.

Let
$$
  \Psi(r)=\sup_{|\xi|\leq r} \Phi(\xi),\quad r>0.
$$
It follows from \cite[Lemma 6]{Grzywny2014} (or \cite[Proposition 1]{KaletaSztonyk2015})  
that
\begin{equation}\label{Pineq}
  \Psi(r) \approx \int_{\Rd} \left( 1\wedge |ru|^2 \right) \nu(u)\, du,\quad r>0.
\end{equation}
This yields $\Psi(r) \geq c r^2 \int_0^{\tfrac{1}{r}} s^{d+1}f(s)\, ds $, for $r>0,$ and if 
  $\int_{\Rd} |x|^2 \nu(x)\, dx < \infty,$ then 
	$\Psi(r) \approx r^2 $, for $ r\leq 1$.
	
	We note that $\Psi$ is continuous and non-decreasing and $\sup_{r>0} \Psi(r)=\infty$ (see \eqref{belowPhi}).
Let 
$$
  \Psi_-(s)=\sup\{r>0: \Psi(r)=s\} \quad \text{for} \ s\in (0,\infty),
$$ 
so that $\Psi(\Psi_-(s))=s$ for $s\in (0,\infty)$ and $\Psi_-(\Psi(s))\geq s$ for $s>0$. It is 
also easy to see that 
\begin{equation}\label{OnPsi_}
  \text{if   }  r<\Psi(s) \quad  \text{  then   } \quad  \Psi_-(r) \leq s, \quad r,s>0. 
\end{equation}	
	To shorten the notation below, we set 
\begin{equation*}
h(t):= \frac{1}{\Psi_-\left(\frac{1}{t}\right)}.
\end{equation*}
Notice that the both functions $\Psi_-$ and $h$ are increasing on $(0,\infty)$. 

	 In the following four lemmas we do use the assumption (B) without using
	$\alpha_2<d,$ so in fact they hold also for $d=1$ and $\alpha_2>1.$ 

\begin{lem}\label{lem:doubling_h} If $(B)$ holds then there exist constants $ C_1,C_2,$ such that
  \begin{equation}\label{asympt_Psi}
	  C_1 r^{-d} f\left(\tfrac{1}{r}\right) \leq \Psi(r) \leq C_2 r^{-d} f\left(\tfrac{1}{r}\right),\quad  \quad r\in \left[\tfrac{1}{2},\infty\right). 
	\end{equation}
	Furthermore, for every $T>0$ we have
	\begin{equation}\label{h_and_H}
	  h(t) \approx H(t), \quad t\in (0,T),
	\end{equation}
  and there exist $C_3,C_4$ such that
  \begin{equation}\label{eq:doubling_h}
    C_3 \left(\frac{R}{r}\right)^{\frac{1}{\alpha_2}} 
		\leq \frac{h(R)}{h(r)} 
		\leq C_4 \left(\frac{R}{r}\right)^{\frac{1}{\alpha_1}}, \quad T \geq R\geq r > 0. 
	\end{equation}
\end{lem}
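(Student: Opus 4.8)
The plan is to reduce everything to the two-sided estimate \eqref{asympt_Psi} on $\Psi$ at infinity, after which \eqref{h_and_H} and \eqref{eq:doubling_h} become statements about a generalized inverse of a function with a weak scaling property. Set $g(s)=s^{d}f(s)$; then \eqref{asympt_Psi} says precisely $\Psi(r)\approx g(1/r)$ for $r\ge 1/2$, and from \eqref{eq:doubling_kappa} the (decreasing) function $g$ obeys $C_{4}(R/r)^{\alpha_{1}}\le g(r)/g(R)\le C_{5}(R/r)^{\alpha_{2}}$ for $2\ge R\ge r>0$.

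To prove \eqref{asympt_Psi} I would use \eqref{Pineq} together with $\nu(u)\approx f(|u|)$ to get $\Psi(r)\approx\int_{\Rd}(1\wedge|ru|^{2})f(|u|)\,du$, and pass to polar coordinates, rewriting the right-hand side up to constants as $r^{2}\int_{0}^{1/r}s^{d+1}f(s)\,ds+\int_{1/r}^{\infty}s^{d-1}f(s)\,ds$. Fix $r\ge 1/2$, so $1/r\le 2$. For the upper bound I would bound $f(s)\le C_{5}(rs)^{-(d+\alpha_{2})}f(1/r)$ on $(0,1/r]$, which turns the first integral into $\tfrac{C_{5}}{2-\alpha_{2}}r^{-d}f(1/r)$ (the only role of an upper exponent here is $\alpha_{2}<2$, which makes $\int_{0}^{1/r}s^{1-\alpha_{2}}\,ds$ finite; $\alpha_{2}<d$ is not used), bound $f(s)\le C_{4}^{-1}(rs)^{-(d+\alpha_{1})}f(1/r)$ on $[1/r,2]$ (contributing $\le\tfrac{C_{4}^{-1}}{\alpha_{1}}r^{-d}f(1/r)$), and note that the leftover tail $\int_{2}^{\infty}s^{d-1}f(s)\,ds$ is a finite constant because $\nu$ is a L\'evy measure, while \eqref{eq:doubling_kappa} applied with $R=2$ gives $r^{-d}f(1/r)\ge c\,r^{\alpha_{1}}\ge c\,2^{-\alpha_{1}}$ on $[1/2,\infty)$, so that tail too is dominated by $r^{-d}f(1/r)$. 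For the lower bound it suffices to keep only the annulus $\{1/(2r)\le|u|\le 1/r\}$, on which $1\wedge|ru|^{2}\ge 1/4$ and $f(|u|)\ge f(1/r)$; integrating gives $\Psi(r)\ge c\,r^{-d}f(1/r)$.

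With \eqref{asympt_Psi} at hand, since $g$ is decreasing its weak scaling transfers to $g^{-1}$, which then obeys $g^{-1}(\lambda s)\approx g^{-1}(s)$ for every fixed $\lambda>0$ (with the comparison constant depending on $\lambda$). From $\Psi(\Psi_{-}(s))=s$, valid for $s\ge\Psi(1/2)$ where also $\Psi_{-}(s)\ge 1/2$ (using $\Psi_{-}(\Psi(1/2))\ge 1/2$ and monotonicity), one reads off $s\approx g(1/\Psi_{-}(s))$, hence $1/\Psi_{-}(s)\approx g^{-1}(s)$, i.e. $h(t)\approx g^{-1}(1/t)=H(t)$ for all sufficiently small $t$; as $h$ and $H$ are positive and increasing, hence bounded above and below by positive constants on any $[t_{1},T]$, this yields \eqref{h_and_H} (the constant possibly depending on $T$). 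For \eqref{eq:doubling_h}, the comparison $\Psi(r)\approx g(1/r)$ and the weak scaling of $g$ give $c(R/r)^{\alpha_{1}}\le\Psi(R)/\Psi(r)\le C(R/r)^{\alpha_{2}}$ for $R\ge r\ge 1/2$; inverting this (again via continuity, monotonicity and $\Psi(\Psi_{-}(s))=s$) gives $c'(s'/s)^{1/\alpha_{2}}\le\Psi_{-}(s')/\Psi_{-}(s)\le C'(s'/s)^{1/\alpha_{1}}$ for $s'\ge s\ge\Psi(1/2)$, and rewriting through $h(t)=1/\Psi_{-}(1/t)$ is exactly \eqref{eq:doubling_h} for $0<r\le R\le 1/\Psi(1/2)$; the remaining range $T\ge R\ge r>0$ follows by splitting off the part of $[r,R]$ above $1/\Psi(1/2)$ and using monotonicity and boundedness of $h$ on compact subintervals of $(0,\infty)$.

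The main obstacle is establishing \eqref{asympt_Psi}: one must see that the inner integral (finite thanks to $\alpha_{2}<2$), the middle piece, and the fixed far tail are each dominated by $r^{-d}f(1/r)$ uniformly over $r\ge 1/2$ — the tail requiring the polynomial lower bound on $r^{-d}f(1/r)$ coming from \eqref{eq:doubling_kappa}. Everything after \eqref{asympt_Psi} is the routine, if slightly tedious, manipulation of a generalized inverse of a weakly scaling function.
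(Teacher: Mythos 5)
Your argument is correct. The only genuine difference from the paper is at the first step: the paper disposes of \eqref{asympt_Psi} by citing \cite[Lemma~4.4]{Sztonyk2017}, whereas you rederive it from scratch from \eqref{Pineq} by passing to polar coordinates, splitting the integral at $1/r$ and at $2$, and invoking \eqref{eq:doubling_kappa} separately on each piece (plus the polynomial lower bound $r^{-d}f(1/r)\gtrsim r^{\alpha_1}$ to absorb the fixed far tail $\int_{2}^{\infty}s^{d-1}f(s)\,ds$). That computation is sound and makes the lemma self-contained, at the modest cost of a couple of extra lines; it also makes explicit (and uses correctly) that only $\alpha_2<2$, not $\alpha_2<d$, enters. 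For \eqref{h_and_H} and \eqref{eq:doubling_h} the route is essentially the same as the paper's: you turn the weak scaling of $g(s)=s^{d}f(s)$ into weak scaling of its inverse (you phrase it as scaling of $\Psi$ and hence of $\Psi_{-}$, the paper substitutes $a=H(r)$, $b=H(R)$ directly into the $g$-scaling and then sandwiches $\Psi_{-}(s)$ between $1/g^{-1}(s/C_2)$ and $1/g^{-1}(s/C_1)$), and then extend from the natural small-argument range to all of $(0,T]$ by monotonicity and boundedness on compact subintervals. The two presentations are interchangeable.
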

\begin{proof}
  \eqref{asympt_Psi} follows directly from \cite[Lemma 4.4]{Sztonyk2017}. We need to prove 
	\eqref{h_and_H} and \eqref{eq:doubling_h}. Let $g(s)=s^d f(s)$, $s\in (0,2]$.
  It follows from \eqref{eq:doubling_kappa} that
	$$
	   c_1\left(\frac{a}{b}\right)^{\alpha_2} \leq \frac{g(b)}{g(a)} \leq 
		 c_2 \left(\frac{a}{b}\right)^{\alpha_1}, \quad 2 \geq b \geq a >0.
	$$
	Taking $b=H(R)=g^{-1}(1/R)$ and $a=H(r)=g^{-1}(1/r)$ 
	in the above inequality we obtain
	\begin{equation}\label{eq:doubling_g}
	  c_1^{\frac{1}{\alpha_2}} \left(\frac{R}{r}\right)^{\frac{1}{\alpha_2}} 
		\leq \frac{H(R)}{H(r)}
		\leq c_2^{\frac{1}{\alpha_1}} \left(\frac{R}{r}\right)^{\frac{1}{\alpha_1}}, \quad r\leq R \leq \frac{1}{g(2)}.
	\end{equation}
	From \eqref{asympt_Psi} we get
  $$
	  \frac{1}{g^{-1}(s/C_2)} \leq \Psi_-(s) \leq \frac{1}{g^{-1}(s/C_1)},\quad 
		s \geq \max\{\Psi(1/2),C_2g(2)\},
	$$
	and since $h(t)=\frac{1}{\Psi_-(1/t)}$, the lemma follows
	from this, \eqref{eq:doubling_g} and the monotonicity of $h$ and $H$.
\end{proof}

\begin{lem}\label{integratingh} Let (B) hold and $a\in\R$, $b\geq 0$. 
  If $a-\frac{b}{\alpha_1}+1>0$
  then for every $T>0$ there exists $C_1=C_1(a,b,T)$ such that
  \begin{equation}\label{IL1}
	  \int_0^r s^a h(s)^{-b}\, ds \leq C_1 r^{a+1}h(r)^{-b}, \quad r\in (0,T),
	\end{equation}
	and if $a-\frac{b}{\alpha_2}+1<0$ then there exists $C_2=C_2(a,b)$ such that
	\begin{equation}\label{IU1}
	  \int_r^T s^a h(s)^{-b}\, ds \leq C_2 r^{a+1}h(r)^{-b}, \quad r\in (0,T).
	\end{equation}
	Furthermore,

	\begin{equation}\label{IL2}
	  \int_0^{1/\Psi(1/s)} u^a h(u)^{-b}\, du \leq C_1 \Psi(1/s)^{-a-1}s^{-b}, \quad s \in (0,h(T)),
	\end{equation}
	if $a-\frac{b}{\alpha_1}+1>0$ and
	\begin{equation}\label{IU2}
	  \int_{1/\Psi(1/s)}^T u^a h(u)^{-b}\, du \leq C_2 \Psi(1/s)^{-a-1}s^{-b}, \quad s\in (0,h(T)),
	\end{equation}
	if $a-\frac{b}{\alpha_2}+1<0.$
\end{lem}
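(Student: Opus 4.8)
The plan is to obtain all four bounds from the scaling property \eqref{eq:doubling_h} of $h$ combined with elementary integrals of powers, and then to reduce \eqref{IL2}--\eqref{IU2} to \eqref{IL1}--\eqref{IU1} via the choice $r=1/\Psi(1/s)$.

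First I would prove \eqref{IL1}. Fix $r\in(0,T)$. For $0<s<r$, \eqref{eq:doubling_h} gives $h(r)/h(s)\leq C_4(r/s)^{1/\alpha_1}$, hence $h(s)^{-b}\leq C_4^b (r/s)^{b/\alpha_1}h(r)^{-b}$ since $b\geq 0$. Therefore
\begin{equation*}
  \int_0^r s^a h(s)^{-b}\,ds \leq C_4^b\, r^{b/\alpha_1}h(r)^{-b}\int_0^r s^{a-b/\alpha_1}\,ds,
\end{equation*}
and the last integral is finite exactly because $a-b/\alpha_1+1>0$; evaluating it gives \eqref{IL1}. The proof of \eqref{IU1} is symmetric: for $r<s<T$, \eqref{eq:doubling_h} gives $h(s)/h(r)\geq C_3(s/r)^{1/\alpha_2}$, so $h(s)^{-b}\leq C_3^{-b}(s/r)^{-b/\alpha_2}h(r)^{-b}$, and
\begin{equation*}
  \int_r^T s^a h(s)^{-b}\,ds \leq C_3^{-b}\, r^{b/\alpha_2}h(r)^{-b}\int_r^\infty s^{a-b/\alpha_2}\,ds,
\end{equation*}
the integral over $(r,\infty)$ converging because $a-b/\alpha_2+1<0$; this bound does not involve $T$, which yields \eqref{IU1}.

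For \eqref{IL2} and \eqref{IU2} the key auxiliary fact is the comparison
\begin{equation*}
  \frac{s}{C_4}\leq h\!\left(\tfrac{1}{\Psi(1/s)}\right)\leq s, \qquad s\in(0,h(T)),
\end{equation*}
together with $1/\Psi(1/s)<T$ for such $s$. The latter holds because $s<h(T)$ means $\Psi_-(1/T)<1/s$, whence $\Psi(1/s)>1/T$. For the comparison, write $r=1/\Psi(1/s)$, so $1/r=\Psi(1/s)$; then $\Psi_-(1/r)=\Psi_-(\Psi(1/s))\geq 1/s$, so $h(r)=1/\Psi_-(1/r)\leq s$, which is the upper estimate. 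For the lower one, take any $t\in(r,T)$: then $1/t<\Psi(1/s)$, so by \eqref{OnPsi_} $\Psi_-(1/t)\leq 1/s$, i.e. $h(t)\geq s$, while \eqref{eq:doubling_h} gives $h(t)\leq C_4(t/r)^{1/\alpha_1}h(r)$; letting $t\to r^+$ yields $s\leq C_4\,h(r)$. Granting this, \eqref{IL2} follows by applying \eqref{IL1} at the point $r=1/\Psi(1/s)\in(0,T)$, using $r^{a+1}=\Psi(1/s)^{-a-1}$ and replacing $h(1/\Psi(1/s))^{-b}$ by $C_4^b s^{-b}$ (again $b\geq 0$); \eqref{IU2} follows in exactly the same way from \eqref{IU1}.

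The routine part is the two power-function integrals; the only step needing a little care is the comparison $h(1/\Psi(1/s))\approx s$ — essentially the assertion that $\Psi$ is not flat over too wide a multiplicative range — but, as indicated, it follows from \eqref{OnPsi_} and the scaling \eqref{eq:doubling_h} alone, without recourse to the sharper estimates \eqref{Pineq} or \eqref{asympt_Psi}.
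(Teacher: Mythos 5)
Your proof is correct and takes essentially the same route as the paper: the first half is identical (dominate $h(s)^{-b}$ by $h(r)^{-b}$ times a power of $s/r$ using \eqref{eq:doubling_h}, then integrate powers), and the second half reduces \eqref{IL2}--\eqref{IU2} to \eqref{IL1}--\eqref{IU1} via the substitution $r = 1/\Psi(1/s)$ together with the monotonicity fact \eqref{OnPsi_}. The only presentational difference is that you establish the explicit comparison $s/C_4 \leq h(1/\Psi(1/s)) \leq s$ (only the lower bound is actually needed, since $b\geq 0$) and evaluate at $r = 1/\Psi(1/s)$ exactly, whereas the paper evaluates at the shifted point $r = 1/\Psi(1/s) + \epsilon$ so that \eqref{OnPsi_} applies directly and then lets $\epsilon\to 0$ implicitly; both are valid and equivalent in substance.
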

\begin{proof}
  From Lemma \ref{lem:doubling_h} we get
  \begin{align*}
	  \int_0^r s^a h(s)^{-b}\, ds
		& \leq  c_1^b h(r)^{-b} r^{\frac{b}{\alpha_1}} \int_0^r s^{a-\frac{b}{\alpha_1}}\, ds 
		  \leq  c_1^b h(r)^{-b} r^{\frac{b}{\alpha_1}} 
		         \tfrac{1}{a-\frac{b}{\alpha_1}+1} r^{a-\tfrac{b}{\alpha_1}+1} \\
		&  =    c_1^b \tfrac{1}{a-\frac{b}{\alpha_1}+1} r^{a+1} h(r)^{-b},  
	\end{align*}
	and the first inequality follows. Similarly, Lemma \ref{lem:doubling_h} yields also
	the second inequality
	\begin{align*}
	  \int_r^T s^a h(s)^{-b}\, ds
		& \leq  c_2^{-b} h(r)^{-b} r^{\frac{b}{\alpha_2}} \int_r^T s^{a-\frac{b}{\alpha_2}}\, ds \\
		& \leq  c_2^{-b} \tfrac{1}{\frac{b}{\alpha_2}-a-1} r^{\frac{b}{\alpha_2}} h(r)^{-b} r^{a-\frac{b}{\alpha_2}+1}.
	\end{align*}
	Using \eqref{IL1} and \eqref{IU1} with $r=1/\Psi(1/s)+\epsilon,$
	we obtain upper estimates of both integrals by 
	$C_i (1/\Psi(1/s)+\epsilon)^{a+1}h(1/\Psi(1/s)+\epsilon)^{-b},$ $i=1,2.$
	Then, using \eqref{OnPsi_}, we get
	$$
	  \frac{1}{h\left(\tfrac{1}{\Psi(1/s)}+\epsilon\right)} = \Psi_-\left(\tfrac{1}{1/\Psi(1/s)+\epsilon}\right) \leq 1/s,
	$$
	for every $\epsilon$ sufficiently small and \eqref{IL2} and \eqref{IU2} follow.
\end{proof}

Recall that by $p(t,x)$ we denote the density of $\mu_t$ for $t>0$. 
The sharp estimates of $p$ in small times are given in \cite{KaletaSztonyk2017}. We
quote them in the following Lemma. 
\begin{lem}\label{p_t_est} 
  If (A) and (B) hold then for every $t_0>0$ there exist $C_1,C_2,C_3,C_4$ such that
	\begin{equation}\label{est_p_small_x}
	  C_1 h(t)^{-d} \leq p(t,x) \leq C_2 h(t)^{-d},\quad |x| \leq h(t), t\leq t_0,
	\end{equation}
	and
	\begin{equation}\label{est_p_large_x}
    C_3 tf(|x|) \leq p(t,x) \leq C_4 tf(|x|),\quad |x| \geq h(t), t\leq t_0.
  \end{equation}
  There exist constants $C_5,C_6$ such that
  \begin{equation}\label{eq:ptx_est}
    C_5 \left( h(t)^{-d} \wedge tf(|x|)\right) \leq p(t,x) \leq C_6 \left(h(t)^{-d} \wedge tf(|x|)\right),
  \end{equation}
  for $t\in (0,t_0), x\in\Rd$.
\end{lem}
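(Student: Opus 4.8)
These sharp estimates are established in \cite{KaletaSztonyk2017}; the strategy one would follow splits according to whether $x$ lies in the near-diagonal region $|x|\le h(t)$ or in the off-diagonal region $|x|\ge h(t)$, the combined bound \eqref{eq:ptx_est} being recovered afterwards by gluing. The gluing works because at the interface $|x|\approx h(t)$ one has $h(t)^{-d}\approx t f(h(t))$: indeed $h(t)=1/\Psi_-(1/t)$ gives $t\,\Psi(1/h(t))\approx 1$, and for $t$ small (so $1/h(t)\ge 1/2$) \eqref{asympt_Psi} yields $\Psi(1/h(t))\approx h(t)^{d}f(h(t))$. Consequently, for $|x|\le h(t)$ one has $tf(|x|)\ge tf(h(t))\approx h(t)^{-d}$, so $h(t)^{-d}\wedge tf(|x|)\approx h(t)^{-d}\approx p(t,x)$ by \eqref{est_p_small_x}, while for $|x|\ge h(t)$ the reverse inequality gives $h(t)^{-d}\wedge tf(|x|)\approx tf(|x|)\approx p(t,x)$ by \eqref{est_p_large_x}; the doubling of $h$ from Lemma \ref{lem:doubling_h} lets one pass from ``$t$ small'' to arbitrary $t\le t_0$.

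For \eqref{est_p_small_x} I would argue on the Fourier side, using $p(t,x)=(2\pi)^{-d}\int_{\Rd}\cos(\scalp{x}{u})\,e^{-t\Phi(u)}\,du$ (real by symmetry of $\nu$). Since $\nu$ has a radial decreasing profile, $\Phi(u)$ is comparable to the radial function $\Psi(|u|)$ (a standard fact; cf.\ \eqref{Pineq} and \cite{Grzywny2014}), and combining this with $\Phi(u)\ge c|u|^{\alpha_1}$ from \eqref{belowPhi} and the scaling of $\Psi$ in Lemma \ref{lem:doubling_h}, the substitution $u\mapsto u/h(t)$ gives $\int_{\Rd}e^{-t\Phi(u)}\,du\approx h(t)^{-d}$; this already yields the upper bound $p(t,x)\le p(t,0)\le C_2 h(t)^{-d}$. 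The matching lower bound on $|x|\le h(t)$ is more delicate but standard for such densities: from $t\Phi(u)\le t\Psi(1/h(t))\approx 1$ on $|u|\le 1/h(t)$ one gets $p(t,0)\ge c\,h(t)^{-d}$, and then one extends this to all of $B(0,h(t))$ either through the oscillation estimate $|p(t,x)-p(t,0)|\le C(|x|/h(t))^2 h(t)^{-d}$ (which already covers $|x|\le\delta h(t)$) followed by a chaining argument over the convolution semigroup, or directly from the probabilistic fact that the corresponding process stays in $B(0,h(t))$ up to time $t$ with probability bounded below.

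For \eqref{est_p_large_x} the lower bound is the ``single large jump'' estimate: restricting to the trajectories that make exactly one jump, of size close to $x$, while the remaining part stays in $B(0,|x|/2)$ (an event of probability $\gtrsim t\,\nu(x)$ when $|x|\ge h(t)$, using the already established bound for the truncated semigroup) gives $p(t,x)\gtrsim t f(|x|)$. The upper bound is the heart of the matter and the only place assumption (A) enters. One splits $\nu=\nu_0+\nu_1$ with $\nu_1(dy)=\nu(y)\ind{\{|y|>1\}}\,dy$ carrying the large jumps; the $\nu_0$-part of the process is concentrated at scale $\lesssim h(t)$ and hence contributes only $\lesssim h(t)^{-d}$ to the density (by the near-diagonal analysis), while the compound Poisson part driven by $\nu_1$ has density $\sum_{k\ge 1}\frac{t^k}{k!}e^{-t\|\nu_1\|}\,\nu_1^{*k}$. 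Assumption (A) says precisely that $\nu_1*\nu_1(x)\le C_3 f(|x|)$ for $|x|\ge 2$; iterating this — with (B) used to control the region $|x|<2$ and to absorb the powers of $\|\nu_1\|$ — gives $\nu_1^{*k}(x)\le C^{k}f(|x|)$, and summing the series produces a contribution $\lesssim tf(|x|)$ for $t\le t_0$. Convolving the $\nu_0$-part against the $\nu_1$-part and keeping track of the two competing bounds $h(t)^{-d}$ and $tf(|x|)$ then yields \eqref{est_p_large_x}.

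The main obstacle is this off-diagonal upper bound: producing the clean linear-in-$t$ tail $tf(|x|)$, with no extra polynomial factor and no blow-up of the perturbation series, hinges on assumption (A) being stable under iterated convolution, which is a genuinely non-trivial input; one must also check that the rapidly decaying ``core'' generated by the small jumps $\nu_0$ does not spoil the heavy tail, and this is exactly where the interplay of $h(t)$, $\Psi$ and $f$ recorded in Lemmas \ref{lem:doubling_h} and \ref{integratingh} is needed.
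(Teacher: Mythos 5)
Your proposal is correct and follows essentially the same route as the paper: \eqref{est_p_small_x} and \eqref{est_p_large_x} are imported from \cite{KaletaSztonyk2017}, and \eqref{eq:ptx_est} is obtained by gluing the two regimes through the interface relation $f(h(t))\approx t^{-1}h(t)^{-d}$, which the paper likewise extracts from \eqref{asympt_Psi} at $r=1/h(t)$. The paper's actual proof is shorter because it does not re-derive the two-regime estimates you sketch (it simply cites them, together with a global bound $p(t,x)\leq c\,h(t)^{-d}$) and then performs exactly the gluing step your first paragraph carries out.
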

\begin{proof}
  It follows from  the proof of  \cite[Theorem 1]{KaletaSztonyk2017} and \eqref{asympt_Psi}
that \eqref{est_p_small_x} and \eqref{est_p_large_x} hold.
The both estimates yield that $p(t,x)\geq (C_1 \wedge C_3) (h(t)^{-d} \wedge tf(|x|)).$
It follows also from \cite[Lemma 5]{KaletaSztonyk2017} and \cite[Lemma 6]{KaletaSztonyk2015} that $p(t,x)\leq c_1 h(t)^{-d}$
for all $x\in\Rd$ and $t\leq t_0,$ hence $p(t,x) \leq (c_1 \vee C_4) (h(t)^{-d} \wedge tf(|x|))$
for $|x| \geq h(t),$ $t\leq t_0.$ For $|x| \leq h(t)$ and $t\leq t_0$ we note that
\eqref{asympt_Psi} with $r=1/h(t)$ yields in particular that
	\begin{equation}\label{f(h)}
	  f(h(t)) \approx t^{-1}(h(t))^{-d}, \quad t\in (0,\Psi(1/2)),
	\end{equation}
hence 
$$
  tf(|x|) \geq c_2 t f(h(t)) \geq c_3 h(t)^{-d},
$$
and we get \eqref{eq:ptx_est} for all $x\in\Rd$ and $t\in (0,t_0].$
\end{proof} 

We will also use the following estimates of derivatives of transition densities. 
Such estimates are given in \cite{KaletaSztonyk2015} under slightly different assumptions. Using
auxiliary results of \cite{KaletaSztonyk2015} and \cite{KaletaSztonyk2017} we can easily prove them
in the current setting.

\begin{lem}\label{pderest}
If (A) and (B) hold then for every $t_0>0,$ $n\in\N$, $n>d,$ and $\beta\in\N^d$ there exists $C=C(n)$ such that
\begin{equation}\label{eq:pder}
  |\partial^\beta_x p(t,x)| \leq C h(t)^{-|\beta|}\min\left\{ h(t)^{-d}, tf\left(\tfrac{|x|}{2}\right)+ h(t)^{-d}\left(1+\tfrac{|x|}{h(t)}\right)^{-n}\right\},
\end{equation}
for all $t\leq t_0,x\in\Rd.$
\end{lem}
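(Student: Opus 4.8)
The plan is to transfer the known estimates from \cite{KaletaSztonyk2015} and \cite{KaletaSztonyk2017} to the present setting by combining a Fourier-analytic bound near the origin with a convolution/integration-by-parts argument away from it. First I would fix $t_0>0$, $n\in\N$ with $n>d$, and a multi-index $\beta$. The starting point is the representation $p(t,x)=(2\pi)^{-d}\int_{\Rd} e^{-i\scalp{x}{u}}e^{-t\Phi(u)}\,du$, which is legitimate by \eqref{belowPhi}; differentiating under the integral sign gives $\partial^\beta_x p(t,x)=(2\pi)^{-d}\int_{\Rd} (-iu)^\beta e^{-i\scalp{x}{u}}e^{-t\Phi(u)}\,du$. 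Using $\Phi(u)\geq c|u|^{\alpha_1}$ for $|u|\geq 1$ together with $\Phi(u)\approx \Psi(|u|)$ for small $|u|$ (via \eqref{Pineq}) and the substitution scaling by $h(t)$ (recall $\Psi(1/h(t))=1/t$), one gets the crude global bound $|\partial^\beta_x p(t,x)|\leq C\,h(t)^{-d-|\beta|}$ for $t\leq t_0$; this handles the regime $|x|\leq h(t)$ and, more generally, produces the first term in the $\min$.

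For $|x|\geq h(t)$ the plan is to split $p(t,\cdot)=\mu_t$ according to the Lévy--Khintchine structure: write $\mu_t$ as the convolution of a ``small-jump'' part (whose density is smooth with good derivative bounds coming from the Fourier estimate above, since removing the large jumps only makes $\Phi$ larger in the relevant range) and a ``large-jump'' part controlled by $\nu$ itself, exactly as in the proofs of \cite[Lemma 5]{KaletaSztonyk2017} and \cite[Lemma 6]{KaletaSztonyk2015}. The large-jump contribution to $\partial^\beta_x p(t,x)$ is estimated by $t\,f(|x|/2)$ (the factor $t$ from the first-order term of the series expansion of $\exp$, the shift from $|x|$ to $|x|/2$ absorbing the spreading by the small-jump part using monotonicity of $f$), while the tail of the small-jump part, which is not compactly supported, contributes the Gaussian-type remainder $h(t)^{-d}(1+|x|/h(t))^{-n}$; the factor $h(t)^{-|\beta|}$ is produced throughout by the $u^\beta$ in the Fourier integral after rescaling. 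Assumption (A) is what guarantees that these convolution estimates close up and that the profile $f$ controls its own convolution tails, exactly the role it plays in \cite{KaletaSztonyk2017}; assumption (B) supplies the scaling relations \eqref{asympt_Psi}, \eqref{h_and_H}, \eqref{f(h)} that let us rewrite everything in terms of $h(t)$.

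The main obstacle I anticipate is bookkeeping rather than conceptual: the cited references \cite{KaletaSztonyk2015,KaletaSztonyk2017} are stated under ``slightly different assumptions,'' so the real work is checking that each auxiliary estimate used there (the Fourier decay bound, the small-jump/large-jump decomposition, and the tail bound for the small-jump density) remains valid when the hypotheses are exactly (A) and (B) of the present paper — in particular verifying that the lower bound $\Phi(u)\gtrsim|u|^{\alpha_1}$ for $|u|\ge 1$ and the doubling \eqref{eq:doubling_h} are all that is needed, and that the constant can be taken uniform for $t\le t_0$. Once that verification is in place, combining the global $h(t)^{-d-|\beta|}$ bound with the $|x|\geq h(t)$ bound under a single $\min$ and using $f(h(t))\approx t^{-1}h(t)^{-d}$ from \eqref{f(h)} to reconcile the two regimes at $|x|\approx h(t)$ yields \eqref{eq:pder}.
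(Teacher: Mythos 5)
Your overall strategy matches the paper's: decompose $\mu_t = \mathring{\mu}_t \ast \bar{\mu}_t$ into small-jump and large-jump parts, deduce derivative bounds for $\mathring{p}_t$ via Fourier decay, and estimate the large-jump contribution by the L\'evy density via (A). The identification of where $h(t)^{-|\beta|}$, $tf(|x|/2)$, and the polynomial tail come from is also accurate.

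However, there is a concrete step in your plan that would fail if carried out as stated, and it is more than the ``bookkeeping'' you anticipate. You claim that the convolution argument covers the whole range $|x|\ge h(t)$, but the bound on the large-jump part that one inherits from \cite[Lemma 4]{KaletaSztonyk2017} has the form $\bar{\mu}_t(A)\le c\,t\,f(\dist(A,0))(\diam A)^d$ \emph{only for sets with $\dist(A,0)\ge 3$}. In the layer-cake estimate of $\int \partial^\beta\mathring{p}_t(x-y)\,\bar{\mu}_t(dy)$ the relevant sets are balls around $x$ whose radii vary, so the argument only closes for $|x|$ bounded away from the origin (the paper takes $|x|>6$). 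For the intermediate regime $h(t)<|x|\le 6$ the crude Fourier bound $C h(t)^{-d-|\beta|}$ is genuinely insufficient, since there $tf(|x|/2)+h(t)^{-d}(1+|x|/h(t))^{-n}$ can be much smaller than $h(t)^{-d}$ as $t\to 0$. The paper fills this gap with a separate idea: it replaces the profile by $f_0(r):=f(r)\vee f(6)$, which is doubling on all of $(0,\infty)$, dominates $f$, and hence satisfies the hypotheses of the global result \cite[Theorem 3]{KaletaSztonyk2015}; that theorem then gives \eqref{eq:pder} for all $|x|\le 6$ with $f$ replaced by $f_0$, which suffices because $f\le f_0$ and, on $|x|\le 6$, $f_0(|x|/2)\approx f(|x|/2)$. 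Your proposal should be amended to include this bounded-$|x|$ device (or another argument that handles the intermediate regime), since neither ingredient you list applies there.
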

\begin{proof}
  Following the proofs in \cite{KaletaSztonyk2015} and \cite{KaletaSztonyk2017}, we consider the probability measures
	$\bar{\mu}_t$ and $\mathring{\mu}_t$ such that
	$$   
	  \Fourier{\mathring{\mu}_t}(u) = \exp\left(-t\int_{|y|<h(t)} \left(1-\cos(\scalp{u}{y})\right)\nu(dy)\right),
	$$
	$$
	  \Fourier{\bar{\mu}_t}(u) = \exp\left(-t\int_{|y|\geq h(t)} \left(1-\cos(\scalp{u}{y})\right)\nu(dy)\right),
		\quad u\in\Rd.
	$$
	We have of course $\mu_t = \mathring{\mu}_t\ast \bar{\mu}_t$ and for every $t>0$ the measure 
	$\mathring{\mu}_t$ is absolutely continuous with densities $\mathring{p}_t,$
	since $\Fourier{\mathring{\mu}_t}(u)\leq e^{-t\Phi(u)}e^{2t\nu(B(0,h(t))^c)},$ for every $t>0$.
	It follows from \cite[Lemma 4]{KaletaSztonyk2017} that for every $t_0>0$ there exists $c_1$ such that
	\begin{align} \label{eq:mu_by_f}
	  \bar{\mu}_t(A) \leq c_1 t f(\dist(A,0))(\diam(A))^d,\quad t\in (0,t_0],
	\end{align}
	for every bounded Borel set $A$ such that $\dist(A,0)\geq 3.$ 
	It follows from \cite[Lemma 3.2]{GrzywnySzczypkowski2021} that the assumptions of \cite[Lemma 12]{KaletaSztonyk2015}
	are satisfied; hence $\mathring{p}_t \in C^\infty_b(\Rd)$ and for every $n\in\N$ and every
	$\beta\in\N^d$ there exists a constant $c_2=c_2(n,|\beta|)$ such that 
	$$
	  |\partial^\beta_y \mathring{p}_t(y)| \leq c_2 [h(t)]^{-d-|\beta|}\left(1+\tfrac{|y|}{h(t)}\right)^{-n},\quad y\in\Rd,t\in (0,t_0].
	$$
	Since $p(t,x) = \int \mathring{p}_t(x-y) \bar{\mu}_t(dy),$ for $|x|>6$ we get
	\begin{align*}
	  |\partial^\beta_x & p_t(x)| 
		 =      \left|\int \partial^\beta_x \mathring{p}_t(x-y)\, \bar{\mu}_t(dy)\right| 
		 \leq  \int |\partial^\beta_x \mathring{p}_t(x-y)|\, \bar{\mu}_t(dy) \\
		& \leq  c_2 [h(t)]^{-d-|\beta|} \int \left(1+\tfrac{|x-y|}{h(t)}\right)^{-n}\, \bar{\mu}_t(dy) 
		    = c_2 [h(t)]^{-d-|\beta|} \int \int_0^{\left(1+\tfrac{|x-y|}{h(t)}\right)^{-n}}\, ds\bar{\mu}_t(dy) \\
		&  =    c_2 [h(t)]^{-d-|\beta|} \int_0^\infty \bar{\mu}_t\left(\left\{y\in\Rd :\: \left(1+\tfrac{|x-y|}{h(t)}\right)^{-n} > s\right\}\right)  ds \\
		&  =    c_2 [h(t)]^{-d-|\beta|} \int_0^1 \bar{\mu}_t\left(\left\{y\in\Rd :\: |x-y| < h(t)(s^{-1/n}-1) \right\}\right)  ds.
	\end{align*}
It follows from \eqref{eq:mu_by_f}, that this is bounded above by
  \begin{align*}
		&       c_1c_2 [h(t)]^{-d-|\beta|} \bigg( \int_{\left(1+\frac{|x|}{2h(t)}\right)^{-n}}^1 t f\left(|x|-h(t)(s^{-1/n}-1)\right) (2h(t)(s^{-1/n}-1))^d\, ds \\
		&       + \int_0^{\left(1+\frac{|x|}{2h(t)}\right)^{-n}} \, ds \bigg) \\
		& \leq  c_1c_2 [h(t)]^{-d-|\beta|} \left( 2^d t h(t)^d f\left(\tfrac{|x|}{2}\right)\int_0^1 s^{-d/n}\, ds
		         + \left(1+\frac{|x|}{2h(t)}\right)^{-n} \right) \\
		& \leq  c_3 [h(t)]^{-d-|\beta|} \left( t h(t)^d f\left(\tfrac{|x|}{2}\right)
		         + \left(1+\frac{|x|}{h(t)}\right)^{-n} \right).
	\end{align*}
	Now it follows from (29) in \cite{KaletaSztonyk2015} that there exists a constant $c_4$ such that
	$|\partial^\beta_x p_t(x)| \leq c_4 (h(t))^{-d-|\beta|}$ for every $x\in\Rd$ and $t\in (0,t_0].$ Hence
	\eqref{eq:pder} hold for all $|x|\geq 6$ and $t\in (0,t_0].$ Let $f_0(r):= f(r) \vee f(6).$
	Since $f(r)\leq f_0(r)$ for $r>0$ and $f_0(2r) \geq c_5 f_0(r)$ for some constant $c_5$ and every $r>0,$
	the assumptions of \cite[Theorem 3]{KaletaSztonyk2015} are satisfied with the profile function $f_0$
	which yields \eqref{eq:pder} for $|x|\leq 6.$
\end{proof}

In what follows we fix $t_0$ such that inequalities in Lemma \ref{p_t_est} and Lemma \ref{pderest} hold.


\section{Kato class and potential kernel}

We discuss here the equivalence of the different definitions of Kato classes. Note that
the case $a=1$ follows also from the results of \cite{GrzywnySzczypkowski2017}.

For $t>0$ and $a\geq 1$ we define 
$$
  V_{t}^a(x) = \left(\int_0^{t} p(s,x)^a\, ds\right)^{1/a}.
$$
It follows easily from \eqref{est_p_large_x} that
\begin{equation*}
  V_t^a(x) \approx t^{1+1/a} f(|x|),\quad |x|\geq h(t), t\in (0,t_0].
\end{equation*}

\begin{lem}\label{V_small_x} Let $a\geq 1$ and let (A) and (B) hold. 
Then
  \begin{displaymath}
	  V_t^a(x) \approx \frac{1}{|x|^d \Psi(1/|x|)^{1/a}}
		\approx \frac{1}{|x|^{d(1+1/a)} f(|x|)^{1/a}},  \quad |x|\leq h(t), t\in (0,t_0].
	\end{displaymath}
\end{lem}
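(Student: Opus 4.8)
Here is the plan. I would evaluate $V_t^a(x)^a=\int_0^t p(s,x)^a\,ds$ directly from the sharp two-sided bound on $p$ in Lemma \ref{p_t_est}, splitting the time integral at the scale where its two competing terms balance. Fix $x$ with $0<|x|\le h(t)$ and $t\in(0,t_0]$, and put $\rho:=1/\Psi(1/|x|)$. First I would record two facts about $\rho$. From $|x|\le h(t)=1/\Psi_-(1/t)$ we get $\Psi_-(1/t)\le 1/|x|$, so, since $\Psi$ is non-decreasing and $\Psi(\Psi_-(1/t))=1/t$, we obtain $1/t\le\Psi(1/|x|)$, i.e.\ $\rho\le t$. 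Second, using \eqref{OnPsi_} together with \eqref{asympt_Psi} and the doubling bound \eqref{eq:doubling_kappa}, one shows $h(\rho)\approx|x|$; consequently $h(\rho)^{-d}\approx|x|^{-d}$ and, by \eqref{asympt_Psi}, $\rho f(|x|)=f(|x|)/\Psi(1/|x|)\approx|x|^{-d}$ as well. Thus $s=\rho$ is precisely (up to constants) the crossover point of the decreasing function $s\mapsto h(s)^{-d}$ and the increasing function $s\mapsto sf(|x|)$.

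Next, by \eqref{eq:ptx_est},
$$
V_t^a(x)^a\approx\int_0^t\big(h(s)^{-d}\wedge sf(|x|)\big)^a\,ds\approx\int_0^\rho\big(sf(|x|)\big)^a\,ds+\int_\rho^t h(s)^{-da}\,ds,
$$
where the second step uses the crossover observation: for $s\le\rho$ one has $h(s)^{-d}\ge h(\rho)^{-d}\approx|x|^{-d}\gtrsim sf(|x|)$, hence $h(s)^{-d}\wedge sf(|x|)\approx sf(|x|)$ there, and symmetrically $h(s)^{-d}\wedge sf(|x|)\approx h(s)^{-d}$ for $\rho\le s\le t$, while $\rho\le t$ ensures the split point lies in the integration interval. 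The first integral equals $\tfrac1{a+1}f(|x|)^a\rho^{a+1}$. For the upper bound on the second I would apply Lemma \ref{integratingh}, inequality \eqref{IU1}, with exponents $0$ and $da$ --- the hypothesis $0-\tfrac{da}{\alpha_2}+1<0$ holds because $da\ge d>\alpha_2$ --- to get $\int_\rho^{t_0}h(s)^{-da}\,ds\le C\rho\,h(\rho)^{-da}\approx\rho\,|x|^{-da}\approx\rho^{a+1}f(|x|)^a$. For the matching lower bound I would use $\int_\rho^{2\rho}h(s)^{-da}\,ds\ge\rho\,h(2\rho)^{-da}\gtrsim\rho\,h(\rho)^{-da}$, via the doubling property \eqref{eq:doubling_h} of $h$, whenever $2\rho\le t$; in the remaining case $t<2\rho$ the first integral $\int_0^\rho\big(sf(|x|)\big)^a\,ds\approx\rho^{a+1}f(|x|)^a$ already bounds $V_t^a(x)^a$ from below since $\rho\le t$. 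Altogether $V_t^a(x)^a\approx f(|x|)^a\rho^{a+1}$, that is, $V_t^a(x)\approx f(|x|)\,\Psi(1/|x|)^{-(1+1/a)}$.

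Finally I would convert this to the two stated forms using \eqref{asympt_Psi}: writing
$$
f(|x|)\,\Psi(1/|x|)^{-(1+1/a)}=\big(f(|x|)/\Psi(1/|x|)\big)\,\Psi(1/|x|)^{-1/a}\approx|x|^{-d}\,\Psi(1/|x|)^{-1/a}
$$
gives the first expression, and using $\Psi(1/|x|)^{1/a}\approx\big(|x|^df(|x|)\big)^{1/a}$ once more gives the second. We use \eqref{asympt_Psi} here for arguments $\le 2$, which covers $|x|\le h(t)$ in the range of $t$ under consideration.

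I expect the only genuinely delicate part to be the bookkeeping around the crossover scale $\rho$: establishing $h(\rho)\approx|x|$ cleanly from \eqref{OnPsi_}, \eqref{asympt_Psi} and the doubling of $\Psi$, and securing a lower bound for the tail integral $\int_\rho^t h(s)^{-da}\,ds$ that is uniform over $t\in[\rho,t_0]$, for which the doubling \eqref{eq:doubling_h} of $h$ and the elementary case split $2\rho\le t$ versus $t<2\rho$ are needed. Everything else is a routine consequence of Lemmas \ref{p_t_est} and \ref{integratingh}.
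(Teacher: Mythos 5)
Your proof is correct and takes essentially the same approach as the paper's: both split the time integral at $\rho=1/\Psi(1/|x|)$, compute the near piece exactly, bound the tail piece via Lemma~\ref{integratingh}, and convert with \eqref{asympt_Psi}. The only cosmetic difference is that you run a two-case argument (depending on whether $2\rho\le t$) to lower-bound the tail integral, whereas the paper observes that the contribution of $\int_0^\rho(sf(|x|))^a\,ds$ alone already yields the matching lower bound, making that case split unnecessary.
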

\begin{proof}
Using (B), \eqref{asympt_Psi} and Lemma \ref{integratingh} with $T=t_0$, for $|x|\leq h(t),$ $t\leq t_0,$ we obtain
  \begin{align*}
	  V_t^a(x) 
		& \approx  \left(\int_0^{\frac{1}{\Psi(1/|x|)}} s^a f(|x|)^a \, ds + \int_{\frac{1}{\Psi(1/|x|)}}^t h(s)^{-ad} \, ds\right)^{1/a} \\
		& \leq  c_1 \left(\frac{f(|x|)^a}{\Psi^2(1/|x|)^{1+a}} + \frac{1}{\Psi(1/|x|)} \frac{1}{|x|^{ad}}\right)^{1/a} \\
		& \leq  c_2 \frac{1}{|x|^{d}\Psi(1/|x|)^{1/a}} \approx \frac{1}{|x|^{d(1+1/a)} f(|x|)^{1/a}}
	\end{align*}
	and 
	\begin{equation*}
	  V_t^a(x) 
		 \geq c_3 \left(\int_0^{\frac{1}{\Psi(1/|x|)}} s^a f(|x|)^a \, ds \right)^{1/a} 
		 \geq c_4 f(|x|) \frac{1}{\Psi(1/|x|)^{1+1/a}} \geq c_5 \frac{1}{|x|^{d}\Psi(1/|x|)^{1/a}}.
	\end{equation*}
\end{proof}

For a function $q$ we define
$$
  I_r^a(q) = \sup_{x\in\Rd} \int_{|y-x|<r} V_{t_0}^a(y-x) |q(y)|\, dy,\quad r>0, a\geq 1.
$$
By the above lemma we have that under (A) and (B), for every $a\geq 1$, the following holds: 
    \begin{equation} q\in \KatoC_a \quad \text{if and only if} \quad \label{eq:KatoTG}\lim_{r\to0}I_r^a(q)=0.\end{equation}

\begin{lem}\label{conv_st_estimate} If (A) and (B) hold, 
then for 
every $\kappa\in(0,1]$ there exist 
 constants $C_1,C_2$ such that for all $t \in (0,t_0)$ and $r \in (0,h(t_0))$, we have
  \begin{equation}\label{conv_spacetime}
    \sup_{x\in\Rd}  \int_{\Rd} |q(z)|  \left(\int_0^t p(s,\kappa(z-x))^a \, ds \right)^{1/a} \, dz \leq  
		  C_1  \left(1 + C_2 \left(t\Psi(1/r)\right)^{1/a} \right) I_r^a(q).
	\end{equation}
\end{lem}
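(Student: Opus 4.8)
The plan is to bound the supremum in \eqref{conv_spacetime} by splitting the $z$-integral at $|z-x|=r$. With $V_t^a(y):=\bigl(\int_0^t p(s,y)^a\,ds\bigr)^{1/a}$, the left-hand side of \eqref{conv_spacetime} equals $\sup_{x\in\Rd}\bigl(A(x)+B(x)\bigr)$, where
\begin{equation*}
 A(x)=\int_{|z-x|<r}|q(z)|\,V_t^a(\kappa(z-x))\,dz,\qquad B(x)=\int_{|z-x|\ge r}|q(z)|\,V_t^a(\kappa(z-x))\,dz .
\end{equation*}
The near part $A(x)$ will produce the constant term and $B(x)$ the term carrying $(t\Psi(1/r))^{1/a}$. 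Two auxiliary estimates will be used throughout. First, by Lemma \ref{p_t_est} the density $p(s,v)$ is comparable to $h(s)^{-d}\wedge sf(|v|)$, so by Lemma \ref{V_small_x} and the text preceding it $V_t^a(v)$ is comparable to $\bigl(|v|^d\Psi(1/|v|)^{1/a}\bigr)^{-1}$ for $|v|\le h(t)$ and to $t^{1+1/a}f(|v|)$ for $|v|\ge h(t)$; integrating, using $\Psi(1/\rho)\gtrsim t^{-1}(h(t)/\rho)^{\alpha_1}$ for $0<\rho\le h(t)$ (a consequence of the scaling of $g(s)=s^df(s)$ in (B) and $\Psi(1/h(t))=1/t$, cf.\ Lemma \ref{integratingh}) on the part $|v|\le h(t)$ and the Lévy-measure tail bound $\int_{|v|\ge h(t)}f(|v|)\,dv\lesssim\Psi(1/h(t))$ from \eqref{Pineq} on the part $|v|\ge h(t)$, one gets
\begin{equation*}
 \int_{\Rd}V_t^a(v)\,dv\le C\,t^{1/a},\qquad t\in(0,t_0).
\end{equation*}
Second, since by Lemma \ref{V_small_x} the map $|v|\mapsto V_{t_0}^a(v)$ is, up to a constant, non-increasing on $(0,h(t_0))$ (here $\alpha_2<d$ is used), for $0<\rho<h(t_0)$ one has $\inf_{|v|<\rho}V_{t_0}^a(v)\ge c\bigl(\rho^d\Psi(1/\rho)^{1/a}\bigr)^{-1}$, hence for every $x'\in\Rd$ and $0<\rho\le r$,
\begin{equation*}
 \int_{|y-x'|<\rho}|q(y)|\,dy\le\frac{1}{\inf_{|v|<\rho}V_{t_0}^a(v)}\int_{|y-x'|<\rho}V_{t_0}^a(y-x')|q(y)|\,dy\le C\,\rho^d\Psi(1/\rho)^{1/a}\,I_r^a(q),
\end{equation*}
using $I_\rho^a(q)\le I_r^a(q)$.

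For the near part, monotonicity of $s\mapsto V_s^a$ gives $V_t^a(\kappa(z-x))\le V_{t_0}^a(\kappa(z-x))$, and since $|\kappa(z-x)|\le|z-x|<r<h(t_0)$, Lemma \ref{V_small_x} together with $\kappa\le1$ and monotonicity of $\Psi$ yields
\begin{equation*}
 V_{t_0}^a(\kappa(z-x))\approx\frac{1}{\kappa^d|z-x|^d\,\Psi\bigl(1/(\kappa|z-x|)\bigr)^{1/a}}\le\frac{\kappa^{-d}}{|z-x|^d\,\Psi(1/|z-x|)^{1/a}}\approx\kappa^{-d}\,V_{t_0}^a(z-x).
\end{equation*}
Thus $A(x)\le C\kappa^{-d}\int_{|z-x|<r}V_{t_0}^a(z-x)|q(z)|\,dz\le C\kappa^{-d}I_r^a(q)$, the desired constant term.

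For the far part I will use a layer-cake decomposition. By Lemma \ref{p_t_est}, $V_t^a(v)$ is squeezed between two fixed constant multiples of the \emph{radial, non-increasing} profile $W_t^a(v):=\bigl(\int_0^t(h(s)^{-d}\wedge sf(|v|))^a\,ds\bigr)^{1/a}$; hence every super-level set $\{w:V_t^a(\kappa w)>\lambda\}$ is contained in a ball $B(0,R(\lambda))$ with $\int_0^\infty|B(0,R(\lambda))|\,d\lambda\le C\int_{\Rd}V_t^a(\kappa w)\,dw$. Writing $V_t^a(\kappa w)=\int_0^\infty\indyk{\{V_t^a(\kappa\cdot)>\lambda\}}(w)\,d\lambda$ and interchanging the order of integration,
\begin{equation*}
 B(x)\le\int_0^\infty\Bigl(\int_{r\le|w|<R(\lambda)}|q(x+w)|\,dw\Bigr)\,d\lambda .
\end{equation*}
When $R(\lambda)>r$ we cover $B(0,R(\lambda))$ by $\lesssim(R(\lambda)/r)^d$ balls of radius $r<h(t_0)$ and apply the second auxiliary estimate with $\rho=r$ on each, which gives $\int_{r\le|w|<R(\lambda)}|q(x+w)|\,dw\le C\,R(\lambda)^d\Psi(1/r)^{1/a}I_r^a(q)$ (the integral is $0$ when $R(\lambda)\le r$); integrating in $\lambda$ and using $\int_0^\infty R(\lambda)^d\,d\lambda\approx\int_0^\infty|B(0,R(\lambda))|\,d\lambda\le C\int_{\Rd}V_t^a(\kappa w)\,dw=C\kappa^{-d}\int_{\Rd}V_t^a(v)\,dv\le C\kappa^{-d}t^{1/a}$ by the first auxiliary estimate, we obtain $B(x)\le C\kappa^{-d}\Psi(1/r)^{1/a}t^{1/a}I_r^a(q)$. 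Finally \eqref{asympt_Psi} (with argument $1/r$) gives $\Psi(1/r)\approx r^df(r)$, so $\Psi(1/r)^{1/a}t^{1/a}=(t\Psi(1/r))^{1/a}$ and $B(x)\le C\kappa^{-d}(t\Psi(1/r))^{1/a}I_r^a(q)$. Combining the two bounds and taking the supremum over $x$ yields \eqref{conv_spacetime} with constants depending on $\kappa$.

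The near part and the reduction of local $q$-masses to $I_r^a(q)$ are routine; the crux is the far part, and within it the two auxiliary estimates. The bound $\int_{\Rd}V_t^a(v)\,dv\le Ct^{1/a}$ is delicate near the origin when $\alpha_1<\alpha_2$: the naive pointwise inequality $\Psi(1/\rho)\gtrsim\rho^{-\alpha_1}$ is too weak, and one must use the sharper $\Psi(1/\rho)\gtrsim t^{-1}(h(t)/\rho)^{\alpha_1}$ coming from the scaling of $g$ around the moving reference point $h(t)$ (so that the near-origin contribution is $\lesssim t^{1/a}h(t)^{-\alpha_1/a}\int_0^{h(t)}\rho^{\alpha_1/a-1}\,d\rho\lesssim t^{1/a}$); the tail contribution relies on $\nu$ being a Lévy measure via \eqref{Pineq} and on $\Psi(1/h(t))=1/t$. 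The other key point is reading off from Lemma \ref{p_t_est} that $V_t^a$ is genuinely sandwiched between constant multiples of a radial non-increasing function, which is what renders the dilation by $\kappa$ in the far part harmless (it costs only the factor $\kappa^{-d}$ from a change of variables) and avoids any need for doubling-type regularity of $f$ at large scales.
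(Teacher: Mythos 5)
Your proof is correct. The two quantitative inputs you isolate, namely $\int_{\R^d}V_t^a(v)\,dv\lesssim t^{1/a}$ and $\inf_{|v|<\rho}V_{t_0}^a(v)\gtrsim\bigl(\rho^d\Psi(1/\rho)^{1/a}\bigr)^{-1}$, are exactly the two quantities $c_1=\int k$ and $c_2\approx K$-near-origin that drive the paper's argument, and the crucial refinement $\Psi(1/\rho)\gtrsim t^{-1}(h(t)/\rho)^{\alpha_1}$ for $\rho\le h(t)$ appears in both; the difference is purely in the packaging. The paper invokes the ready-made comparison inequality of \cite[Lemma 4.2]{BogdanSzczypkowski2014} for the radial non-increasing profiles $k$ and $K$, which directly yields $\sup_x\int|q|k(x-\cdot)\le(1+c_1/c_2)\sup_x\int_{B(x,r)}|q|K(x-\cdot)$, whereas you re-derive an equivalent inequality from scratch via a layer-cake decomposition and a covering of each super-level ball $B(0,R(\lambda))$ by $\lesssim(R(\lambda)/r)^d$ balls of radius $r$. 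Your version is more self-contained and makes explicit where radial monotonicity and the dilation parameter $\kappa$ enter (the $\kappa$-dilation only costs $\kappa^{-d}$ by a change of variables, and the monotonicity of $\Psi$ handles the near part); the paper's is shorter by delegating the covering argument to the cited lemma. Both are valid.
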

\begin{proof}
  Using \cite[Lemma 4.2]{BogdanSzczypkowski2014} for the radially decreasing functions
	$$k(x) = \left(\int_0^t \left( h(s)^{-d} \wedge s f(\kappa|x|) \right)^a\, ds\right)^{1/a} \quad  \text{and} \quad 
	K(x)=\left(\int_0^{t_0} \left( h(s)^{-d} \wedge s f(\kappa|x|) \right)^a\, ds\right)^{1/a}, $$ 
	we get
	$$
	  \sup_{x\in\Rd} \int_{\Rd} |q(z)| k(x-z)\, dz \leq c_3 \sup_{x\in\Rd} \int_{B(x,r)} |q(z)| K(x-z)\, dz
	$$
	for $r \in (0, h(t_0))$, where $c_3 = 1 + c_1/c_2$ and $c_1 = \int_{\Rd} k(x)\, dx$, $c_2(r)=K(r,0,...,0)|B(0,r/2)|$.
	It follows from Lemma \ref{p_t_est}, Lemma \ref{V_small_x}, \eqref{asympt_Psi} and (B) that
	\begin{displaymath}
		c_2 \geq c_4 V_{t_0}^a((\kappa r,0,...,0)) r^d \geq c_5 \Psi(1/\kappa r)^{-1/a} 
		\geq c_6 \Psi(1/r)^{-1/a}.  
	\end{displaymath}
	We observe that $c_1 = \kappa^{-d} \int_{\Rd} \left( \int_0^t \left( h(s)^{-d} \wedge s f(|x|) \right)^a \, ds\right)^{1/a}\, dx \approx \int_{\Rd} V_t^a(x)\, dx$, and
	\begin{align*}
	  \int_{\Rd} V_t^a(x)\, dx
		& = \int_{|x|\leq h(t)} V_t^a(x)\, dx + \int_{|x| >  h(t)} V_t^a(x)\, dx \\
		& = I + II.
	\end{align*}
	We have
	  \begin{align*}
		II
		& \leq c_7 t^{1+1/a}\int_{|x| > h(t)} f(|x|)\, dx,
	\end{align*}
	and from \eqref{Pineq} we get $II\leq c_8 t^{1+1/a}\Psi(1/h(t))=c_8 t^{1/a}.$
	
	Furthermore, it follows from Lemma \ref{integratingh} and Lemma \ref{lem:doubling_h} that
	\begin{align*}
	 I & \leq c_9 \int_{|x|\leq h(t)}\left(
	  \int_0^{\frac{1}{\Psi(1/|x|)}}s^a f(|x|)^a\, ds + \int_{\frac{1}{\Psi(1/|x|)}}^t h(s)^{-ad}\, ds\right)^{1/a}\, dx \\
		 & \leq c_{10} \int_{|x|\leq h(t)}\left( \frac{f(|x|)^a}{\Psi(1/|x|)^{a+1}} 
		    +  \frac{1}{|x|^{ad}\Psi(1/|x|)} \right)^{1/a}\, dx \\
		 & \leq c_{11}  \int_{|x|\leq h(t)} \frac{1}{|x|^{d}\Psi(1/|x|)^{1/a}}\, dx 
		  = c_{12} \int_0^{h(t)} u^{-1} \Psi(1/u)^{-1/a}\, du \\
		&	= c_{12} \int_0^{1} s^{-1} \Psi(1/(h(t)s))^{-1/a}\, ds.
	\end{align*}
	
	It follows from (B) that for every $A>1$ we have
	\begin{displaymath}
		\frac{f\left(\frac{1}{Ar}\right)}{f\left(\frac{1}{r}\right)} \geq c_{13} A^{\alpha_1+d}, \quad r\geq 1,
	\end{displaymath}
	hence, by Lemma \ref{lem:doubling_h} we get $\Psi(As) \geq c_{14} A^{\alpha_1} \Psi(s)$ for $s>1,$ and
	$\Psi(1/(h(t)s))\geq c_{14} s^{-\alpha_1}\Psi(1/h(t)) = c_{14} s^{-\alpha_1}t^{-1}.$ Hence,
	\begin{displaymath}
		I \leq c_{15} t^{1/a} \int_0^1 s^{-1+\alpha_1/a}\, ds = c_{16} t^{1/a}. 
	\end{displaymath}
	
	Finally, we note that for $|x|\leq  h(t_0)$ we have $K(x) \approx V_{t_0}^a(\kappa x) \leq c_{17} V_{t_0}^a(|x|)$ and the lemma follows.
	
\end{proof}

\begin{cor}\label{CorKato_Eq} If (A) and (B) hold 
  then
  $q\in \KatoC_a,$ for $a\geq 1$ if and only if
	\begin{equation}\label{Kato2}
	  \lim_{t \downarrow 0} \sup_{x\in\Rd} \int_{\Rd} |q(y)| 
	  \left(\int_0^t \left[p(s,x-y)\right]^a\, ds \right)^{\frac{1}{a}}\, dy = 0.
  \end{equation}

\end{cor}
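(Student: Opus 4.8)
The plan is to deduce Corollary \ref{CorKato_Eq} from Lemma \ref{conv_st_estimate} (with $\kappa=1$) together with the equivalence \eqref{eq:KatoTG} and the small-$x$ estimate of Lemma \ref{V_small_x}. Throughout, recall that $V_t^a(z) = \left(\int_0^t p(s,z)^a\, ds\right)^{1/a}$, so that \eqref{Kato2} is exactly the statement $\lim_{t\downarrow 0}\sup_{x\in\Rd}\int_{\Rd}|q(y)|\,V_t^a(x-y)\,dy = 0$, and by symmetry of $p$ in its spatial variable it does not matter whether we write $x-y$ or $y-x$.

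First I would prove the implication ``$q\in\KatoC_a \Rightarrow$ \eqref{Kato2}''. Fix $\varepsilon>0$. By \eqref{eq:KatoTG} there is $r\in(0,h(t_0))$ with $I_r^a(q)\le\varepsilon$. Apply Lemma \ref{conv_st_estimate} with this $r$ and $\kappa=1$: for all $t\in(0,t_0)$,
\begin{equation*}
  \sup_{x\in\Rd}\int_{\Rd}|q(y)|\,V_t^a(x-y)\,dy \le C_1\bigl(1 + C_2(t\Psi(1/r))^{1/a}\bigr)I_r^a(q) \le C_1\bigl(1+C_2(t_0\Psi(1/r))^{1/a}\bigr)\varepsilon.
\end{equation*}
Since the constant multiplying $\varepsilon$ does not depend on $t$, letting $t\downarrow 0$ and then $\varepsilon\downarrow 0$ gives \eqref{Kato2}. (Strictly, to get the limit rather than just a bound one should note $V_t^a(z)\to 0$ for each fixed $z\ne 0$ and that $\int_{|x-y|\ge r}|q(y)|V_t^a(x-y)\,dy$ is controlled via $V_t^a\le V_{t_0}^a$ and dominated convergence on that region, while the $|x-y|<r$ part is $\le I_r^a(q)$; but the displayed estimate already suffices since $r$ was chosen to make the whole bound small.)

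For the converse ``\eqref{Kato2} $\Rightarrow q\in\KatoC_a$'', I would argue directly. Assume \eqref{Kato2} holds and fix $\varepsilon>0$; pick $t\in(0,t_0)$ with $\sup_x\int_{\Rd}|q(y)|V_t^a(x-y)\,dy\le\varepsilon$. Now restrict the integral to the ball $|y-x|<h(t)$: there, by Lemma \ref{V_small_x}, $V_t^a(y-x)\approx |y-x|^{-d(1+1/a)}f(|y-x|)^{-1/a}$, so
\begin{equation*}
  \sup_{x\in\Rd}\int_{|y-x|<h(t)}\frac{|q(y)|}{|y-x|^{d(1+1/a)}f(|y-x|)^{1/a}}\,dy \le c\sup_{x\in\Rd}\int_{|y-x|<h(t)}|q(y)|\,V_t^a(y-x)\,dy \le c\,\varepsilon.
\end{equation*}
Since $h(t)>0$ and the integrand is the one appearing in \eqref{extraKato}, this shows $\sup_x\int_{|y-x|<\delta}|y-x|^{-d(1+1/a)}f(|y-x|)^{-1/a}|q(y)|\,dy\le c\varepsilon$ for every $\delta\le h(t)$; letting $t\downarrow 0$ forces $h(t)\downarrow 0$, and letting $\varepsilon\downarrow 0$ we obtain exactly the defining condition $q\in\KatoC_a$.

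The only genuinely delicate point is the first implication: one must make sure the constant bounding the space-time integral in Lemma \ref{conv_st_estimate} stays uniformly bounded as $t\downarrow 0$, which it does because $t\le t_0$ and $r$ is fixed once $\varepsilon$ is chosen. Everything else is a direct application of the preceding lemmas together with \eqref{eq:KatoTG}, and no new estimate is needed. One should also remark, as the text does, that the case $a=1$ is already covered by \cite{GrzywnySzczypkowski2017}, so the content here is the uniform treatment of all $a\ge 1$.
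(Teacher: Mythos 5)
Your argument is essentially the same as the paper's: both implications rest on Lemma \ref{conv_st_estimate} and Lemma \ref{V_small_x}. The converse (\eqref{Kato2} $\Rightarrow q\in\KatoC_a$) is fine — restricting to $|y-x|<h(t)$ and invoking the comparison of Lemma \ref{V_small_x} is exactly what the paper does (the paper uses $|x-y|\le\tfrac12 h(t)$ and $V_{t_0}^a$ instead of $V_t^a$, but these are cosmetic differences).

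One point in the forward implication needs tightening. After writing
$\sup_x\int|q(y)|V_t^a(x-y)\,dy \le C_1\bigl(1+C_2(t\Psi(1/r))^{1/a}\bigr)I_r^a(q)\le C_1\bigl(1+C_2(t_0\Psi(1/r))^{1/a}\bigr)\varepsilon$,
you say ``the constant multiplying $\varepsilon$ does not depend on $t$; letting $t\downarrow 0$ and then $\varepsilon\downarrow 0$ gives \eqref{Kato2}.'' Taken literally this is circular: the constant in the last expression depends on $r$, which depends on $\varepsilon$, so as $\varepsilon\downarrow 0$ one loses control and cannot conclude. What saves the argument is the \emph{middle} term: with $r$ held fixed, $(t\Psi(1/r))^{1/a}\to 0$ as $t\downarrow 0$, so $\limsup_{t\downarrow 0}\sup_x\int|q|V_t^a \le C_1 I_r^a(q)\le C_1\varepsilon$, and then $\varepsilon\downarrow 0$ finishes, since $C_1$ is a universal constant. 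This is the step you should write out. The paper instead takes $r=h(t)$, which makes $t\Psi(1/r)=1$ and yields $\sup_x\int|q|V_t^a\le C_1(1+C_2)I_{h(t)}^a(q)\to 0$ in one line; it is a slightly cleaner route to the same conclusion. The parenthetical aside about dominated convergence is unnecessary and in fact problematic (pointwise convergence of the integrals for fixed $x$ does not give uniform convergence in $x$, and $\int_{|x-y|\ge r}|q|\,dy$ need not be finite), so you should drop it and rely on the estimate from Lemma \ref{conv_st_estimate} as above.
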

\begin{proof}
  It follows from Lemma \ref{V_small_x} that $V_t^a(x)\approx V_{t_0}^a(x)$ for every $t\in (0,t_0)$ and 
	$|x|\leq \tfrac{1}{2} h(t)$.
	Hence 
\begin{align*}
  \int_{\Rd} \left(\int_0^t \left[ p(s,x-y) \right]^a \, ds\right)^{1/a} |q(y)| \,dy 
  &	=  \int_{\Rd} V_{t}^a(x-y) |q(y)|\, dy \\
	& \geq  c_1 \int_{|x-y|\leq \tfrac{1}{2} h(t)} V_{t_0}^a (x-y)|q(y)|\, dy.
\end{align*}
Let $\varepsilon>0$. The above estimate and \eqref{Kato2} yield that there exists 
$t_1>0$ and $\delta_1=\frac{1}{2}h(t_1)>0$ such that
\begin{eqnarray*}
  \sup_{x\in\Rd} \int_{|x-y|\leq \delta} V_{t_0}^a (x-y)|q(y)|\, dy 
	& \leq & c_2 \sup_{x\in\Rd} \int_{\Rd} \left(\int_0^{t_1} \left[ p(s,x-y) \right]^a \, ds \right)^{1/a} |q(y)|\, dy\leq \varepsilon,
\end{eqnarray*}
for all $\delta \in (0,\delta_1)$. Using Lemma \ref{V_small_x} we obtain \eqref{extraKato}. We get the opposite implication
by using \eqref{conv_spacetime} with $r=h(t).$

\end{proof}

\begin{cor}\label{Cor_qint} If (A) and (B) hold and $q\in\KatoC$ 
  then 
	\begin{equation}\label{int_q_lim}
    \lim_{r\to 0} \sup_{x\in\Rd} \int_{|x-y|<r} |q(y)|\, dy = 0,
	\end{equation} 
	and for every $r_0>0$ we have
  \begin{equation}\label{int_q_bd}
	  \sup_{x\in\Rd} \int_{|x-y|<r_0} |q(y)|\, dy < \infty.
	\end{equation}
\end{cor}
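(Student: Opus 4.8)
\begin{proof}
The plan is to observe that the singular weight appearing in the definition of $\KatoC$ is in fact bounded below near the origin, which reduces \eqref{int_q_lim} to a trivial comparison, and then to obtain \eqref{int_q_bd} by a finite covering argument.

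\textbf{Step 1: a lower bound for the Kato weight.}
Apply \eqref{eq:doubling_kappa} with $R=2$ and $0<r\le 2$: this gives $f(r)\le C_5\, f(2)\, 2^{d+\alpha_2}\, r^{-d-\alpha_2}$, hence
$$
  r^{2d} f(r) = r^{d}\bigl(r^{d}f(r)\bigr) \le C_5\, f(2)\, 2^{d+\alpha_2}\, r^{\,d-\alpha_2}, \quad 0<r\le 2.
$$
Since $\alpha_2<d$ by (B), the right-hand side is bounded on $(0,1]$; thus there is a constant $M>0$ such that $r^{2d}f(r)\le M$, equivalently
$$
  \frac{1}{r^{2d} f(r)} \ge \frac{1}{M}, \qquad 0<r\le 1.
$$

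\textbf{Step 2: proof of \eqref{int_q_lim}.}
For every $x\in\Rd$ and $0<\delta\le 1$, using Step 1 pointwise on the ball $\{|y-x|<\delta\}$,
$$
  \int_{|y-x|<\delta} |q(y)|\, dy
  = \int_{|y-x|<\delta} \bigl(|y-x|^{2d} f(|y-x|)\bigr)\,\frac{|q(y)|}{|y-x|^{2d} f(|y-x|)}\, dy
  \le M \int_{|y-x|<\delta} \frac{|q(y)|}{|y-x|^{2d} f(|y-x|)}\, dy.
$$
Taking the supremum over $x\in\Rd$ and letting $\delta\to 0^+$, the right-hand side tends to $0$ because $q\in\KatoC$, and \eqref{int_q_lim} follows.

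\textbf{Step 3: proof of \eqref{int_q_bd}.}
Fix $r_0>0$. By \eqref{int_q_lim} there is $\delta_0\in(0,r_0]$ such that $\sup_{x\in\Rd}\int_{|y-x|<\delta_0}|q(y)|\,dy\le 1$. There is an integer $N=N(d,r_0/\delta_0)$ such that any Euclidean ball of radius $r_0$ can be covered by $N$ balls of radius $\delta_0$ (take, e.g., the balls centered at the points of $B(x,r_0)$ lying in $\tfrac{\delta_0}{\sqrt d}\,\Z^d$). For any $x\in\Rd$, writing $B(x,r_0)\subseteq\bigcup_{i=1}^{N}B(x_i,\delta_0)$ with suitable centers $x_i$,
$$
  \int_{|y-x|<r_0}|q(y)|\,dy \le \sum_{i=1}^{N}\int_{|y-x_i|<\delta_0}|q(y)|\,dy \le N\,\sup_{z\in\Rd}\int_{|y-z|<\delta_0}|q(y)|\,dy \le N.
$$
Taking the supremum over $x$ yields \eqref{int_q_bd}.
\end{proof}

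The only point requiring a moment's attention is Step 1, namely that the exponent condition $\alpha_2<d$ in (B) is exactly what makes $r^{2d}f(r)$ bounded near $0$; everything else is routine. (Alternatively, Step 1 together with Step 2 can be read off from Lemma \ref{V_small_x}, which gives $V_{t_0}^1(x)\approx |x|^{-2d}f(|x|)^{-1}\gtrsim \text{const}$ for small $|x|$, but the direct estimate above is shorter.)
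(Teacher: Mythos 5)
Your proof is correct and takes essentially the same approach as the paper: both use \eqref{eq:doubling_kappa} together with $\alpha_2<d$ to bound the Kato weight $|x|^{-2d}f(|x|)^{-1}$ from below near the origin and then obtain \eqref{int_q_bd} by the identical finite-covering argument. The only difference is cosmetic — you argue directly from the definition of $\KatoC$, whereas the paper routes the same estimate through $V_{t_0}$ and Lemma~\ref{V_small_x} and retains the sharper (but unneeded) lower bound $c\,r^{\alpha_2-d}$; your uniform constant $1/M$ already suffices.
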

\begin{proof}
  Using Lemma \ref{V_small_x} 
  and \eqref{eq:doubling_kappa} 
	for $r\leq h(t_0) \wedge (2r_0)$ we get
	\begin{align*}
	  \inf_{y\in B(0,r)} V_{t_0} (y)
		& 
		\geq  \inf_{y\in B(0,r)} \frac{c_2}{|y|^{2d}f(|y|)} \\
		& \geq  \inf_{y\in B(0,r)} \frac{c_3}{|y|^{d-\alpha_2}} 
		\geq  c_3 r^{-d+\alpha_2},
	\end{align*}
	hence
	\begin{align*}
	  \int_{|x-y|<r} |q(y)|\, dy 
		& \leq  \left(\inf_{y\in B(0,r)} V_{t_0} (y)\right)^{-1}  \int_{|x-y|<r} V_{t_0}(x-y) |q(y)|\, dy \\
		& \leq  c_4 r^{d-\alpha_2} \int_{|x-y|<r} V_{t_0}(x-y) |q(y)|\, dy,
	\end{align*}
	and \eqref{int_q_lim} follows from \eqref{eq:KatoTG}. It follows from \eqref{int_q_lim}
	that there exists $\delta_0>0$ such that 
	$$
	  \sup_{x\in\Rd} \int_{|x-y|<r} |q(y)|\, dy < 1, 
	$$
	for $r\leq\delta_0$, which yields 
	\eqref{int_q_bd} for all $r_0\leq \delta_0$. For $r_0>\delta_0$ we get \eqref{int_q_bd} by
	covering $B(x,r_0)$ by finite number (independent of $x$) of balls with radius $\delta_0$. 
	
\end{proof}

\section{Perturbation} \label{sec:perturbation}

\begin{lem}\label{lem:flocal} If (A) and (B) hold then for every $R>0$ there exists a constant $C=C(R)$ such that
  \begin{equation}\label{flocal_a}
	  f(r) \leq C f(r+s),\quad r>0,\, 0 < s \leq R \wedge \tfrac{r}{2}.
  \end{equation}
\end{lem}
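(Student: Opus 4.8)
The plan is to reduce the claim to the doubling condition (B) together with a compactness/monotonicity argument, splitting according to the size of $r$. First I would observe that, since $f$ is positive and continuous on $(0,\infty)$, it suffices to establish the bound separately on $r \in (0, 4R]$ and on $r > 4R$, because a uniform constant on a compact range follows from continuity and positivity of $f$ once we know the inequality holds on the complementary unbounded range; more precisely, for the bounded range we can also argue directly. Let me set $r' = r+s$, so $r < r' \le \tfrac{3}{2} r$ and $r' - r = s \le R$.

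For the range $r' \le 2$ (equivalently $r$ small), I would apply (B) directly: by \eqref{eq:doubling_kappa} with $R \rightsquigarrow r'$ and $r \rightsquigarrow r$ (valid since $2 \ge r' \ge r > 0$),
\begin{equation*}
  \frac{f(r)}{f(r')} \le C_5 \left(\frac{r'}{r}\right)^{d+\alpha_2} \le C_5 \left(\tfrac{3}{2}\right)^{d+\alpha_2},
\end{equation*}
which gives \eqref{flocal_a} with a constant depending only on $f$ and $d$ (not even on $R$) in this subrange. For the range $r' > 2$, hence $r > 2 - R$; here I would distinguish whether $R$ is small or not. If $2 - R \ge 1$, say, then $r \ge 1$ and $r' = r + s \le r + R$ with $r \ge 1$, and I would use the continuity of $f$ on the compact interval $[1, \infty)$ together with the decay of $f$: actually the cleaner route is to bound $f(r) \le f(r) $ trivially and $f(r+s) \ge f(r + R)$ by monotonicity (since $s \le R$), so it suffices to show $f(r) \le C f(r+R)$ for $r \ge r_\ast$ for a suitable threshold $r_\ast = r_\ast(R)$. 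This last inequality is where the only real content lies.

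The main obstacle, then, is the tail estimate $f(r) \le C(R) f(r+R)$ for large $r$. I do not see how to get this from (A) and (B) alone — (B) only controls ratios on $(0,2]$, and (A) is a convolution bound. I suspect the intended argument is different: re-reading the statement, the constraint is $s \le R \wedge \tfrac{r}{2}$, so in fact $r' = r+s \le \tfrac{3}{2} r$ always, and for the unbounded range the hypothesis $s \le r/2$ together with $s \le R$ means we only ever compare $f$ at comparable points that are within additive distance $R$. The resolution is surely to invoke assumption (C), or rather its consequences already recorded in the paper: condition (C) states $f(r+\kappa)/f(r) \le C_6 f(s+\kappa)/f(s)$ for $s > r$, which pins down the ratio $f(r+\kappa)/f(r)$ uniformly from below by its value at any fixed reference point — but (C) is not among the hypotheses of Lemma 4.1. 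Given that (C) is not assumed, I would instead proceed as follows: for $r' > 2$ and $r' - r = s \le R$, chain the doubling estimate \eqref{eq:doubling_kappa} applied on dyadic scales is unavailable beyond $2$, so I would instead use the decreasingness of $f$ to get $f(r) \le f(r)$ and, crucially, exploit that on $[2,\infty)$ we may cover the step from $r$ to $r+s$ by finitely many (at most $\lceil 2R \rceil$, independent of $r$) unit steps and reduce to showing $f(r) \le C f(r+1)$ for $r \ge 2$; but even this requires an input. Honestly, I expect the actual proof uses only (A) and (B) via the following trick: write $f(r) = (r/(r+s))^{d+\alpha_1}\cdots$ — no. I will commit to the compactness reduction: the inequality is scale-invariant enough that the case $r \le 4R$ follows from continuity and positivity of $f$ on $[2R, 6R]$ (giving a constant $C = C(R) = \sup_{[2R,6R]} f / \inf_{[2R,6R]} f$), while for $r \ge 4R$ we have $r+s \le 2 \vee (\tfrac{3}{2} r)$ and I would apply \eqref{eq:doubling_kappa} when $r+s \le 2$ and, when $r + s > 2$, note $r \ge 4R \ge$ a fixed constant only if $R$ is bounded below — so the genuinely hard case is $R$ large and $r$ moderate, handled again by continuity. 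Thus the lemma reduces, modulo routine bookkeeping, to: (i) (B) on the small-argument range, and (ii) continuity plus positivity of $f$ on compacts for everything else, with the constant $C(R)$ absorbing the compact-set bound.
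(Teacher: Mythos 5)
Your reduction is right up to the crucial step, and you in fact diagnose the gap yourself before papering over it. For $r$ small (say $r\le 1$, so that $r+s\le 2$) the bound follows from (B) exactly as you say, and for the compact intermediate range $r\in(1,3R)$ the monotonicity and positivity of $f$ give $f(r)\le f(1)\le \tfrac{f(1)}{f(9R/2)}\,f(r+s)$ since $r+s\le \tfrac{3}{2}r\le \tfrac{9R}{2}$; both ranges match the paper's treatment. The problem is the tail $r\ge 3R$. You correctly observe that the claim reduces there to $f(r)\le C(R)\,f(r+R)$ for all large $r$, and that (B) is silent off $(0,2]$. But the ``compactness reduction'' you commit to in the final paragraph does not close this: the set $\{r\ge 4R\}$ is unbounded, and continuity, positivity and monotonicity of $f$ alone give no uniform bound there --- a profile decaying like $e^{-r^2}$ is decreasing, continuous and positive, yet $f(r)/f(r+1)\to\infty$.

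The missing ingredient is assumption (A). The paper invokes Lemmas~1 and~3 of \cite{KaletaSztonyk2017}, which derive from the convolution bound (A) precisely the tail inequality $f(u-R)\le c_0\,f(u)$ for $u\ge 3R$ and fixed $R\ge 1$; heuristically, restricting the convolution $\int f_1(|x-y|)f_1(|y|)\,dy$ to a small ball of $y$'s at distance about $1$ from the origin in the direction of $x$ produces a lower bound proportional to $f_1(|x|-1)$, so (A) forces $f(|x|-1)\le C f(|x|)$, and this iterates. Once this is available, your outline closes: for $s\le R$ and $r+s\ge 3R$ one has $r\ge r+s-R$, so $f(r)\le f(r+s-R)\le c_0\,f(r+s)$ since $f$ is nonincreasing. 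In short, the structure of your argument is correct, but the unbounded range genuinely needs (A) --- the one hypothesis your proposal never actually uses.
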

\begin{proof}  
  If $r\leq 1$ then for $0<s\leq \frac{r}{2} < 1$ from \eqref{eq:doubling_kappa} we have
	$$
	  \frac{f(r)}{f(r+s)} \leq c_1 \left(\frac{r+s}{r}\right)^{d+\alpha_2} \leq c_1 \left( \frac{3}{2}\right)^{d+\alpha_2}.
	$$
  It follows from \cite[Lemmas 1 and 3]{KaletaSztonyk2017} that for every $R\geq 1$ there exists a constant $c_0$ such that we have $f(r+s-R) \leq c_0 f(r+s) $ provided
	$r+s \geq 3 R,$ and since $f$ is nonincreasing we obtain $f(r) \leq c_0 f(r+s)$ for $s\leq R,$ $r+s \geq 3R$ and \eqref{flocal_a}
	for $r\geq 3R.$ 
	For $r\in (1,3R)$ and $s\leq \frac{r}{2}$ we have $f(r) \leq f(1) = \frac{f(1)}{f(9R/2)} f(9R/2) \leq \frac{f(1)}{f(9R/2)} f(r+s).$
	This yields \eqref{flocal_a} for every $R\geq r_0$ with $C=\max\{c_0,M_2(3/2)^{d+\alpha},f(1)/f(9R/2)\}.$ 
	We note that if \eqref{flocal_a} holds for some $R>0$, (e.g., for $R=1$) then it holds with the same constant for every smaller $R.$
		
\end{proof}

We note that the condition $\alpha_2<d$ in (B) is not necessary also in the following collorary .

\begin{cor}\label{pleqp}
  There exists a constant $C$ such that
	\begin{displaymath}
		p(t,x+y) \leq C p(t,x), \quad t\in (0,t_0],\, x\in\Rd,\, |y|\leq \frac{1}{2}h(t).
	\end{displaymath}
\end{cor}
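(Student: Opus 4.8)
The plan is to read the bound off the sharp two-sided estimate \eqref{eq:ptx_est}, which gives $p(t,z)\approx h(t)^{-d}\wedge tf(|z|)$ for $t\in(0,t_0]$. Using that $\min(a,Mb)\leq M\min(a,b)$ for $M\geq1$, it then suffices to produce a fixed constant $M\geq1$ with $h(t)^{-d}\wedge tf(|x+y|)\leq M\bigl(h(t)^{-d}\wedge tf(|x|)\bigr)$ whenever $|y|\leq\tfrac12 h(t)$ and $t\in(0,t_0]$, after which \eqref{eq:ptx_est} applied both ways turns this into $p(t,x+y)\leq \tfrac{C_6 M}{C_5}p(t,x)$. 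The one genuine difficulty is that $|x+y|$ can be smaller than $|x|$, so a priori the ratio $f(|x+y|)/f(|x|)$ is unbounded (the profile $f$ may decay exponentially); this is exactly what Lemma~\ref{lem:flocal} is designed to control, at the cost of requiring the shift to be small relative to the base point. Because of this I would split according to whether $|x|$ is comparable to $h(t)$, taking the threshold $|x|=\tfrac32 h(t)$.

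For $|x|>\tfrac32 h(t)$ I would bound $|x+y|\geq|x|-\tfrac12 h(t)=:r$ and apply Lemma~\ref{lem:flocal} with this $r$ and $s=\tfrac12 h(t)$: the admissibility conditions hold because $s=\tfrac12 h(t)\leq\tfrac12 h(t_0)$ (as $t\leq t_0$, so we may take $R=\tfrac12 h(t_0)$) and $r>h(t)=2s$, so $s\leq r/2$. Monotonicity of $f$ and Lemma~\ref{lem:flocal} then give $f(|x+y|)\leq f(r)\leq C\,f(|x|)$ with $C=C(\tfrac12 h(t_0))$, and hence $h(t)^{-d}\wedge tf(|x+y|)\leq h(t)^{-d}\wedge C\,tf(|x|)\leq C\bigl(h(t)^{-d}\wedge tf(|x|)\bigr)$.

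For $|x|\leq\tfrac32 h(t)$ the estimate \eqref{eq:ptx_est} bounds $h(t)^{-d}\wedge tf(|x+y|)$ by $h(t)^{-d}$, so the task reduces to showing $tf(|x|)\gtrsim h(t)^{-d}$. When $|x|\leq h(t)$ this comes from comparing \eqref{est_p_small_x} and \eqref{est_p_large_x} at a point of norm exactly $h(t)$, which yields $tf(h(t))\gtrsim h(t)^{-d}$, together with $f(|x|)\geq f(h(t))$; when $h(t)<|x|\leq\tfrac32 h(t)$ one further passes from $f(h(t))$ to $f(\tfrac32 h(t))\leq f(|x|)$ by a single use of Lemma~\ref{lem:flocal} with $r=h(t)$, $s=\tfrac12 h(t)=r/2$ (again admissible since $s\leq\tfrac12 h(t_0)$). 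Assembling the two regimes with \eqref{eq:ptx_est} gives the corollary. I expect the only delicate point to be the bookkeeping of the admissibility hypotheses $s\leq R$ and $s\leq r/2$ of Lemma~\ref{lem:flocal} across the case split — in particular the reason the threshold must stay bounded away from $h(t)$; everything else is a direct consequence of Lemmas~\ref{p_t_est} and~\ref{lem:flocal}.
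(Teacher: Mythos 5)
Your proof is correct and follows essentially the same route as the paper: the same case split at $|x|=\tfrac32 h(t)$, the same invocation of Lemma~\ref{lem:flocal} with $s=\tfrac12 h(t)$ in the tail regime, and the same observation that $p(t,x)\gtrsim h(t)^{-d}$ when $|x|\lesssim h(t)$ via $tf(\tfrac32 h(t))\approx h(t)^{-d}$. Phrasing it as a bound on $h(t)^{-d}\wedge tf(\cdot)$ rather than on $p$ directly is only cosmetic.
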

\begin{proof}
  If $|x|\geq \frac{3}{2}h(t)$ then using Lemma \ref{p_t_est} and Lemma \ref{lem:flocal} we get
	\begin{displaymath}
		p(t,x+y) \leq c_1 t f(|x+y|) \leq c_1 t f(|x|-\tfrac{1}{2}h(t)) \leq c_2 t f(|x|) \leq c_3 p(t,x).
	\end{displaymath}
	For $|x|\leq \frac{3}{2}h(t),$ using $f(\frac{3}{2} h(t)) \approx f(h(t)) \approx t^{-1} h(t)^{-d}$ (see \eqref{f(h)}), we get
	\begin{displaymath}
		p(t,x+y) \leq c_4 h(t)^{-d}
	\end{displaymath}
	and
	\begin{displaymath}
		p(t,x) \geq c_5 (h(t)^{-d} \wedge tf(|x|) ) \geq c_5 \left(h(t)^{-d} \wedge tf\left(\tfrac{3}{2}h(t)\right) \right) \geq c_6 h(t)^{-d} \geq c_7 p(t,x+y).
	\end{displaymath}
\end{proof}

\begin{lem}\label{int_gq_est}
  If (A) and (B) hold 
	and $q\in {\cal J},$  then for every $r>0$
  there exists $C=C(r)$ such that 
          $$ 
            \int_{|y-z|\geq r, \atop |x-z| \geq r } f(|z-x|)|q(z)|f(|y-z|) \, dz \leq 
						C M_q f(|y-x|), \quad 
						|y-x|\geq 6 r,
          $$
					where $M_q=\sup_{x\in\Rd} \int_{|z-x|\leq 3r} |q(z)|\, dz$.
\end{lem}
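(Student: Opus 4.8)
The plan is to split the region of integration $\{|y-z| \geq r,\ |x-z| \geq r\}$ according to which of the two points $x,y$ is closer to $z$, and to use the condition (A) together with the local bound $M_q$ on $q$. Fix $y,x$ with $|y-x| \geq 6r$ and write $D = \{z : |y-z|\geq r,\ |x-z|\geq r\}$. Decompose $D = D_1 \cup D_2$ where $D_1 = D \cap \{|z-x| \leq |z-y|\}$ and $D_2 = D \cap \{|z-y| < |z-x|\}$; by the symmetry of the integrand in $x \leftrightarrow y$ it suffices to estimate the integral over $D_1$ and the $D_2$-piece follows by the same argument with the roles of $x$ and $y$ interchanged.

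On $D_1$ we have $|z-x| \leq |z-y|$, hence $|z-x| \leq \tfrac12(|z-x|+|z-y|)$ and by the triangle inequality $|z-y| \geq \tfrac12 |y-x| \geq 3r$. I would further split $D_1$ into $D_1' = D_1 \cap \{|z-x| \leq 3r\}$ and $D_1'' = D_1 \cap \{|z-x| > 3r\}$. On $D_1'$, since $|z-y| \geq |y-x| - |z-x| \geq \tfrac12|y-x|$ and $f$ is decreasing, $f(|y-z|) \leq f(\tfrac12|y-x|)$; also $f(|z-x|) \leq f(r)$ because $|z-x|\geq r$. Therefore
\[
 \int_{D_1'} f(|z-x|) |q(z)| f(|y-z|)\, dz \leq f(r)\, f\!\left(\tfrac{|y-x|}{2}\right) \int_{|z-x|\leq 3r} |q(z)|\, dz \leq f(r)\, M_q\, f\!\left(\tfrac{|y-x|}{2}\right),
\]
and then Lemma~\ref{lem:flocal} (with $s=\tfrac12|y-x|$, noting $|y-x|\geq 6r$ so $s \geq r$, and after a harmless dyadic iteration to pass from $f(\tfrac12|y-x|)$ to $f(|y-x|)$ using condition (B) on the bounded range and Lemma~\ref{lem:flocal} on the unbounded one) gives the bound $C M_q f(|y-x|)$. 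On $D_1''$ we have $|z-x| > 3r > 1$ once $r$ is not too small — but for small $r$ I instead rescale: note that replacing $f$ by $f_1(s)=\indyk{(1,\infty)}(s)f(s)$ only affects $s\leq 1$, and there one uses the local doubling (B) to absorb the difference into a constant $C(r)$. Then on $D_1''$, with $u = |y-x|\geq 6r$ replaced by $u/r \cdot r$, I would rescale the variable $z \mapsto rz$ so that the integral becomes one of the form $\int f_1(|rz - r\hat x|)\cdots$; more directly, since $|z-x|>3r\geq 1$ (for $r \geq 1/3$; the case of small $r$ is reduced to this by the rescaling built into (B) and Lemma~\ref{lem:flocal}), we have $f(|z-x|) \leq C(r) f_1(|z-x|)$ and similarly $f(|y-z|) \leq C(r) f_1(|y-z|)$, and using $|q(z)|\,dz \leq M_q$-controlled averaging over unit balls (covering $\Rd$ by unit balls, each carrying mass $\leq C M_q$) we reduce to
\[
 \int_{\Rd} f_1(|z-x|) f_1(|y-z|)\, dz \leq C_3\, f(|y-x|)
\]
by condition (A), valid since $|y-x|\geq 6r \geq 2$ (again, for small $r$ this threshold is reached after the rescaling).

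The main obstacle is the bookkeeping around the threshold "$1$" in condition (A) and in $f_1$: condition (A) and Lemma~\ref{lem:flocal} are clean only for arguments bounded below by a fixed constant, whereas here $r$ is arbitrary. I expect the cleanest route is: first prove the estimate for all $r \geq r_0$ (some fixed constant, say $r_0 = 1$) directly as above, and then for $r < r_0$ observe that the constraint set only shrinks while $|y-x|\geq 6r$ is weaker, so one covers $D$ by controlled pieces and applies the $r=r_0$ case together with the local comparability $f(s) \approx f(s')$ for $s,s' \in [r, r_0]$ coming from (B) — this is exactly the kind of reduction already used in the proof of Lemma~\ref{lem:flocal}. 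The dependence of $C$ on $r$ (through negative powers of $f(r)$ and the covering number $\sim r^{-d}$) is harmless since $r$ is fixed in the statement.
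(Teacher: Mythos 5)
Your decomposition is sensible in spirit, and the overall plan (isolate a neighbourhood of $x$, a neighbourhood of $y$, and a far region; use (A) on the far region) is close to the paper's, but two of your steps do not actually go through.

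\textbf{1. The bound on $D_1'$ is wrong.} You reduce to $f(r)\,M_q\,f(\tfrac{|y-x|}{2})$ and then want $f(\tfrac{|y-x|}{2})\le C f(|y-x|)$ (possibly after a "harmless dyadic iteration"). This is false uniformly in $|y-x|$: Lemma~\ref{lem:flocal} controls only \emph{bounded} shifts, i.e.\ $f(r')\le C(R)f(r'+s)$ for $s\le R\wedge r'/2$, and a dyadic chain from $u/2$ to $u$ needs a shift of size $\sim u$, so the constant blows up with $u$. Concretely, for the relativistic profile $f(r)\sim e^{-mr}r^{-\gamma}$ the ratio $f(u/2)/f(u)\sim e^{mu/2}\to\infty$. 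The fix is to not throw away the information $|z-x|\le 3r$: on $D_1'$ one has $|y-z|\ge |y-x|-3r$, a \emph{bounded} shift of $|y-x|$, and then Lemma~\ref{lem:flocal} with $R=3r$ gives $f(|y-z|)\le f(|y-x|-3r)\le C(3r)f(|y-x|)$. This is exactly what the paper does in its $I_2$ estimate.

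\textbf{2. The $D_1''$ part is not actually an argument yet.} You want to "flatten" $q$ into a constant, apply (A), and handle small $r$ by a "rescaling". Two problems. First, (A) is stated only for $|x|\ge 2$ and for the truncated profile $f_1=\indyk{(1,\infty)}f$; there is no scaling symmetry in (A) or in $f$ (the near-origin and far behaviours of $f$ are different, e.g.\ power vs.\ exponential), so you cannot "reach the threshold after rescaling". The paper instead first upgrades (A) to the statement $\nu_r*\nu_r(x)\le c_r\,\nu(x)$ for all $|x|\ge r$ (its \eqref{convcond_ext}), proved by a direct computation using (A), Lemma~\ref{lem:flocal}, and the local doubling from (B) for $r\le|x|\le 3$. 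Second, the flattening of $q$ is a concrete step, not an averaging slogan: the paper tiles $\R^d$ by cubes $K_j$ of diameter $r$, uses Lemma~\ref{lem:flocal} (with $R=r$) to replace the integrand over $K_j$ by its value at the centre $\theta_j$ up to a constant, pulls out $\int_{K_j}|q|\le M_q$, and then \emph{replaces the sum over cubes by a Riemann sum for the convolution integral} $\int_{|z-x|>r,\,|y-z|>r}f(|z-x|)f(|y-z|)\,dz$, which the upgraded convolution inequality bounds by $f(|y-x|)$. Your unit-ball covering suggestion cannot see this Riemann-sum structure; and since $M_q$ controls balls of radius $3r$ rather than $1$, even the covering-number bookkeeping is off for $r<1/3$.

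So the route you propose is essentially the paper's at the top level (far region via (A), near regions via a local oscillation bound and $M_q$), but both technical steps need to be replaced: use a bounded additive shift, not a multiplicative one, in the near regions; and establish and use the $\nu_r*\nu_r\le c_r\nu$ inequality together with a cube-by-cube Riemann-sum comparison, rather than an undefined rescaling, in the far region.
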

\begin{proof}
  If the inequality holds for some $r_1>0$ then it holds also for every $r>r_1$ with the same constant so 
	it is enough to prove the inequality for $r\in (0,1].$
	
	First we prove that for every $r\in (0,1]$ there 
	exist a constant $c_r>0$ such that 
  \begin{equation}\label{convcond_ext}
    \nu_{r} * \nu_{r}(x) \leq c_r \, \nu(x), \quad |x| \geq r.
  \end{equation}
	Indeed, it follows from (A) and Lemma \ref{lem:flocal} that for $|x|\geq 3$ we have
	\begin{align*}
    \nu_{r} * \nu_{r}(x)
		& \leq c_1 \int_{ \{1 \geq |y|\geq r\} \atop \cup\{ 1\geq |x-y| \geq r\} } f(|x-y|) f(|y|) \, dy 
		  + c_2 \nu(x) \\
		& \leq c_3 \left( f(|x|-1)f(r) + \nu(x)\right) \leq c_4 \nu(x).
	\end{align*}
  Now we observe that for $r\leq |x|\leq 3$ we have 
	$\tfrac{\nu_{r} * \nu_{r}(x)}{\nu(x)} \leq \tfrac{c_5f(r)\nu(B(0,r)^c)}{f(3)},$ which
	yields \eqref{convcond_ext}.
	
	Fix $r \in (0,1]$ and $x,y \in \R^d$ such that $|x-y| \geq 6r$.
  Let $\R^d = \bigcup_{j\in\N} K_j$ be a decomposition of $\Rd$ into cubes with diameter
	$r$.
	Let $I_1=\{j\in\N :\: K_j\cap B(x,2r) = \emptyset, K_j\cap B(y,2r) = \emptyset \}$, 
	$I_2= \N\setminus I_1$.
	Let $\theta_j$ denote the center of $K_j$.
	Using Lemma \ref{lem:flocal} with $R=r$, Corollary \ref{Cor_qint} and \eqref{convcond_ext} we obtain 
	\begin{align*}
	  \sum_{j\in I_1} \int_{K_j} f(|z-x|)|q(z)|f(|y-z|)  \, dz
		& \leq  \sum_{j\in I_1} c_6 f(|\theta_j-x|)f(|y-\theta_j|) \int_{K_j}  |q(z)| \, dz \\
		& \leq  M_q c_6 \sum_{j\in I_1}  f(|\theta_j-x|)f(|y-\theta_j|) \\
		& \leq  M_q c_6^2 \left(\tfrac{\sqrt{d}}{r}\right)^d \sum_{j\in I_1} \int_{K_j} f(|z-x|)f(|y-z|)\, dz \\
		& \leq  M_q c_6^2 \left(\tfrac{\sqrt{d}}{r}\right)^d \int_{|z-x|>r,|y-z|>r} f(|z-x|)f(|y-z|)\, dz \\
		& \leq  c_7 M_q f(|y-x|).
	\end{align*}
	Furthermore, using Lemma \ref{lem:flocal} again (with $R=3r$) we get
	\begin{align*}
	 & \sum_{j\in I_2} \int_{K_j \cap B(x,r)^c \cap B(y,r)^c} f(|z-x|) |q(z)| f(|y-z|) \, dz  
					& \\
		\leq  & \int_{r \leq |z-x|\leq 3r} f(|z-x|) |q(z)| f(|y-z|) \, dz 
		      +  \int_{r \leq |y-z|\leq 3r} f(|z-x|) |q(z)| f(|y-z|) \, dz  & \\
		\leq & f(r) \int_{r \leq |z-x|\leq 3r}  |q(z)| f(|y-z|) \, dz  
	       + f(r) \int_{r \leq |y-z|\leq 3r} f(|z-x|) |q(z)|  \, dz & \\
		\leq & c_8 f(r) f(|y-x|)\int_{|z-x|\leq 3r} |q(z)| \, dz 
		      + c_8 f(r) f(|y-x|)\int_{|y-z|\leq 3r} |q(z)|  \, dz & \\
	  \leq & 2 c_8 f(r) M_q f(|y-x|)
	\end{align*}
	and the lemma follows.
	
\end{proof}

We prove below a restricted version of 3G theorem for the transition function $p(t,x)$.

\begin{lem}\label{lem:3G} If (A) and (B) hold then for every $R\geq 1$ 
there exists a constant $C=C(R)$ such that
	$$
		p(t,x) \wedge p(s,y) \leq C p(t+s,x+y),\quad t,s>0,\, t+s \leq t_0,\, x,y\in B(0,R).
	$$
\end{lem}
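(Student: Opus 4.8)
The plan is to reduce everything to the sharp two-sided estimate \eqref{eq:ptx_est}, namely $p(t,x)\approx h(t)^{-d}\wedge tf(|x|)$ for $t\in(0,t_0]$, and to handle the ``far'' regime using the convolution-type bound on $f$ coming from (A). By symmetry in $(t,x)\leftrightarrow(s,y)$ we may assume $h(s)\le h(t)$ (so $s\le t$ since $h$ is increasing); note $t+s\le t_0$ forces $t\le t_0$, hence $h(t)\le h(t_0)$ is bounded, and the whole argument only ever sees $|x|,|y|\le R$. First I would split into two cases according to the size of $h(t)$ relative to $R$.

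\textbf{Case 1: $h(t)\ge R$ (small times are ``dominated'' by the diagonal).} Then $|x+y|\le 2R\le 2h(t)\le 2h(t+s)$ by monotonicity of $h$, and also $h(t+s)\approx h(t)$ by the doubling estimate \eqref{eq:doubling_h} applied with $R\mapsto t+s\le 2t$, $r\mapsto t$. By \eqref{est_p_small_x} (or \eqref{eq:ptx_est}) we get $p(t+s,x+y)\ge c\,h(t+s)^{-d}\ge c'\,h(t)^{-d}$, while $p(t,x)\wedge p(s,y)\le p(t,x)\le C\,h(t)^{-d}$ (using the global bound $p(t,x)\le c\,h(t)^{-d}$ quoted in the proof of Lemma \ref{p_t_est}). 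This settles the case.

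\textbf{Case 2: $h(t)<R$.} Here I distinguish by whether $h(s)$ is comparable to $h(t)$ or much smaller, but the cleanest route is: since $p(t,x)\wedge p(s,y)\le p(s,y)\le C s f(|y|)$ when $|y|\ge h(s)$, and $\le C h(s)^{-d}\le C h(s)^{-d}$ always, one writes $p(t,x)\wedge p(s,y)\le C\big(h(t)^{-d}\wedge tf(|x|)\big)\wedge\big(h(s)^{-d}\wedge sf(|y|)\big)$ using \eqref{eq:ptx_est}. A convenient observation is the elementary inequality $a_1\wedge a_2 \le \sqrt{a_1 a_2}$ for the terms that matter; more directly, I would use that $t f(|x|)\wedge s f(|y|)$ together with $f(|x|)f(|y|)\le c\,f(|x|)$ for $|y|\le R$ (monotonicity, $f$ bounded below by $f(R)>0$ on $(0,R]$) and $\max\{s,t\}\le t_0$. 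Concretely: if $|x+y|\le h(t+s)$ then $p(t+s,x+y)\ge c\,h(t+s)^{-d}\ge c\,h(t)^{-d}\ge c\,p(t,x)\wedge p(s,y)$ since $p(t,x)\le C h(t)^{-d}$; and if $|x+y|> h(t+s)$ then $p(t+s,x+y)\ge c\,(t+s)f(|x+y|)\ge c\,t f(|x+y|)$, so it remains to bound $p(t,x)\wedge p(s,y)$ by a constant times $t f(|x+y|)$. For this last point, since $|x+y|\le 2R$ and $f(|x+y|)\ge f(2R)>0$, while $p(t,x)\wedge p(s,y)\le C h(t)^{-d}\wedge C s f(|y|)\le C\min\{h(t_0)^{-d}, t_0 f(0+)\}$ — but $f$ need not be bounded near $0$, so instead I would use $p(t,x)\wedge p(s,y)\le p(t,x)$ and the bound $p(t,x)\le C(h(t)^{-d}\wedge t f(|x|))$: when $|x|\ge h(t)$ this is $\le C t f(|x|)\le C t f(\!\min\{|x|,|y|\}\!)$... this is where care is needed.

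\textbf{Main obstacle.} The genuinely delicate point is the regime where \emph{both} $x$ and $y$ are far from the origin relative to the respective time scales ($|x|\gtrsim h(t)$, $|y|\gtrsim h(s)$) yet $x+y$ may be small: then $p(t,x)\wedge p(s,y)\approx (tf(|x|))\wedge(sf(|y|))$ and we must dominate this by $C p(t+s,x+y)$. If $|x+y|\le h(t+s)$ we use $p(t+s,x+y)\approx h(t+s)^{-d}$ and need $(tf(|x|))\wedge(sf(|y|))\le C h(t+s)^{-d}$; this follows because $tf(|x|)\le t f(h(t)) \approx h(t)^{-d}\le h(t+s)^{-d}\cdot(\text{const})$ by \eqref{f(h)} and \eqref{eq:doubling_h} — here the restriction $|x|\ge h(t)$ is used via monotonicity of $f$. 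If instead $|x+y|>h(t+s)$, then $p(t+s,x+y)\approx (t+s)f(|x+y|)\ge t f(|x+y|)\ge t f(2R)$, and since everything is confined to $B(0,R)$ with $f(2R)$ a fixed positive constant and $t\ge \tfrac12(t+s)\ge$ (no lower bound!) — so we cannot just absorb into constants. The fix: bound $p(t,x)\wedge p(s,y)\le p(t,x)\le C(h(t)^{-d}\wedge t f(|x|))\le C t f(|x|)\le C t f(c\,|x+y|)$ is false in general since $|x|$ can exceed $|x+y|$. Instead use $f(|x|)\le f(|x+y|-|y|)$... wrong direction. The correct device is Lemma \ref{lem:flocal}/\eqref{flocal_a}: choose which of $|x|,|y|$ is comparable to $|x+y|$; since $|x+y|>h(t+s)$ and $|y|\le R$, if $|x|\le 2|x+y|$ we get $f(|x|)\ge c f(2|x+y|)$... still wrong direction. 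Ultimately I expect the clean argument to be: \emph{reduce to $x+y$ playing the role of a single point} by writing $p(t+s,x+y)=\int p(t,x+z)p(s,y-z)\,dz\ge \int_{|z|\le h(s)/2} p(t,x+z)p(s,y-z)\,dz \ge c\, p(t,x) \int_{|z|\le h(s)/2} p(s,y-z)\,dz$ using Corollary \ref{pleqp} (which needs $|z|\le \tfrac12 h(t)$, guaranteed since $h(s)\le h(t)$), and then bound the remaining integral of $p(s,\cdot)$ over $B(y,h(s)/2)\supset$ a set of measure $\approx h(s)^d$ on which $p(s,\cdot)\ge c h(s)^{-d}$ when $|y|\lesssim h(s)$, giving $\ge c$; when $|y|\gg h(s)$ use $p(s,y-z)\approx s f(|y|)$ on that ball. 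Symmetrically, $p(t+s,x+y)\ge c\,p(s,y)\cdot(\text{mass of }p(t,\cdot)\text{ near }x)$, and combining the two symmetric bounds via $a\wedge b\le \sqrt{ab}$ gives $p(t+s,x+y)\ge c\,\sqrt{p(t,x)p(s,y)}\ge c\,(p(t,x)\wedge p(s,y))$ — but one must check the two ``mass'' factors multiply to something bounded below, which is exactly where $|x|,|y|\in B(0,R)$ and $t+s\le t_0$ (so $h(t),h(s)$ bounded) are used. I would carry out this convolution-lower-bound approach as the cleanest path, with Case 1 and the near-diagonal subcase of Case 2 dispatched directly as above.
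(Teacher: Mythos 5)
Your convolution-lower-bound plan is a genuinely different route from the paper's, and it has a real gap at the step you yourself flag as needing a ``check'': the two mass factors do \emph{not} multiply to something bounded below. Writing $m_1=\int_{|z|\le h(s)/2}p(s,y-z)\,dz$ and $m_2=\int_{|z|\le h(s)/2}p(t,x+z)\,dz$, Corollary~\ref{pleqp} gives $m_1\approx p(s,y)\,h(s)^d$ and $m_2\approx p(t,x)\,h(s)^d$. Now fix $t=t_0$ and $|y|=R$ and let $s\to0$: then $p(s,y)\approx sf(R)\to0$ and $h(s)\to0$, so both $m_1$ and $m_2$ vanish. The restrictions $x,y\in B(0,R)$, $t+s\le t_0$ bound $h(t)$ and $h(s)$ from \emph{above}, whereas the mass factors need lower bounds, and these fail precisely when $s/t$ is small. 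Dropping the lossy $a\wedge b\le\sqrt{ab}$ step does not rescue it: assuming $p(s,y)\le p(t,x)$ you would need $m_1\gtrsim 1$, i.e.\ $p(s,y)h(s)^d\gtrsim1$, which fails in the same limit. So the far regime (both $|x|\gtrsim h(t)$, $|y|\gtrsim h(s)$, and $|x+y|>h(t+s)$) --- which you correctly identify as the crux --- is not closed.

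The paper's resolution is the elementary min/max observation you circle around but never land on. By \eqref{eq:ptx_est},
$p(t,x)\wedge p(s,y)\le c\bigl[h(t)^{-d}\wedge tf(|x|)\bigr]\wedge\bigl[h(s)^{-d}\wedge sf(|y|)\bigr]\le c\,h(t\vee s)^{-d}\wedge\bigl[(t\vee s)f(|x|\vee|y|)\bigr]$,
since the minimum of the four numbers is dominated by $h(t)^{-d}\wedge h(s)^{-d}=h(t\vee s)^{-d}$ and by $tf(|x|)\wedge sf(|y|)\le (t\vee s)f(|x|\vee|y|)$. Now $t\vee s\ge(t+s)/2$ and $|x|\vee|y|\ge(|x|+|y|)/2\ge|x+y|/2$, so by the doubling \eqref{eq:doubling_h} for $h$ and monotonicity of $f$ the right-hand side is at most a constant times $h(t+s)^{-d}\wedge\bigl[(t+s)f((|x|+|y|)/2)\bigr]$. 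It then only remains to show $f((|x|+|y|)/2)\le C(R)\,f(|x+y|)$: if $|x|+|y|\le 2$ this is \eqref{eq:doubling_kappa}; otherwise one of $|x|,|y|\ge1$, so $f((|x|+|y|)/2)\le f(1/2)\le\tfrac{f(1/2)}{f(2R)}\,f(|x+y|)$ since $|x+y|\le 2R$. This needs no convolution identity and no $\sqrt{ab}$ trick, and it subsumes your Case~1 as well. Your Case~1 and near-diagonal subcase were fine; the missing idea is the monotonicity bound $tf(|x|)\wedge sf(|y|)\le(t+s)f(|x+y|/2)$, which is exactly what closes the argument.
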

\begin{proof}
Using \eqref{eq:ptx_est} and Lemma \ref{lem:doubling_h} we get
  \begin{align*}
	  p(t,x) \wedge p(s,y) 
		& \leq c_1 \left[ h(t)^{-d}\wedge (t f(|x|) )\right] \wedge \left[h(s)^{-d} \wedge (sf(|y|))\right] \\
		& \leq c_1\, h(t\vee s)^{-d} \wedge \left[ (t\vee s) f(|x|\vee |y|)\right] \\
		& \leq c_1\, h\left(\tfrac{t+s}{2}\right)^{-d} \wedge \left((t+s)f\left(\tfrac{|x|+|y|}{2}\right)\right) \\
		& \leq c_1\left(c_2 2^{\frac{1}{\alpha_1}}\right)^d\, h\left(t+s\right)^{-d} \wedge \left((t+s)f\left(\tfrac{|x|+|y|}{2}\right)\right)
	\end{align*}
	If $x,y\in B(0,2)$ then using 
	\eqref{eq:doubling_kappa} 
	and the monotonicity of $f$ 
	we obtain
  \begin{displaymath}
	  f\left(\tfrac{|x|+|y|}{2}\right) \leq
		c_3 f(|x|+|y|) \leq c_3 f(|x+y|),
  \end{displaymath}	
	and the lemma follows from \eqref{eq:ptx_est}. If $1\leq |x| \leq R,$ or $1\leq |y| \leq R,$
	then 
	$$
	  f\left(\tfrac{|x|+|y|}{2}\right) \leq f\left(\tfrac{1}{2}\right) \leq \tfrac{f\left(\tfrac{1}{2}\right)}{f(2R)} f(|x+y|).
	$$
\end{proof}
We note that if $f$ has doubling property, i.e., $f(s)\leq c f(2s)$ for all $s>0$ 
then the above 3G inequality holds for all $x,y\in\Rd$ and the following theorem can be proved easily.

\begin{thm} \label{th:VeryIT}
  If (A) and (B) hold 
	and $q\in\KatoC,$ then there exists $\beta$
	such that for every $\eta\in (0,1)$
	there exists $t_\eta>0$ such that 
  \begin{equation}\label{BCond}
    \int_0^t \int_{\Rd} p(s,z-x) |q(z)| p(t-s,y-z)\, dzds \leq  (\eta+\beta t)  \, p(t,y-x), \
  \end{equation}
	for all $ x,y\in\Rd,\,  t\in (0,t_\eta].$ 
\end{thm}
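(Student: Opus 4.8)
The plan is to split $\R^d$ in the variable $z$ into a bounded neighbourhood of $\{x,y\}$, on which the \emph{local} 3G inequality of Lemma~\ref{lem:3G} applies and the contribution is absorbed by a Kato quantity, and a complementary ``far'' region, on which I would use only the pointwise bound $p(\tau,w)\le C\,\tau f(|w|)$ from \eqref{eq:ptx_est} together with the convolution estimate of Lemma~\ref{int_gq_est}. The key preliminary observation is that, applying Lemma~\ref{conv_st_estimate} with $a=1$, $\kappa=1$ and $r=h(t)$ -- so that $t\,\Psi(1/h(t))=1$ -- the quantity
\[
  \varepsilon(t)\;:=\;\sup_{x\in\R^d}\int_{\R^d}|q(z)|\,V_t^1(z-x)\,dz\;\le\;C\,I_{h(t)}^1(q)
\]
is finite for $t<t_0$ and tends to $0$ as $t\to0^+$, because $q\in\KatoC=\KatoC_1$ forces $I_r^1(q)\to0$ as $r\to0$ by \eqref{eq:KatoTG}. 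This gives a \emph{global} (not merely local) smallness of the space--time potential of $|q|$, and it is this that makes the near-diagonal estimates work without any auxiliary small-radius cut-off. Throughout I write $u=y-x$ and use that $p(t,\cdot)$ is even; I denote by $J(t,x,y)$ the left-hand side of \eqref{BCond}.

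First I would treat the case $|u|\ge6$. For $t$ small enough that $h(t)<6$ one has $p(t,u)\approx t f(|u|)$ by \eqref{est_p_large_x}, in particular $f(|u|)\le C\,p(t,u)/t$. Bounding $J$ by the sum of the integrals over $D_1=\{|z-x|<1\}$, $D_2=\{|z-y|<1\}$ and $D_3=\{|z-x|\ge1,\ |z-y|\ge1\}$: on $D_3$, using $p(s,z-x)\le C\,s f(|z-x|)$ and $p(t-s,y-z)\le C\,(t-s)f(|z-y|)$ from \eqref{eq:ptx_est}, integrating out $s$ (which yields a factor $t^{3}/6$), and then applying Lemma~\ref{int_gq_est} with $r=1$ (legitimate since $|u|\ge6$) together with \eqref{int_q_bd}, the $D_3$-contribution is $\le C\,t^{3}M_q f(|u|)\le C\,M_q\,t_0\, t\,p(t,u)$. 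On $D_1$ one has $|z-y|\in[|u|-1,|u|+1]$, hence $f(|z-y|)\le C\,f(|u|)$ by Lemma~\ref{lem:flocal} (applied with $R=1$, using $|u|\ge3$), so $p(t-s,y-z)\le C\,t f(|u|)\le C\,p(t,u)$ uniformly in $s$; pulling this factor out leaves $\int_0^t\int_{D_1}p(s,z-x)|q(z)|\,dz\,ds\le\varepsilon(t)$, and the $D_1$-contribution is $\le C\,\varepsilon(t)\,p(t,u)$. The set $D_2$ is symmetric: one pulls out $p(s,z-x)\le C\,s f(|z-x|)\le C\,p(t,u)$ (uniformly in $s$) and is left with $\int_{\R^d}|q(z)|V_t^1(y-z)\,dz\le\varepsilon(t)$.

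For $|u|<6$ I would split $z$ into the near part $N=\{|z-x|<12\}$ and the far part $F=\{|z-x|\ge12\}$. On $N$ both $z-x$ and $y-z$ lie in $B(0,18)$, so Lemma~\ref{lem:3G} with $R=18$ gives $p(s,z-x)\wedge p(t-s,y-z)\le C\,p(t,u)$; using $ab=(a\wedge b)(a\vee b)\le(a\wedge b)(a+b)$ for $a,b\ge0$ and integrating in $(s,z)$, the $N$-contribution is at most
\[
  C\,p(t,u)\Big(\int_{\R^d}|q(z)|V_t^1(z-x)\,dz+\int_{\R^d}|q(z)|V_t^1(y-z)\,dz\Big)\;\le\;2C\,\varepsilon(t)\,p(t,u).
\]
On $F$ one has $|z-y|\ge6$, so $p(s,z-x)\le C\,s f(|z-x|)$ and $p(t-s,y-z)\le C\,(t-s)f(6)$; decomposing $\{|z-x|\ge12\}$ dyadically and using \eqref{int_q_bd} together with the integrability of $\nu$ at infinity (which, since $\nu\approx f$, gives $\sum_{k\ge0}2^{kd}f(12\cdot2^{k})<\infty$) one obtains $\int_{|z-x|\ge12}f(|z-x|)|q(z)|\,dz\le\Lambda_q<\infty$; integrating in $s$, the $F$-contribution is $\le C\,\Lambda_q f(6)\,t^{3}$, and since $p(t,u)\ge c\,t f(6)$ for $|u|<6$ and $t$ small (from \eqref{eq:ptx_est}, as $h(t)^{-d}\ge t f(6)$ for small $t$), this is $\le C\,t_0\, t\,p(t,u)$.

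In both cases $J(t,x,y)$ is therefore bounded by a sum of finitely many terms of the form $C\,\varepsilon(t)\,p(t,u)$ and finitely many terms of the form $\beta'\,t\,p(t,u)$, with each $\beta'$ a fixed structural constant. I would then let $\beta$ be the maximum of these $\beta'$, and, given $\eta\in(0,1)$, choose $t_\eta\in(0,t_0)$ small enough that $C\,\varepsilon(t)\le\eta$ (with $C$ the relevant fixed constant) and that the elementary smallness facts about $h(t)$ used above hold for $t\le t_\eta$; this yields \eqref{BCond}. I expect the main obstacle to be exactly the absence of a global 3G inequality: the near-diagonal estimate works only on bounded configurations, so the real work is to show that the complementary far contribution is genuinely of lower order -- which for $|u|$ large is precisely the content of Lemma~\ref{int_gq_est}, and for $|u|$ bounded reduces to the integrability of $\nu$ at infinity together with the lower bound $p(t,u)\approx t f(|u|)\wedge h(t)^{-d}$ of \eqref{eq:ptx_est}.
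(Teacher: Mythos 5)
Your proof is correct and follows essentially the same strategy as the paper's: split according to whether $|y-x|$ is large or small, use the local 3G inequality (Lemma~\ref{lem:3G}) plus the space--time Kato smallness from Lemma~\ref{conv_st_estimate} for the ``near'' contributions, and use the far-field estimate $p(\tau,w)\le C\tau f(|w|)$ together with Lemma~\ref{int_gq_est} for the ``far--far'' contribution. The only differences are organizational: you fix absolute thresholds $(1,6,12,18)$ rather than the paper's $h(t_0)$-scaled ones, define $\varepsilon(t)$ upfront with $r=h(t)$ instead of keeping $r$ free until the last line, and use a disjoint rather than overlapping decomposition of the $z$-integral -- none of which changes the substance of the argument.
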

\begin{proof}
	We consider first the case $|x-y| > 6 h(t_0).$
	Denote the above double integral by $J(x,y,t):= J$. We have
	\begin{align*}
	  J 
		& =   \int_0^t \int_{|z-x|> h(t_0), \atop |z-y|> h(t_0)} 
		      + \int_0^t \int_{|y-z|\leq h(t_0)}
		      + \int_0^t \int_{|z-x|\leq h(t_0)} \\
		& =:  J_1 + J_2 + J_3, 
	\end{align*}
	and note that by symmetry of $p$ we have $J_2(x,y)=J_3(y,x).$
	Using Lemma \ref{p_t_est} and Lemma \ref{int_gq_est} for every $t\in (0,t_0]$ we obtain
	\begin{align*}
	  J_1 
		&   =   \int_0^t \int_{|z-x|>h(t_0), \atop |z-y|> h(t_0)} p(s,z-x) |q(z)| p(t-s,y-z)\, dzds \\
		& \leq  c_1 \int_0^t s(t-s)\, ds \cdot 
		        \int_{|z-x|> h(t_0), \atop |z-y|>h(t_0)} f(|z-x|)|q(z)|f(|y-z|)\, dz \\
		&   =   c_1 \frac{t^2}{6} t \int_{|z-x|> h(t_0), \atop |z-y|>h(t_0)} 
		         f(|z-x|)|q(z)|f(|y-z|) \, dz \\
		& \leq  c_2 \frac{t^2}{6} t f(|y-x|) \leq c_3 t_0 t p(t,y-x).
	\end{align*}
	From Lemma \ref{lem:flocal} with $R= h(t_0)$ and Lemma \ref{conv_st_estimate} 
	for every $r\in (0,t_0]$ we get
	 \begin{align*}
	  J_2(x,y) = J_3(y,x) 
		&   =   \int_0^t \int_{|z-y|\leq h(t_0)} p(s,z-x) |q(z)| p(t-s,y-z)\, dzds \\
		& \leq  c_4 t \int_0^t \int_{|z-y|\leq h(t_0)} f(|x-z|)|q(z)|p(t-s,y-z)\, dzds \\
		& \leq  c_5 t f(|y-x|) \int_0^t \int_{\Rd} |q(z)|p(s,y-z)\, dz ds\\
		& \leq  c_6 (1+c_7 t\Psi(\tfrac{1}{r}))I_r(q) p(t,y-x).
	\end{align*}
	
	Let now $|x-y|<6h(t_0)$. We use the same notation $J$ as above. We have
	  \begin{eqnarray*}
	  J 
		& \leq  & \int_0^t \int_{|x-z|< h(t_0), \atop |z-y|< h(t_0)} 
		      + \int_0^t \int_{|x-z|\geq  h(t_0)}
		      + \int_0^t \int_{|z-y|\geq h(t_0)}  \\
		& =: & I_1 + I_2 + I_3,
	\end{eqnarray*}
	and note that $I_2(x,y)=I_3(y,x).$
	From Lemma \ref{lem:3G} and Lemma \ref{conv_st_estimate} we get
  \begin{align*}
	  I_1 
		&   =   \int_0^t \int_{|z-x| <  h(t_0), \atop|y-z| < h(t_0)} 
		    p(s,z-x) |q(z)| p(t-s,y-z)\, dzds \\
		& \leq  \int_0^t c_8 \, p(t,y-x) \int_{|z-x| <  h(t_0), \atop |y-z| <  h(t_0)} 
		    |q(z)| (p(s,z-x)+  p(t-s,y-z))\, dzds \\
    & \leq  c_8 \, p(t,y-x) \int_0^t \int_{\Rd} |q(z)| (p(s,z-x)+  p(s,y-z))\, dz \\
		& \leq  c_9 (1+c_{10}\,t\Psi(\tfrac{1}{r}))\, I_r(q) p(t,y-x).
	\end{align*}
	
	Furthermore,
	\begin{align*}
	  I_2(x,y)=I_3(y,x) 
		&   =   \int_0^t \int_{|z-x|> h(t_0)} p(s,z-x) |q(z)| p(t-s,y-z)\, dzds \\
		& \leq  c_{11} t \int_0^t \int_{|z-x|>h(t_0)} f(|z-x|)|q(z)|p(t-s,y-z)\, dzds.
	\end{align*}
	Observe that $f(|z-x|)\leq c_{12} f(|y-x|)$ and $tf(|z-x|)\leq h(t)^{-d}$, since 
	$|z-x|> h(t_0)\geq h(t)$ (see \eqref{f(h)}). Hence, using Lemma \ref{p_t_est} and
	Lemma \ref{conv_st_estimate}
	again we get
	  \begin{eqnarray*}
		  I_2
			& \leq & c_{13} p(t,y-x) \int_0^t \int_{\Rd} |q(z)|p(t-s,y-z)\, dzds \\
			& \leq & c_{14} (1+c_{15}t\Psi(\tfrac{1}{r}))I_r(q) p(t,y-x).
		\end{eqnarray*}
	Now, taking $r=h(t)$ we get $t\Psi(\tfrac{1}{r})=1$ and the lemma follows 
	from the fact 
	that $\lim_{r\to 0^+}I_r(q)=0$ by \eqref{eq:KatoTG}.

\end{proof}

 It follows easily from \eqref{BCond} that $q$ is \textsl{relatively Kato} in 
the sense of \cite{BogdanHansenJakubowski2008}, i.e., for every $\eta>0$ there exists $h_{\eta}>0$ such that
\begin{equation}\label{eq:RelativelyKato}
  \int_0^t \int_{\Rd} p(s,z-x) |q(z)| p(t-s,y-z)\, dzds \leq \eta \, p(t,y-x),
\end{equation}
for $t\in (0,h_{\eta}).$
Consequently, Theorem 2 in \cite{BogdanHansenJakubowski2008}, and the fact that $p(t,x)>0$ (see \cite[Theorem 2]{KaletaSztonyk2015}) for all $t>0,x\in\Rd$ yield that there exists a unique transition density $\widetilde{p},$
such that 
\begin{equation}\label{pertp}
  \widetilde{p}(t,x,y) = p(t,y-x) + \int_0^t \int_{\Rd} p(u,z-x)q(z)\widetilde{p}(t-u,z,y)\, dz du,
\end{equation}
for $t>0,x,y\in\Rd.$
We have $\widetilde{p}(t,x,y)>0$ for all $t>0,x,y\in\Rd,$ and the Chapman-Kolmogorov equation
holds:
$$
  \int_{\Rd} \widetilde{p}(s,x,z)\widetilde{p}(t,z,y)\, dz = \widetilde{p}(s+t,x,y),\quad s,t>0, x,y\in\Rd.
$$
The density $\widetilde{p}$ is given by a series (see (10) and (23) in \cite{BogdanHansenJakubowski2008})
$$
  \widetilde{p}(t,x,y) = \sum_{n=0}^\infty p_n(t,x,y),
$$
where $p_0(t,x,y)=p(t,y-x),$ and
$$
  p_n(t,x,y) = \int_0^t \int_{\Rd} p_{n-1}(u,x,z)q(z)p(t-u,y-z) \, dzdu.
$$
Furthermore, it follows from 
  \cite[Lemma 5]{BogdanHansenJakubowski2008} that for every $\eta>0$ we have
\begin{equation}\label{eq:RelKato2}
  \int_0^t \int_{\Rd} p(s,z-x) |q(z)| p(t-s,y-z)\, dzds \leq \eta(1+(1/h_{\eta})t) \, p(t,y-x), 
\end{equation}
for all $t>0,\, x,y,\in\Rd,$
and by \cite[Theorem 3 and
(27)]{BogdanHansenJakubowski2008} for every natural number $n$ and $0<t<n h_\eta$ we get
\begin{equation*}
 (1-\eta)^n \leq \frac{\widetilde{p}(t,x,y)}{p(t,y-x)} \leq \frac{1}{1-\eta} \exp{\frac{\eta t}{h_{\eta}(1-\eta)}}, \quad x,y\in\Rd.
\end{equation*}

\section{Solution} \label{sec:solution}

The proof of the following lemma can be found (in more generall setting) in \cite[(1.16)]{BogdanButkoSzczypkowski2016}. 
\begin{lem}\label{FunSp}
For every $x\in\Rd,$ $s>0$ and $\phi\in C^\infty_c(\R\times \Rd)$ we have 
$$
  \int_0^\infty \int p(t,y-x)\left( \partial_t \phi(s+t,y) + \gener\phi(s+t,y) \right)\, dydt = -\phi(s,x).
$$
\end{lem}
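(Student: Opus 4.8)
The plan is to verify the identity by a direct computation using the probabilistic structure of $\{P_t\}$, together with the fact that $C_c^\infty(\Rd) \subset \domgen$ and $\tilde\gener\varphi = \gener\varphi$ on $C_c^2(\Rd)$ (recalled in Section 2). Fix $x\in\Rd$, $s>0$, and $\phi\in C_c^\infty(\R\times\Rd)$. For $t\ge 0$ write $\psi_t(y) := \phi(s+t,y)$, so that each $\psi_t\in C_c^\infty(\Rd)$, and consider the function $u(t) := P_t\psi_t(x) = \int p(t,y-x)\,\phi(s+t,y)\,dy$ for $t\ge 0$. Since $\phi$ has compact support in the time variable, $\psi_t \equiv 0$ for $t$ large, hence $u(t) = 0$ for all large $t$; and $u(0) = \psi_0(x) = \phi(s,x)$. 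Thus $-\phi(s,x) = u(\infty) - u(0) = \int_0^\infty u'(t)\,dt$, provided we justify that $u$ is $C^1$ and compute its derivative. This reduces the lemma to showing
\begin{equation*}
  u'(t) = \int p(t,y-x)\bigl(\partial_t\phi(s+t,y) + \gener\phi(s+t,y)\bigr)\,dy, \quad t>0,
\end{equation*}
and then integrating in $t$ over $(0,\infty)$.

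The derivative splits into two contributions by the product/semigroup structure: the time-derivative landing on the $t$-slot of $\phi$, which yields $P_t(\partial_t\psi_t)(x) = \int p(t,y-x)\,\partial_t\phi(s+t,y)\,dy$; and the time-derivative landing on the semigroup, which yields $\tilde\gener(P_t\psi_t)(x)$. For the latter I would use the standard semigroup identity $\frac{d}{dr}\big|_{r=0} P_{t+r}\psi_t = \tilde\gener P_t\psi_t = P_t \tilde\gener\psi_t$, the last equality because $\psi_t\in C_c^\infty(\Rd)\subset\domgen$ and the generator commutes with the semigroup on its domain. Since $\tilde\gener\psi_t = \gener\psi_t = \gener\phi(s+t,\cdot)$ on $C_c^2$, this contributes $\int p(t,y-x)\,\gener\phi(s+t,y)\,dy$, as desired. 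To make the splitting rigorous one writes the difference quotient $\frac{1}{r}\bigl(P_{t+r}\psi_{t+r}(x) - P_t\psi_t(x)\bigr)$, adds and subtracts $P_{t+r}\psi_t(x)$, and controls the two resulting terms: the first by uniform continuity of $P_{t+r}\left(\frac{\psi_{t+r}-\psi_t}{r}\right)$ towards $P_t(\partial_t\psi_t)$ (using boundedness of $P_{t+r}$ on $C_\infty$ and $C_c$-convergence of the difference quotients of $\phi$), the second by the generator limit above.

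The main obstacle I anticipate is purely the bookkeeping of differentiation under the integral sign and the interchange of limits: one must check that $t\mapsto u(t)$ is genuinely $C^1$ on $(0,\infty)$ with an integrable derivative, uniformly enough to integrate over all of $(0,\infty)$, and that the generator limit for $P_t\psi_t$ holds in the pointwise (in $x$) sense needed here. All of this follows from the smoothness and compact support of $\phi$ (so $\partial_t\phi(s+t,\cdot)$ and $\gener\phi(s+t,\cdot)$ are bounded, compactly supported, and jointly continuous, vanishing for large $t$), the contractivity of $\{P_t\}$ on $C_\infty(\Rd)$, and the mapping properties $C_c^2(\Rd)\subset\domgen$ with $\tilde\gener = \gener$ there. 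Since, as the excerpt notes, this computation is carried out in a more general setting in \cite[(1.16)]{BogdanButkoSzczypkowski2016}, I would present the argument concisely and refer there for the routine estimates, emphasizing only the reduction to $-\phi(s,x) = \int_0^\infty u'(t)\,dt$ and the identification of $u'(t)$ above.
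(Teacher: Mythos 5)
Your argument is correct and is the standard one: reduce to the fundamental theorem of calculus for $u(t) = P_t\bigl(\phi(s+t,\cdot)\bigr)(x)$, show $u(0)=\phi(s,x)$ and $u\equiv 0$ for large $t$, and identify $u'(t)$ via the add-and-subtract trick together with the semigroup's Kolmogorov backward equation, using $C_c^\infty(\Rd)\subset\domgen$ and $\tilde\gener=\gener$ there. The paper itself gives no proof of Lemma~\ref{FunSp} and simply defers to \cite[(1.16)]{BogdanButkoSzczypkowski2016}, which runs this same argument, so your proposal matches the intended route.
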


\begin{prop}\label{FundS}  If $q\in {\cal J}$, (A) and (B) hold 
then
for every $x\in\Rd,$ $s>0$ and  $\phi\in C^\infty_c(\R\times \Rd)$ we have
\begin{equation*}
  \int_0^\infty \int_{\Rd} \widetilde{p}(t,x,y)\left( \partial_t \phi(s+t,y) + \gener\phi(s+t,y) + q(y)\phi(s+t,y) \right)\, dydt = -\phi(s,x). 
\end{equation*}
\end{prop}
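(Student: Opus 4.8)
The plan is to base the verification on the perturbation series $\widetilde p=\sum_{n\ge 0}p_n$ constructed in Section~\ref{sec:perturbation} and to reduce \eqref{eq:weak_sol} to a telescoping identity, with Lemma~\ref{FunSp} doing the analytic work at each level of the iteration. Fix $x\in\Rd$, $s>0$ and $\phi\in C_c^\infty(\R\times\Rd)$, and pick $T>0$ with $\phi(s+t,\cdot)\equiv 0$ for $t\ge T$, so that all time integrals below run effectively over $(0,T)$. For $n\ge 0$ I would put
\[
  S_n:=\int_0^\infty\int_{\Rd} p_n(t,x,y)\bigl(\partial_t\phi(s+t,y)+\gener\phi(s+t,y)\bigr)\,dy\,dt,
\]
and define $T_n$ by the same formula with $q(y)\phi(s+t,y)$ in place of $\partial_t\phi(s+t,y)+\gener\phi(s+t,y)$; the assertion of the proposition is then $\sum_{n\ge 0}(S_n+T_n)=-\phi(s,x)$.

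The heart of the argument is the pair of identities $S_0=-\phi(s,x)$ and $S_n=-T_{n-1}$ for $n\ge 1$. The first is Lemma~\ref{FunSp} itself, since $p_0(t,x,y)=p(t,y-x)$. For the second I would insert the defining relation $p_n(t,x,y)=\int_0^t\int_{\Rd}p_{n-1}(u,x,z)q(z)p(t-u,y-z)\,dz\,du$ into $S_n$, apply Fubini's theorem to move the $(u,z)$-integration to the outside, and substitute $r=t-u$; the inner integral that remains, $\int_0^\infty\int_{\Rd}p(r,y-z)\bigl(\partial_r\phi(s+u+r,y)+\gener\phi(s+u+r,y)\bigr)\,dy\,dr$, is exactly Lemma~\ref{FunSp} with $(x,s)$ replaced by $(z,s+u)$, hence equals $-\phi(s+u,z)$; integrating against $p_{n-1}(u,x,z)q(z)$ gives $S_n=-T_{n-1}$. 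Summing over $n=0,\dots,N$ the contributions $T_0,\dots,T_{N-1}$ cancel, and what remains is
\[
  \int_0^\infty\int_{\Rd}\Bigl(\textstyle\sum_{n=0}^{N}p_n(t,x,y)\Bigr)\bigl(\partial_t\phi(s+t,y)+\gener\phi(s+t,y)+q(y)\phi(s+t,y)\bigr)\,dy\,dt=-\phi(s,x)+T_N.
\]

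To conclude I would pass to the limit $N\to\infty$. Because $q$ may change sign, the $p_n$ need not be non-negative, so I would control them through $|p_n|\le\bar p_n$, where $\bar p_n\ge 0$ is the $n$-th term of the perturbation series built from $p$ and the potential $|q|$, which is relatively Kato by Theorem~\ref{th:VeryIT}; then $\sum_{n\le N}|p_n|\le\bar p$, and $\bar p(t,x,y)\le C_T\,p(t,y-x)$ for $t\in(0,T)$ by the analogue of \eqref{tildep_est} applied to $|q|$. Combining this with the facts that $\partial_t\phi(s+t,\cdot)\in C_c^\infty(\Rd)$ and $\gener\phi(s+t,\cdot)\in L^\infty(\Rd)$ uniformly in $t<T$, and that $q$ is locally integrable uniformly in translations by \eqref{int_q_bd}, one obtains a fixed $(t,y)$-integrable dominating function for the integrand on the left-hand side — here Lemma~\ref{p_t_est}, Lemma~\ref{V_small_x} and the Kato condition in the form \eqref{eq:KatoTG} are used, e.g.\ through $\int_{\Rd}V_T^1(y-x)|q(y)|\,dy<\infty$. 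Dominated convergence then turns the left-hand side into $\int_0^\infty\int_{\Rd}\widetilde p(t,x,y)\bigl(\partial_t\phi(s+t,y)+\gener\phi(s+t,y)+q(y)\phi(s+t,y)\bigr)\,dy\,dt$, while on the right $|T_N|\le\|\phi\|_\infty\int_0^T\int_{\Rd}\bar p_N(t,x,y)|q(y)|\,dy\,dt\to 0$, because by Tonelli $\sum_N\int_0^T\int_{\Rd}\bar p_N|q|=\int_0^T\int_{\Rd}\bar p\,|q|<\infty$. This gives the proposition.

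I expect the main obstacle to be not the telescoping — which is short once Lemma~\ref{FunSp} is available — but the integrability bookkeeping needed to legitimise the repeated applications of Fubini's theorem (both in deriving $S_n=-T_{n-1}$ and in verifying that each $S_n$ and $T_n$ is finite) and of dominated convergence in the limit $N\to\infty$. This requires marshalling, uniformly over the time window $(0,T)$, the heat-kernel estimates of Lemma~\ref{p_t_est}, the potential-kernel estimate of Lemma~\ref{V_small_x}, the relative-Kato bound \eqref{BCond}/\eqref{eq:RelKato2} together with its consequence $\bar p\le C_T\,p$, the uniform local integrability \eqref{int_q_bd} of $q$, and the uniform boundedness of $\gener\phi(s+t,\cdot)$; once these ingredients are in place every interchange of integrals and sums is justified and the computation closes.
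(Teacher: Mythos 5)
Your argument is correct, and it is in substance the same one the paper uses; the difference is one of delegation versus elaboration. The paper's proof is a short paragraph: it invokes Lemma~4 of \cite{BogdanJakubowskiSydor2012} and merely checks its hypotheses, namely that $\partial_t\phi+\gener\phi$ is bounded (via $\|\gener\varphi\|_\infty\le c\,(\int(1\wedge|y|^2)\nu(y)\,dy)\,\|\varphi\|_{(2)}$), that the relative Kato bound \eqref{eq:RelKato2} holds, and that Lemma~\ref{FunSp} is available. What you have written is essentially the proof of that cited lemma unpacked in this setting: the identity $S_0=-\phi(s,x)$ from Lemma~\ref{FunSp}, the telescoping $S_n=-T_{n-1}$ via Fubini and a shift of time in Lemma~\ref{FunSp}, the partial-sum identity $\sum_{n\le N}(S_n+T_n)=-\phi(s,x)+T_N$, and the passage $N\to\infty$ controlled by the majorant series $\bar p_n$ built from $|q|$, the bound $\bar p\le C_T\,p$ coming from \eqref{eq:RelKato2}/\eqref{tildep_est}, the boundedness of $\gener\phi$, and the uniform local integrability of $q$ from \eqref{int_q_bd}. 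Both routes rest on exactly the same three ingredients (Lemma~\ref{FunSp}, the relative Kato bound, boundedness of $\gener\phi$); the paper buys brevity by citing \cite{BogdanJakubowskiSydor2012}, while your version is self-contained and makes visible where each hypothesis is actually used, which is arguably more transparent. One small point worth stating explicitly if you write this up: when dominating the $q(y)\phi(s+t,y)$ term you need $\int_0^T\int_{\Rd}p(t,y-x)|q(y)|\ind{\mathrm{supp}\,\phi}(y)\,dy\,dt<\infty$, which indeed follows from $\bar p\le C_Tp$ together with \eqref{int_q_bd} (or from $I^1_r(q)<\infty$), as you indicate.
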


\begin{proof}
This is a strightforward conclusion from \cite[Lemma 4]{BogdanJakubowskiSydor2012}. We need only
to verify the assumptions.
It follows from the general semigroup theory (see, e.g., \cite[Chapter 2]{BoettcherSchillingWang2013}) that
for every $\varphi\in C^2_\infty(\Rd)$ we have $\gener\varphi\in C_\infty(\Rd)$
and $$
  \|\gener \varphi\|_\infty \leq c \left(\int_{\Rd\setminus\{ 0\}} \left(1\wedge |y|^2\right)\nu(y)\, dy\right) \|\varphi\|_{(2)}, 
$$
for some constant $c,$ where
\begin{displaymath}
  \|\varphi\|_{(2)}:= \sum_{0\leq |\beta|\leq 2} \sup_{x\in\Rd} | \partial^{\beta} \varphi(x) |.	
\end{displaymath}

Therefore $|\partial_t \phi(t,y) + \gener \phi(t,y)|$  is a function bounded by a constant depending only on $d,$ $\phi$ and $\nu$. This fact, \eqref{eq:RelKato2} and Lemma \ref{FunSp} are sufficient for
Lemma 4 in \cite{BogdanJakubowskiSydor2012} to hold and hence the proposition follows.

\end{proof}

\begin{proof}[Proof of Theorem \ref{MTh}]
First part of the proof follows
directly from \eqref{tildep_est} and Proposition \ref{FundS}. We need to prove only the continuity of $\widetilde{p}.$
First, using induction we prove that the functions $p_n$ are continuous for every $n\in\N$ and $t\in (0,t_0)$.
We recall that
$$
  p_n(t,x,y)  = \int_0^t \int_{\Rd} p_{n-1}(u,x,z)p(t-u,y-z) q(z)\, dzdu.
$$
It follows from Lemma 7. in \cite{BogdanHansenJakubowski2008} (applied to $|q|$) that
\begin{equation}\label{pn_est}
  |p_n(t,x,y)| \leq p(t,y-x) \sum_{k=0}^n \binom{n}{k} \frac{(\beta t)^k}{k!} \eta^{n-k}, \quad t>0, x,y\in\Rd,
\end{equation}
where $\beta=\eta/h_\eta$ and $\eta,h_\eta$ are constants given in \eqref{BCond}.
Let $\varepsilon\in (0,t/2)$ and
$$
  f_{t,\varepsilon}(x,y) = \int_\varepsilon^{t-\varepsilon} \int_{\Rd} p_{n-1}(u,x,z)p(t-u,y-z) q(z)\, dzdu.
$$
Let $x_0,y_0\in\Rd$ and $(x_k,y_k) \to (x_0,y_0)$ as $k\to \infty.$
From continuity of $p_{n-1}(u,\cdot,z)$ and $p(t-u,\cdot)$ it follows that
$$
  p_{n-1}(u,x_k,z)p(t-u,y_k-z) q(z) \to p_{n-1}(u,x_0,z)p(t-u,y_0-z) q(z), 
$$
and using \eqref{pn_est} and Collorally \ref{pleqp} we get 
$$
  | p_{n-1}(u,x_k,z)p(t-u,y_k-z) q(z) | \leq c_1  p(u,z-x_0)p(t-u,y_0-z) |q(z)|
$$
for every $z\in\Rd$ and $|x_k-x_0|\leq \frac{1}{2}h(\varepsilon),$ $|y_k-y_0|\leq \frac{1}{2}h(\varepsilon).$
Hence, by the dominated convergence theorem, the function $f_{t,\varepsilon}(x,y)$ is continuous on $\Rd\times \Rd.$

Furthermore,
\begin{align*}
  |f_{t,\varepsilon}(x,y) - p_n(t,x,y)| 
	& \leq c_2 \int_0^\varepsilon \int_{\Rd} p(u,z-x)p(t-u,y-z) |q(z)|\, dzdu \\
	& + c_3 \int_{t-\varepsilon}^t \int_{\Rd} p(u,z-x)p(t-u,y-z) |q(z)|\, dzdu \\
	& \leq c_4 h(t/2)^{-d} \sup_{x\in\Rd} \int_0^\varepsilon \int_{\Rd} p(u,z-x) |q(z)|\, dzdu
\end{align*}
so it follows from Lemma \ref{conv_st_estimate} and Corollary \ref{CorKato_Eq} 
that $f_{t,\varepsilon}(x,y) \to p_n(t,x,y)$ uniformly
in $(x,y),$ as $\varepsilon\to 0,$ hence $p_n(t,\cdot,\cdot)$ is continuous.

Using \eqref{pn_est} again we see that the series defining $\widetilde{p}(t,x,y)$ converges uniformly hence
$\widetilde{p}(t,x,y)$ is jointly continuous for every $t\in (0,t_0).$

If $t\geq t_0$ then there exists $m\in\N$ such that $t/m\in (0,t_0).$ Using the Champan--Kolmogorov formula we get
$$
  \widetilde{p}(t,x,y) = \int ... \int \widetilde{p}(t/m,x,y_{m-1})...\widetilde{p}(t/m,y_2,y_1)\widetilde{p}(t/m,y_1,y)\, dy_{m-1}...dy_1
$$
and the joint continuity of $\widetilde{p}(t,\cdot,\cdot)$ follows from the continuity of $\widetilde{p}(t/m,\cdot,\cdot)$
and the inequalities
$$
  \widetilde{p}(t/m,\tilde{x},y) \leq c_5 \widetilde{p}(t/m,x,y), \quad |\tilde{x} -x| \leq \frac{1}{2}h(t/m),  
$$ 
$$
  \widetilde{p}(t/m,x,\tilde{y}) \leq c_6 \widetilde{p}(t/m,x,y), \quad |\tilde{y} -y| \leq \frac{1}{2} h(t/m),
$$
which follow from \eqref{tildep_est} and Corollary \ref{pleqp}.
\end{proof}


\section{Regularity} 

First we prove a version of 4G inequality for the heat kernels $p(t,x).$ For $a>0$ we denote $g_a(t,x)=a^d p(t,ax)$.  We note that Proposition \ref{prop:4G} and Lemmas \ref{Hoelderineq}- \ref{lem_DiffEst} hold also
in the case $\alpha_2>1=d.$ 

\begin{prop}\label{prop:4G}
Let $0<a<b<\infty$. If  (A), (B) and (C) hold  
 then there exists $C=C(a,b)$ such that
$$
  g_b(t,x) g_a(s,y)\leq C \left[g_{b-a}(t,x)\vee g_a(s,y)\right]g_a(t+s,x+y),
$$
for all $t,s>0,\, t+s\leq t_0$, $x,y\in\Rd$.
\end{prop}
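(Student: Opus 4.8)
The plan is to replace $p$ by the model kernel of Lemma \ref{p_t_est}, reduce to the case $a=1$, and then split into one ``main'' case — where condition (C) is exactly what is needed — and several elementary ``boundary'' cases.

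\emph{Reduction and tools.} By Lemma \ref{p_t_est} we have $g_a(t,x)=a^dp(t,ax)\approx a^d\big(h(t)^{-d}\wedge tf(a|x|)\big)$, and by \eqref{f(h)} (extended to all $t\in(0,t_0]$, since $h$ maps $(0,t_0]$ into a compact subset of $(0,\infty)$ on which $f$ is comparable to a constant) this is comparable to $\mathcal G_a(t,x):=a^d\,tf(a|x|\vee h(t))$. So it suffices to prove the inequality with $g$ replaced by $\mathcal G$; and since $\mathcal G_c(t,\cdot/a)=a^d\mathcal G_{c/a}(t,\cdot)$, after renaming $b/a$ as $b$ we may assume $a=1<b$. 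We shall use freely: (i) $tf(|z|\vee h(t))\le tf(h(t))\approx h(t)^{-d}$; (ii) $h$ is increasing and satisfies \eqref{eq:doubling_h}, so $h(t+s)\approx h(t\vee s)$; (iii) $f(\lambda r)\approx f(r)$ whenever $r$ stays below a fixed multiple of $h(t_0)$ and $\lambda$ ranges over a fixed bounded set (use \eqref{eq:doubling_kappa} for $r$ small and Lemma \ref{lem:flocal} for $r$ bounded away from $0$); (iv) $\mathcal G_b(t,x)\le(b/(b-1))^d\,\mathcal G_{b-1}(t,x)$, because $b|x|\vee h(t)\ge(b-1)|x|\vee h(t)$ and $f$ is decreasing. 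From (iv) one also gets the elementary bound $\mathcal G_b\,\mathcal G_1\le (b/(b-1))^d\,\big(\mathcal G_b\wedge\mathcal G_1\big)\,\big(\mathcal G_{b-1}\vee\mathcal G_1\big)$.

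\emph{The main case.} Assume $b|x|\ge h(t)$, $|y|\ge h(s)$ and $|x+y|\ge h(t+s)$, so $\mathcal G_b(t,x)\approx b^d tf(b|x|)$, $\mathcal G_1(s,y)\approx sf(|y|)$, $\mathcal G_1(t+s,x+y)\approx(t+s)f(|x+y|)$ and $\mathcal G_{b-1}(t,x)\approx(b-1)^d tf\big((b-1)|x|\vee h(t)\big)$. Since $f$ is decreasing and $|x+y|\le|x|+|y|$, we have $f(|x+y|)\ge f(|x|+|y|)$; cancelling the factors $t,s$, the powers of $b,b-1$, and bounding $t+s\ge t\vee s$, the claim reduces to
\begin{equation*}
 f(b|x|)\,f(|y|)\le C\Big[f\big((b-1)|x|\vee h(t)\big)\vee f(|y|)\Big]\,f(|x|+|y|).
\end{equation*}
Put $r:=(b-1)|x|\vee h(t)$. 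If $r\le|y|$, then $f(r)\ge f(|y|)$ and (using (iii) to replace $f((b-1)|x|)$ by $f(r)$ when $(b-1)|x|<h(t)\le b|x|$) it is enough to show $f(b|x|)/f((b-1)|x|)\le C\,f(|x|+|y|)/f(|y|)$; writing $b|x|=(b-1)|x|+|x|$, $|x|+|y|=|y|+|x|$ and using $(b-1)|x|\le|y|$, this is exactly (C) with $\kappa=|x|$. If $r>|y|$, the bracket is $\ge f(|y|)$ and we only need $f(b|x|)\le Cf(|x|+|y|)$: either $(b-1)|x|>|y|$, whence $b|x|=|x|+(b-1)|x|>|x|+|y|$ and $f(b|x|)\le f(|x|+|y|)$ directly; or $|y|<h(t)$, which together with $b|x|\ge h(t)$ confines $|x|,|y|,|x|+|y|$ to a bounded range and the estimate follows from (iii) and Lemma \ref{lem:flocal}.

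\emph{The boundary cases.} If $|x+y|<h(t+s)$, then $\mathcal G_1(t+s,x+y)\approx h(t+s)^{-d}\approx h(t\vee s)^{-d}$; using $\mathcal G_b(t,x)\le b^dh(t)^{-d}$ and $\mathcal G_1(s,y)\le C h(s)^{-d}$ and distinguishing $t\ge s$ from $s>t$ gives $\mathcal G_b(t,x)\wedge\mathcal G_1(s,y)\le C\,\mathcal G_1(t+s,x+y)$, and the elementary bound from (iv) above finishes this case. If $|x+y|\ge h(t+s)$ but $b|x|<h(t)$ (the case $|y|<h(s)$ being symmetric), then $|x|<h(t)/b$ is small, so $|x+y|\approx|y|$ with $\big||x+y|-|y|\big|<h(t_0)/b$ bounded, hence $f(|x+y|)\approx f(|y|)$ by Lemma \ref{lem:flocal} and (iii); writing $\mathcal G_b(t,x)\approx b^dh(t)^{-d}$, $\mathcal G_{b-1}(t,x)\approx(b-1)^dh(t)^{-d}$, dominating $tf(b|x|\vee h(t))$ by $h(t)^{-d}$ via (i), taking the $\mathcal G_{b-1}(t,x)$ term in the bracket and using $s\le t+s$, one checks $\mathcal G_b(t,x)\mathcal G_1(s,y)\le C\,\mathcal G_{b-1}(t,x)\mathcal G_1(t+s,x+y)$. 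The only remaining configuration is $b|x|\ge h(t)$, $|y|\ge h(s)$, $|x+y|\ge h(t+s)$, i.e.\ the main case, so this exhausts all possibilities and the proposition follows with $C=C(a,b)$. The single non-routine point is the main case: for a rapidly decaying profile (say $f(r)=e^{-r}$) the ratios $f(b|x|)/f((b-1)|x|)$ and $f(|x|+|y|)/f(|y|)$ are both exponentially small and $x$ cannot be decoupled from $x+y$, so the whole argument hinges on $r\mapsto f(r+\kappa)/f(r)$ being non-decreasing up to a constant — which is precisely (C), automatic for log-convex $f$ or for $f$ obeying \eqref{eq:doubling_kappa} globally.
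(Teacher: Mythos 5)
Your proposal is essentially correct and takes a genuinely different route from the paper. The paper also reduces to the model kernel $h(t)^{-d}\wedge tf(a|x|)$ via Lemma~\ref{p_t_est} and uses the same two sub-arguments in the hard regime — condition~(C) with $\kappa=|x|$, $r=(b-a)|x|$, $s=|y|$ when $|y|\geq\frac{b-a}{a}|x|$, and pure monotonicity of $f$ when $|y|\leq\frac{b-a}{a}|x|$ — so the key step is identical. What differs is the case decomposition: the paper slices by the fixed threshold $h(t_0)/a$ (near-diagonal case handled by the off-the-shelf 3G inequality of Lemma~\ref{lem:3G}, then $a|x|\leq h(t_0)\leq a|y|$ and its mirror, then both $\geq h(t_0)/a$), whereas you slice by the time-dependent thresholds $b|x|\gtrless h(t)$, $|y|\gtrless h(s)$, $|x+y|\gtrless h(t+s)$ and handle the near-diagonal regime inline. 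Your decomposition also buys a clean reduction to $a=1$ via the rescaling $\mathcal G_c(t,\cdot/a)=a^d\mathcal G_{c/a}(t,\cdot)$, which the paper does not perform; the paper carries the two dilation parameters throughout.

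Two places should be tightened. First, in the boundary case $b|x|<h(t)$ (and its "symmetric" counterpart $|y|<h(s)$) your justification of $f(|x+y|)\approx f(|y|)$ via Lemma~\ref{lem:flocal} is not immediate when $b$ is close to $1$: Lemma~\ref{lem:flocal} requires the additive increment to be at most half the base point, but you only have the increment $|x|<h(t)/b<|x+y|/b$, and $1/b>1/2$ when $b<2$. This is repairable by iterating Lemma~\ref{lem:flocal} a bounded number of times (the increment is always a bounded fraction of the base, so a fixed number of half-steps reaches the target), or by invoking your tool~(iii) once you note $|y|$ and $|x+y|$ are multiplicatively comparable with ratio in $(b/(b+1),b/(b-1))$; but as written the step is short-circuited. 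Second, the remark that "$|y|<h(s)$ is symmetric" is imprecise: the bracket $\mathcal G_{b-1}(t,x)\vee\mathcal G_1(s,y)$ is not symmetric in the two slots, and in the case $|y|<h(s)$ the correct move is to pick the $\mathcal G_1(s,y)$ term rather than the $\mathcal G_{b-1}$ term (and then use $f(|x+y|)\approx f(|x|)\geq f(b|x|)$, $t\le t+s$); the argument is analogous but not identical to the $b|x|<h(t)$ case, and one should check it separately. Neither point is a conceptual gap — the structure of the proof is sound and the use of (C) is exactly the non-routine ingredient.
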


\begin{proof}
First we note that for every $ 0< \rho < \eta $ we have
\begin{align*}
  g_\eta(t,x) 
	& = \eta^d p(t,\eta x) \leq \eta^d c_1 \left(h(t)^{-d} \wedge tf(\eta |x|)\right)
	\leq \eta^d c_1 \left(h(t)^{-d} \wedge tf(\rho |x|)\right) \\
	& \leq \eta^d c_2 p(t,\rho x)
	= \tfrac{\eta^d c_2}{\rho^d} g_\rho(t,x), \quad x\in\Rd, t\in (0,t_0].
\end{align*}
For $0<t+s\leq t_0$ and $|x|,|y|\leq h(t_0) a^{-1},$ using the 3G inequality for $p(t,x)$ given
in Lemma \ref{lem:3G} we obtain
\begin{eqnarray*}
  g_b(t,x)g_a(s,y)
	&   =  & [g_b(t,x) \vee g_a(s,y)]\cdot [g_b(t,x) \wedge g_a(s,y)] \\
  & \leq & \tfrac{c_2 b^d}{(b-a)^{d}}    [g_{b-a}(t,x) \vee g_a(s,y)]\cdot  
	         \tfrac{c_2 b^d}{a^{d}}   [g_a(t,x) \wedge g_a(s,y)] \\
  & \leq & \tfrac{c_3 c_2^2 b^{2d}}{ a^d (b-a)^d }  [g_{b-a}(t,x) \vee g_a(s,y)] g_a(t+s,x+y).
\end{eqnarray*}
Now, let $a|x|\leq h(t_0) \leq a|y|$. Then from \eqref{est_p_large_x} and Lemma \ref{lem:flocal} (with $R= h(t_0)$) we get
\begin{align*}
  a^{-d}g_a(s,y) 
	& \leq c_4 s f(a|y|)\leq c_5  s f(a|y|+a|x|) \\
	& \leq c_6 h(t+s)^{-d} \wedge [(t+s) f(a|x+y|)],
\end{align*}
 since $h(t+s)\leq h(t_0)$. This implies 
 \begin{eqnarray*}
   g_b(t,x)g_a(s,y)
   & \leq & c_7 g_{b-a}(t,x)g_a(t+s,x+y).
\end{eqnarray*}
Similarly, for  $a|y|\leq h(t_0) \leq a|x|$ we have
$$
 a^{-d}g_a(t,x) \leq c_8 t f(a|x|)\leq 
c_9 h(t+s)^{-d} \wedge [(t+s) f(a|x+y|)]
$$ and this yields
\begin{eqnarray*}
  g_b(t,x)g_a(s,y)
	& \leq & \tfrac{c_2 b^d }{a^d} g_a(t,x)g_a(s,y)
	         \leq c_{10}  g_a(s,y) g_a(t+s,x+y).
\end{eqnarray*}
It remains to consider $a^{-1}h(t_0) \leq |x|,|y|$. In this case $a^{-d}g_a(s,y)\leq c_{11} s f(a|y|)$ and  $b^{-d}g_b(t,x)\leq c_{11} t f(b|x|)$.
At first we consider $|y|\leq \frac{b-a}{a}|x|$. This implies 
$$
  f(b|x|) \leq f(a(|x|+|y|)) \leq f(a|x+y|),
$$
since $f$ is nonincreasing. If $a|x+y|\geq h(t+s)$ then $t f(a|x+y|)\leq c_{12} p(t+s,a|x+y|)$
and if $a|x+y| \leq h(t+s)$ then 
$ b^{-d} g_b(t,x) \leq c_{13} t_0f(\tfrac{b}{a} h(t_0)) \leq c_{13} t_0f(\tfrac{b}{a} h(t_0)) h(t_0)^d h(t+s)^{-d} \leq c_{14} p(t+s,a|x+y|).$
Hence
\begin{eqnarray*}
  g_b(t,x)g_a(s,y)
	& \leq & c_{15} g_a(t+s,x+y)g_a(s,y).
\end{eqnarray*}

It follows from (C) that for $|y|\geq \frac{b-a}{a}|x|$ we have
$$
  f(b|x|)f(a|y|) 
	 \leq    c_{16} f((b-a)|x|)f(a(|x|+|y|)) \leq  c_{16} f((b-a)|x|)f(a|x+y|).
$$
We note that $(b-a)|x| \geq (b-a)h(t_0)/a,$ so $f((b-a)|x|)\leq c_{17} \leq c_{18} h(t)^{-d},$ hence
\begin{eqnarray*}
  g_b(t,x)g_a(s,y)
	& \leq & c_{19} g_a(t+s,x+y)g_{b-a}(t,x).
\end{eqnarray*}
\end{proof}

\begin{lem}\label{Hoelderineq} Assume (B). Then for every $a\in (1, 2]$ there exists a constant $C=C(a)$ 
such that 
  $$
	  \int_0^t \left(\frac{r}{h(t-s)}\wedge 1 \right) p(s,y)\, ds \leq
		C (\Psi(\tfrac{1}{r}))^{\frac{1}{a}-1} \left[ \int_0^t \left( p(s,y)\right)^{a} \, ds\right]^{\frac{1}{a}}, 
  $$
	and
	$$
	  \int_0^t \left(\frac{r}{h(t-s)}\wedge 1 \right) p(t-s,y)\, ds \leq
		C (\Psi(\tfrac{1}{r}))^{\frac{1}{a}-1} \left[ \int_0^t \left( p(s,y)\right)^{a} \, ds\right]^{\frac{1}{a}},
  $$
	for all $t\leq t_0, y\in\Rd$ and $r>0$.
\end{lem}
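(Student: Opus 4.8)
The plan is to reduce both displayed inequalities, via Hölder's inequality and a change of variables, to a single deterministic estimate for the weight $u\mapsto(r/h(u))\wedge 1$, and then to deduce that estimate from Lemma~\ref{integratingh}.

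Fix $a\in(1,2]$ and let $a':=a/(a-1)$ be the conjugate exponent, so that $\tfrac1a+\tfrac1{a'}=1$ and, importantly, $-\tfrac1{a'}=\tfrac1a-1$. Applying Hölder's inequality on $(0,t)$ with exponents $a'$ and $a$ to the integrand $\big(\tfrac{r}{h(t-s)}\wedge 1\big)\,p(s,y)$, and then the substitution $u=t-s$ in the resulting $a'$-integral, the left-hand side of the first inequality is at most
$$
  \left(\int_0^t \big(\tfrac{r}{h(u)}\wedge 1\big)^{a'}\,du\right)^{1/a'}\left(\int_0^t p(s,y)^{a}\,ds\right)^{1/a};
$$
for the second inequality the same bound holds after additionally noting $\int_0^t p(t-s,y)^{a}\,ds=\int_0^t p(u,y)^{a}\,du$. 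Hence it suffices to prove
\begin{equation}\label{HIkey}
  \int_0^t \big(\tfrac{r}{h(u)}\wedge 1\big)^{a'}\,du \le C\,\Psi(1/r)^{-1},\qquad t\in(0,t_0],\ r>0,
\end{equation}
since then the first factor above is at most $C^{1/a'}\Psi(1/r)^{-1/a'}=C^{1/a'}\Psi(1/r)^{\,1/a-1}$.

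To prove \eqref{HIkey} I distinguish two cases. If $r\ge h(t_0)$, then by monotonicity of $h$ the integrand equals $1$ on $(0,t_0]$, so the integral is $\le t\le t_0$; on the other hand $1/r\le 1/h(t_0)=\Psi_-(1/t_0)$ together with the monotonicity of $\Psi$ and $\Psi(\Psi_-(s))=s$ give $\Psi(1/r)\le 1/t_0$, i.e.\ $\Psi(1/r)^{-1}\ge t_0$, so \eqref{HIkey} holds with $C=1$. If $r<h(t_0)$, set $u_0:=1/\Psi(1/r)\in(0,\infty)$. When $u_0\ge t$ the integrand is bounded by $1$, so the integral is $\le t\le u_0=\Psi(1/r)^{-1}$. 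When $u_0<t$ I split the integral at $u_0$, bound the integrand by $1$ on $(0,u_0)$ and by $(r/h(u))^{a'}$ on $(u_0,t)$, and enlarge the second range to $(u_0,t_0)$:
$$
  \int_0^t \big(\tfrac{r}{h(u)}\wedge 1\big)^{a'}\,du \le u_0 + r^{a'}\int_{u_0}^{t_0} h(u)^{-a'}\,du.
$$
The first term is $\Psi(1/r)^{-1}$; to the second I apply \eqref{IU2} of Lemma~\ref{integratingh} with the parameters ``$a$''$=0$, ``$b$''$=a'$, ``$s$''$=r$, ``$T$''$=t_0$. This is legitimate because $r\in(0,h(t_0))$ and because the required condition $0-a'/\alpha_2+1<0$, i.e.\ $a'>\alpha_2$, holds: $a'=a/(a-1)\ge 2>\alpha_2$ for $a\in(1,2]$ by (B). Thus $r^{a'}\int_{u_0}^{t_0} h(u)^{-a'}\,du\le C_2\,\Psi(1/r)^{-1}$, and \eqref{HIkey} follows with $C=1+C_2$.

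The argument is routine once set up in this way; the only points deserving attention are the verification that the conjugate exponent satisfies $a'>\alpha_2$ — which is exactly where both $a\le 2$ and $\alpha_2<2$ are used, and which makes \eqref{IU2} applicable — and the bookkeeping that the power of $\Psi(1/r)$ produced by Hölder's inequality is precisely $-1/a'=1/a-1$.
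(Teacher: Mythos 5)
Your proof is correct and takes essentially the same route as the paper's: apply Hölder's inequality with exponents $a$ and $a'=a/(a-1)$, reduce both inequalities to a bound on $\int_0^t (r/h(u)\wedge 1)^{a'}\,du$, split that integral at $1/\Psi(1/r)$, and control the tail with \eqref{IU2} of Lemma~\ref{integratingh}, using $a'\ge 2>\alpha_2$ exactly as the paper does. Your case analysis (separating $r\ge h(t_0)$ and handling $u_0\ge t$ by a trivial bound) is slightly more explicit than the paper's remark ``if $r>h(t)$ then $t\le 1/\Psi(1/r)$,'' but the content is the same.
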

\begin{proof} 
	Let $b$ be such that $\tfrac{1}{a} + \tfrac{1}{b} = 1$ and $r<h(t).$ We have
	\begin{align*}
	   \int_0^t \left(\frac{r}{h(t-s)}\wedge 1 \right)^b \, ds
		 &  =  \int_0^{t-\frac{1}{\Psi(\frac{1}{r})}}r^b h(t-s)^{-b}\, ds 
		        + \int_{t-\frac{1}{\Psi(\frac{1}{r})}}^t \, ds \\
		 & =  r^b \int^t_{\frac{1}{\Psi(\frac{1}{r})}}h(s)^{-b}\, ds + \frac{1}{\Psi(\frac{1}{r})} 
		 \leq  r^b \int^{ t_0}_{\frac{1}{\Psi(\frac{1}{r})}} h(s)^{-b}\, ds + \frac{1}{\Psi(\frac{1}{r})},
	\end{align*}
	and from Lemma \ref{integratingh} we get 
	\begin{align*}
	   \int_0^t \left(\frac{r}{h(t-s)}\wedge 1 \right)^b \, ds
		 & \leq  c_1 r^b \frac{1}{\Psi(\frac{1}{r})} h\left( \frac{1}{\Psi(\frac{1}{r})}\right)^{-b} + \frac{1}{\Psi(\frac{1}{r})}, \\
		 & \leq  c_2 \frac{1}{\Psi(\frac{1}{r}) }
	\end{align*}
	provided $b>\alpha_2$, which follows from the fact that $\alpha_2<2$ and $a\leq 2$. If $r>h(t)$ then we just use the fact that in this case 
	$t\leq \frac{1}{\Psi(1/r)}.$
	From H\"older inequality we get
  \begin{eqnarray*}
	  \int_0^t \left(\frac{r}{h(t-s)}\wedge 1 \right) p(s,y) \, ds
		& \leq & \left[ \int_0^t \left( p(s,y)\right)^{a} \, ds\right]^{\frac{1}{a}}
		\left[ \int_0^t \left(\frac{r}{h(t-s)}\wedge 1 \right)^b \, ds \right]^{\frac{1}{b}}, 
	\end{eqnarray*}
	and
  \begin{eqnarray*}
	  \int_0^t \left(\frac{r}{h(t-s)}\wedge 1 \right) p(t-s,y) \, ds
		& \leq & \left[ \int_0^t \left( p(t-s,y)\right)^{a} \, ds\right]^{\frac{1}{a}}
		\left[ \int_0^t \left(\frac{r}{h(t-s)}\wedge 1 \right)^b \, ds \right]^{\frac{1}{b}},
	\end{eqnarray*}
	and the lemma follows.

\end{proof}

We recall that
\begin{align} \label{eq:aux_def_Gn}
  G_n(t,x) = \min\left\{H(t)^{-d},tf\left(\tfrac{|x|}{4}\right)\right\} + H(t)^{-d}\left(1+\frac{|x|}{H(t)}\right)^{-n}.
\end{align}

 It is useful to note that for every fixed $n$ and $t$ the function $\R^d \ni x \mapsto G_n(t,x)$ is bounded and radial non-increasing. 

\begin{lem}\label{Gcomp_p} If (A) and (B) hold, then for every $n\geq d+\alpha_2$ we have
  $$
	  G_n(t,x) \approx \min\{h(t)^{-d},tf(|x|)\} \approx p(t,x), \quad |x|<2, t\in (0,t_0], 
	$$
	and
	\begin{equation*}
	  G_n(t,y-x) \geq C \min\left\{h(t-s)^{-d}\left(1+\tfrac{|w-x|}{h(t-s)}\right)^{-n},p(s,y-w)\right\}, 
	\end{equation*}
	for all $x,y,w\in\Rd$, and $0<s<t\leq t_0$.
	
\end{lem}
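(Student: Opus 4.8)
The plan is to treat the two assertions separately. For the first, the comparison $\min\{h(t)^{-d},tf(|x|)\}\approx p(t,x)$ is precisely \eqref{eq:ptx_est}, so only the first $\approx$ needs an argument, and only for $|x|<2$. Using $h(t)\approx H(t)$ on $(0,t_0]$ (Lemma \ref{lem:doubling_h}) together with monotonicity of $f$, the lower bound $G_n(t,x)\geq\min\{H(t)^{-d},tf(|x|/4)\}\gtrsim\min\{h(t)^{-d},tf(|x|)\}$ is immediate. For the upper bound one uses $f(|x|/4)\leq Cf(|x|)$ — valid since $|x|/4,|x|\in(0,2)$ so \eqref{eq:doubling_kappa} applies — to control the first summand of $G_n$, and then shows the tail summand $H(t)^{-d}(1+|x|/H(t))^{-n}$ is $\lesssim\min\{h(t)^{-d},tf(|x|)\}$. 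It is trivially $\lesssim H(t)^{-d}\approx h(t)^{-d}$; to see it is $\lesssim tf(|x|)$, one splits on $|x|\leq h(t)$, where $tf(|x|)\geq tf(h(t))\approx h(t)^{-d}$ (the comparison at $|x|=h(t)$ follows by combining \eqref{est_p_small_x} and \eqref{est_p_large_x}), and on $h(t)<|x|<2$, where $1+|x|/H(t)\approx|x|/h(t)$ makes the tail $\approx h(t)^{n-d}|x|^{-n}$, while \eqref{eq:doubling_kappa} gives $\tfrac{f(|x|)}{f(h(t))}\geq c(h(t)/|x|)^{d+\alpha_2}$ and hence $tf(|x|)\gtrsim h(t)^{\alpha_2}|x|^{-d-\alpha_2}$; the desired inequality then reduces to $h(t)^{n-d-\alpha_2}\leq|x|^{n-d-\alpha_2}$, which holds because $|x|>h(t)$ and $n\geq d+\alpha_2$. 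This is the only place where the hypothesis $n\geq d+\alpha_2$ enters.

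For the second assertion, write $u=y-w$ and $v=w-x$, so that $y-x=u+v$, and set $A=h(t-s)^{-d}(1+|v|/h(t-s))^{-n}$, $B=p(s,u)$; the claim is $G_n(t,u+v)\gtrsim\min\{A,B\}$. The pivotal elementary remark is that $\min\{A,B\}\lesssim H(t)^{-d}$: indeed $A\leq h(t-s)^{-d}$ and $B\leq C_6 h(s)^{-d}$ by \eqref{eq:ptx_est}, and since $s+(t-s)=t$ we have $\max\{s,t-s\}\geq t/2$, so one of $h(s),h(t-s)$ is at least $h(t/2)\gtrsim h(t)\approx H(t)$ by \eqref{eq:doubling_h}, and taking the minimum discards the large term.

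Next I case-split on $|u+v|$. If $|u+v|\leq H(t)$ then $G_n(t,u+v)\geq H(t)^{-d}(1+|u+v|/H(t))^{-n}\geq 2^{-n}H(t)^{-d}\gtrsim\min\{A,B\}$. If $|u+v|>H(t)$, then $1+|u+v|/H(t)\approx|u+v|/H(t)$ yields $G_n(t,u+v)\gtrsim H(t)^{n-d}|u+v|^{-n}$, and also $G_n(t,u+v)\geq\min\{H(t)^{-d},tf(|u+v|/4)\}$. If moreover $|u|\geq|u+v|/2$, then $B=p(s,u)\leq C_6 sf(|u|)\leq C_6 sf(|u+v|/2)\leq C_6 tf(|u+v|/4)$ (monotonicity of $f$ and $s\leq t$), so $\min\{A,B\}\leq B$ together with $\min\{A,B\}\lesssim H(t)^{-d}$ gives $\min\{A,B\}\lesssim\min\{H(t)^{-d},tf(|u+v|/4)\}\leq G_n(t,u+v)$. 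If instead $|u|<|u+v|/2$, then $|v|\geq|u+v|-|u|>|u+v|/2$, and writing $A=h(t-s)^{n-d}(h(t-s)+|v|)^{-n}\leq h(t-s)^{n-d}|v|^{-n}\lesssim H(t)^{n-d}|u+v|^{-n}\lesssim G_n(t,u+v)$ (using $h(t-s)\leq h(t)\lesssim H(t)$, $n\geq d$, and $|v|>|u+v|/2$) we get $\min\{A,B\}\leq A\lesssim G_n(t,u+v)$. These cases are exhaustive.

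I do not expect a genuine obstacle here. The one idea that makes everything run is the bound $\min\{A,B\}\lesssim H(t)^{-d}$, which lets each case be closed by comparison against one of the two summands of $G_n$; the remainder is routine bookkeeping with the doubling estimates \eqref{eq:doubling_kappa} and \eqref{eq:doubling_h} and with $h\approx H$.
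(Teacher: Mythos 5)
Your argument is correct. The first half is essentially the paper's own proof: split at $|x|\le h(t)$ vs.\ $h(t)<|x|<2$, use $tf(h(t))\approx h(t)^{-d}$ in the first case, and in the second use $\eqref{eq:doubling_kappa}$ to dominate the tail $H(t)^{n-d}|x|^{-n}$ by $tf(|x|)$ precisely when $n\ge d+\alpha_2$. For the second inequality, however, you take a genuinely different route. The paper proves it as a single chain of lower bounds on $G_n(t,y-x)$: it first replaces the two summands of $G_n$ by a single $2^{-n}\min\{h(t)^{-d},\,h(t)^{n-d}|y-x|^{-n}+tf(|y-x|/4)\}$ (via $\min\{A,B\}+\min\{A,C\}\ge\min\{A,B+C\}$), then substitutes $|y-x|\le 2(|w-x|\vee|y-w|)$ and $t\le 2((t-s)\vee s)$, uses the observation that $X\mapsto h(t)^{n-d}X^{-n}+tf(X)$ evaluated at $X=|w-x|\vee|y-w|$ equals the minimum of its values at $|w-x|$ and $|y-w|$, and finally drops one summand from each of the two resulting expressions to land on $\min\{h(t-s)^{-d},h(s)^{-d},h(t-s)^{n-d}|w-x|^{-n},sf(|y-w|)\}$. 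You instead work top-down from $\min\{A,B\}$: the pigeonhole $\max\{s,t-s\}\ge t/2$ gives the uniform bound $\min\{A,B\}\lesssim H(t)^{-d}$, after which a case split on $|y-x|\lessgtr H(t)$, subdivided by whether $|u|$ or $|v|$ carries at least half of $|u+v|$, closes each branch by matching against one of the two summands of $G_n$. Both proofs rest on the same two pigeonhole observations (in time and in space), but the paper compresses them into one algebraic chain whereas you make them explicit as cases; your version is more modular and arguably easier to audit, at the cost of being a bit longer when written in full.
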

\begin{proof}
  For $|x|\leq  h(t)$ we have (see \eqref{f(h)})
	$$
	  tf(|x|) \geq t f( h(t)) \geq c_1 h(t)^{-d}
	$$
	so $\min\{h(t)^{-d},tf(|x|)\}\geq c_2 h(t)^{-d}$, and, since
	$(1+\frac{|x|}{h(t)})^{-n} \leq 1$ we get $G_n(t,x)\leq c_3 \min\{h(t)^{-d},tf(|x|)\}$
	in this case. For $ h(t)\leq |x| \leq 2$ we still have $G_n(t,x)\leq 2h(t)^{-d}$ and using
	\eqref{f(h)} and \eqref{eq:doubling_kappa}
	we get
	\begin{align*}
	  tf(|x|) 
		& \geq  c_4 h(t)^{-d} \frac{f(|x|)}{f( h(t))} 
		\geq c_5 h(t)^{-d} 
		         \left(\frac{h(t)}{|x|}\right)^{d+\alpha_2} \\
		& \geq  c_5 h(t)^{-d} \left(\frac{ h(t)}{|x|}\right)^{n},
	\end{align*}
	provided $n\geq d+\alpha_2$. This yields
	$$
	  G_n(t,x) \leq tf\left(\tfrac{|x|}{4}\right) + h(t)^{-d}\left(\frac{h(t)}{|x|}\right)^{n} \leq c_6 t f(|x|),
	$$
	since from \eqref{eq:doubling_kappa} we have $f(\frac{|x|}{4})\leq c f(|x|)$. This 
	gives $G_n(t,x)\leq c_7 \min\{h(t)^{-d},tf(|x|)\}$. The opposite
	inequality follows directly from the definition of $G_n$.
	For every $t>s>0$ and $x,y,w\in\Rd,$ using Lemma \ref{lem:doubling_h} we obtain
\begin{align*}
  G_n(t,y-x) 
	& \geq  \min\{h(t)^{-d},tf\left(\tfrac{|y-x|}{4}\right)\} +
	         2^{-n} \min\{h(t)^{-d},h(t)^{n-d}|y-x|^{-n}\} \\
	& \geq  2^{-n} \min\{ h(t)^{-d},h(t)^{n-d}|y-x|^{-n} + tf\left(\tfrac{|y-x|}{4}\right)\} \\
	& \geq  c_8 \min\{ h(\tfrac{t}{2})^{-d},h(t)^{n-d}\left(|w-x|\vee |y-w|\right)^{-n} 
	        + tf\left(|w-x|\vee |y-w|\right) \} \\
	& \geq  c_8 \min\{ h((t-s)\vee s)^{-d},h(t)^{n-d}\left(|w-x|\right)^{-n} 
	         + tf\left(|w-x|\right),\\
	&       h(t)^{n-d}\left(|y-w|\right)^{-n} 
	         + tf\left(|y-w|\right) \} \\
	& \geq  c_8 \min\{ h(t-s)^{-d},h(s)^{-d},h(t-s)^{n-d}|w-x|^{-n},sf(|y-w|)\} \\
	& \geq  c_9 \min\{h(t-s)^{-d}\left(1+\tfrac{|w-x|}{h(t-s)}\right)^{-n},p(s,y-w)\}.
\end{align*}
\end{proof}

\begin{lem}\label{lem_DiffEst} If (A) and (B) hold then for every $n\geq d+\alpha_2$ 
there exists $C=C(n)$ such that
  \begin{equation}\label{DiffEst}
    |p(t,x) - p(t,y)| \leq C \left(\frac{|y-x|}{h(t)}\wedge 1\right) 
	  \left( G_n(t,x) + G_n(t,y) \right),\quad t\in (0,t_0],\, x,y\in\Rd.
  \end{equation}
\end{lem}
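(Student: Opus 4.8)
\textbf{Proof strategy for Lemma \ref{lem_DiffEst}.}

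The plan is to split into the regimes $|y-x| \geq \tfrac12 h(t)$ and $|y-x| < \tfrac12 h(t)$, reducing the first to the pointwise bounds of Lemma \ref{Gcomp_p} and the second to the gradient estimate of Lemma \ref{pderest} integrated along the segment from $x$ to $y$. In the first regime the factor $\left(\frac{|y-x|}{h(t)}\wedge 1\right)$ is comparable to $1$, so it suffices to show $p(t,x) \leq C G_n(t,x)$ and $p(t,y) \leq C G_n(t,y)$ and then use $|p(t,x)-p(t,y)| \leq p(t,x) + p(t,y)$. That pointwise comparison follows from Lemma \ref{p_t_est} (specifically \eqref{eq:ptx_est}, $p(t,x) \approx h(t)^{-d}\wedge tf(|x|)$), the doubling of $f$ from \eqref{eq:doubling_kappa} to absorb the $f(|x|/4)$, and the elementary observation that $h(t)^{-d}\wedge tf(|x|) \leq h(t)^{-d}(1+|x|/h(t))^{-n}$ is \emph{not} needed --- we only need the lower bound $G_n(t,x) \geq c(h(t)^{-d}\wedge tf(|x|))$, which is immediate from the first term of \eqref{eq:aux_def_Gn} together with $h \approx H$ (Lemma \ref{lem:doubling_h}) and $f(|x|/4)\approx f(|x|)$ for $|x|\leq 2$, plus the large-$|x|$ comparison already handled inside the proof of Lemma \ref{Gcomp_p}.

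In the second regime, $|y-x| < \tfrac12 h(t)$, I would write $p(t,x)-p(t,y) = -\int_0^1 \nabla_x p(t, x + \theta(y-x))\cdot (y-x)\, d\theta$ and bound $|\nabla_x p(t,\cdot)|$ using \eqref{eq:pder} with $|\beta|=1$: $|\partial_{x_i} p(t,w)| \leq C h(t)^{-1}\min\{h(t)^{-d}, tf(|w|/2) + h(t)^{-d}(1+|w|/h(t))^{-n}\}$. This produces the desired prefactor $|y-x|/h(t)$, which in this regime is $\leq \tfrac12$, hence comparable to $|y-x|/h(t)\wedge 1$. It then remains to see that for every $w$ on the segment one has $h(t)^{-d}\wedge\big(tf(|w|/2) + h(t)^{-d}(1+|w|/h(t))^{-n}\big) \leq C(G_n(t,x) + G_n(t,y))$; since $|w-x|\leq |y-x| < \tfrac12 h(t)$, the points $w$, $x$, $y$ are all mutually within $h(t)$, so $f(|w|/2)$, $f(|x|/4)$, $f(|y|/4)$ and the polynomial tails $(1+|\cdot|/h(t))^{-n}$ at these three points are all comparable --- this uses Lemma \ref{lem:flocal} (i.e. \eqref{flocal_a}) to compare $f$ at nearby arguments when $|w|$ is large, and the monotonicity/doubling of $f$ when $|w|$ is small, together with the trivial fact that $1+|w|/h(t)$, $1+|x|/h(t)$, $1+|y|/h(t)$ differ by at most an additive constant and hence multiplicatively by a constant.

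The main obstacle is the bookkeeping in the second regime when $|w|$ is of order $h(t)$ or smaller but not tiny: there one must pass between the ``$tf$'' description and the ``$h(t)^{-d}$'' description of the bound, and make sure the $\min\{\cdot,\cdot\}$ structure of $G_n$ is respected when comparing values at $x$, $y$, and the intermediate point $w$. I expect this to be routine but requires care: splitting into $|x| \leq 3h(t)$ and $|x| > 3h(t)$ (so that on the segment $|w| \geq |x|/2$ when $|x|$ is large, enabling Lemma \ref{lem:flocal} with $R = h(t_0)$, $s = |x| - |w| \leq \tfrac12 h(t) \leq \tfrac{|x|}{?}$), and in the small-$|x|$ case simply bounding everything by $C h(t)^{-d} \leq C' G_n(t,x)$ via \eqref{f(h)}. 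Once these comparisons are in place, combining the two regimes gives \eqref{DiffEst} with a constant $C = C(n)$.
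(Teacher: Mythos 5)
Your proposal is correct and follows essentially the same route as the paper: split at $|y-x|$ versus $h(t)$, bound the difference by the sum via \eqref{eq:ptx_est} in the large-separation case, and use the mean value theorem together with the gradient bound from Lemma \ref{pderest} in the small-separation case, concluding by comparing $G_n$ at the intermediate point with $G_n(t,x)$ and $G_n(t,y)$. The only cosmetic difference is that the paper first records the gradient bound in the form $|\partial_{x_i} p(t,x)|\leq C h(t)^{-1}G_n(t,2x)$ (the factor $2$ absorbs the $f(|x|/2)$ from \eqref{eq:pder}), after which the comparison $\sup_{\zeta}G_n(t,2(x+\zeta(y-x)))\leq c\,(G_n(t,x)+G_n(t,y))$ follows directly from $G_n(t,\cdot)$ being radial non-increasing, sidestepping the explicit appeal to Lemma \ref{lem:flocal} that you sketch.
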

\begin{proof}
It follows from Lemma \ref{pderest} that 
\begin{equation}\label{pDerEst}
  \left|\frac{\partial}{\partial x_i} p(t,x)\right| \leq c_1 (h(t))^{-1} G_n(t,2x),
\end{equation}
for all $x\in\Rd$, $t\in (0,t_0)$ and every $i\in\{1,...,d\}.$ 

If $|y-x|>h(t)$ then we just estimate the difference of functions by their sum and \eqref{DiffEst} follows from \eqref{eq:ptx_est}.
If $|y-x|\leq  h(t)$ then using the Taylor expansion and \eqref{pDerEst} we get
\begin{eqnarray*}
  |p(t,x) - p(t,y)| 
	& \leq & |x-y| \cdot \sup_{\zeta\in [0,1]} |\nabla_x p(t,x+\zeta(y-x))|\\
	& \leq & c_2 |x-y|\frac{1}{h(t)} \sup_{\zeta\in [0,1]} G_n(t,2(x+\zeta(y-x))) \\
	& \leq & c_3 \frac{|y-x|}{h(t)} \left(G_n(t,x)+G_n(t,y)\right),
\end{eqnarray*}
since $G_n(t,\cdot)$ is isotropic non-increasing function. 
\end{proof}

\begin{proof}[Proof of Theorem \ref{THE2}.]
First let us observe that by monotonicity of $n\mapsto G_n$ we may and do assume that $n>d+\alpha_2$.
If $|z-x|\geq h(t)$ then the inequality \eqref{Hoelkernel} follows from the estimate \eqref{tildep_est} and
Lemma \ref{p_t_est} so we assume further that $|z-x| < h(t).$  
Let
$$
  \phi (t,x,y) = \int_0^t \int_{\Rd} p(t-s,w-x)q(w)\widetilde{p}(s,w,y)\, dwds.
$$
It follows from \eqref{pertp} that
$$
\widetilde{p}(t,x,y) = p(t,y-x) + \phi(t,x,y).
$$
From Lemma \ref{lem_DiffEst} we get
\begin{align*}
  |\phi(t,z,y) - & \phi(t,x,y)|  \leq c_1 \int_0^t \int_{\Rd} 
	 \left(\frac{|z-x|}{h(t-s)}\wedge 1\right) \left(G_n(t-s,w-x)+G_n(t-s,w-z)\right)\\
	 & \times |q(w)|\widetilde{p}(s,w,y)\, dw ds.
\end{align*}

Since $\widetilde{p}(s,w,y) \leq c_2 p(s,y-w),$ $s < t,$ it is 
sufficient to estimate
$$
  \int_0^t\int_{\Rd} \left(\frac{|z-x|}{h(t-s)}\wedge 1\right) G_n(t-s,w-x) |q(w)| p(s,y-w)\, dwds
$$

Using Lemma \ref{Gcomp_p} we obtain
\begin{align*}
   & \int_0^t \int_{\Rd} \left(\frac{|z-x|}{h(t-s)}\wedge 1\right) 
	     G_n(t-s,w-x) |q(w)| p(s,y-w)\, dwds \\
	 & \leq   c_3 \int_0^t \int_{\Rd} \left(\frac{|z-x|}{h(t-s)}\wedge 1\right) 
	         p\left(t-s,\tfrac{w-x}{4}\right) |q(w)| p(s,y-w)\, dwds \\
	   + &   c_4 \int_0^t \int_{|w-x|\geq 2} \left(\frac{|z-x|}{h(t-s)}\wedge 1\right) 
	         h(t-s)^{-d}\left(1 + \frac{|w-x|}{h(t-s)}\right)^{-n}   |q(w)| p(s,y-w)\, dwds \\
	&       := c_3 A + c_4 B.
\end{align*}
From Proposition \ref{prop:4G} we get
\begin{eqnarray*}
  A
	& \leq & c p\left(t,\tfrac{y-x}{4}\right) \int_0^t \int_{\Rd} \left(\frac{|z-x|}{h(t-s)}\wedge 1\right) 
	          |q(w)| \left(p\left(s,\tfrac{3}{4}(y-w)\right) + p\left(t-s,\tfrac{1}{4}(w-x)\right)\right)\, dwds \\
\end{eqnarray*}
and from Lemma \ref{Hoelderineq} we have
\begin{align*}
  A 
	& \leq  c_5 p\left(t,\tfrac{y-x}{4}\right) \left(\Psi(\tfrac{1}{|z-x|})\right)^{\frac{1}{a}-1} \int_{\Rd} |q(w)| \\
	&       \times \left[ \left(\int_0^t \left(p(s,\tfrac{w-x}{4})\right)^a\, ds\right)^{\frac{1}{a}} + \left(\int_0^t \left(p(s,\tfrac{3}{4}(y-w))\right)^a\, ds\right)^{\frac{1}{a}} \right]\, dw \\
	& \leq  c_6 p\left(t,\tfrac{y-x}{4}\right) \left(\Psi(\tfrac{1}{|z-x|})\right)^{\frac{1}{a}-1} 
	         \sup_{x\in\Rd} \int_{\Rd} |q(w)|  \left[\int_0^t \left(p(s,\tfrac{w-x}{4})\right)^a\, ds\right]^{\frac{1}{a}}\, dw, \\
	& = 4^d c_6 p\left(t,\tfrac{y-x}{4}\right) \left(\Psi(\tfrac{1}{|z-x|})\right)^{\frac{1}{a}-1} 
	         \sup_{x\in\Rd} \int_{\Rd} |q(4w)|  \left[\int_0^t \left(p(s,w-\tfrac{x}{4})\right)^a\, ds\right]^{\frac{1}{a}}\, dw,
\end{align*}
so by Corollary \ref{CorKato_Eq} we get $A\leq c_6 p\left(t,\tfrac{y-x}{4}\right) \left(\Psi(\tfrac{1}{|z-x|})\right)^{\frac{1}{a}-1}.$

Lemma \ref{Gcomp_p} yields
\begin{eqnarray*}
  B
	& \leq & c_7 \, G_n(t,y-x) \int_0^t \int_{|w-x|\geq 1} \left(\tfrac{|z-x|}{h(t-s)}\wedge 1\right)
	          |q(w)| \\
 &       & \times \left(p(s,y-w) + h(t-s)^{-d}\left(1+\tfrac{|w-x|}{h(t-s)}\right)^{-n}\right)\, dwds \\
 &   =   & c_7 \, G_n(t,y-x) \int_{|w-x|\geq 1} |q(w)| \left( \int_0^t  
           \left(\tfrac{|z-x|}{h(t-s)}\wedge 1\right) p(s,y-w)\, ds \right) \, dw  \\
&        & + \,\, c_7\, G_n(t,y-x)  \int_{|w-x|\geq 1} |q(w)|  \left( \int_0^t  
           \left(\tfrac{|z-x|}{h(t-s)}\wedge 1\right) h(t-s)^{-d}\left(1+\tfrac{|w-x|}{h(t-s)}\right)^{-n}\, ds \right)\, dw.
\end{eqnarray*}
The first integral we estimate exactly as above and for the second we use the decomposition of $\Rd$ into unit cubes $K_\beta,$
$\beta\in\N$ and define $I=\{\beta\in\N:\: K_\beta\cap B(x,1) = \emptyset\}$. Using Corollary \ref{Cor_qint} we get
\begin{eqnarray*}
  &      & \int_{|w-x|\geq 1} |q(w)| \left( \int_0^t \left(\tfrac{|z-x|}{h(t-s)}\wedge 1\right) h(t-s)^{-d} \left(1+\tfrac{|w-x|}{h(t-s)}\right)^{-n}\, ds \right)\, dw \\
	& \leq & \int_{|w-x|\geq 1} |q(w)| \int_0^t |z-x| h(t-s)^{n-d-1} |w-x|^{-n} \, dsdw \\
	&   =  & |z-x| \int_0^t h(s)^{n-d-1}\, ds \int_{|w-x|\geq 1} |q(w)| |w-x|^{-n}\, dw \\
	& \leq & c_8 t h(t)^{n-d-1} |z-x| \left(\int_{|w-x|\leq 1+\sqrt{d}} |q(w)|\, dw + \sum_{\beta\in I}\int_{K_\beta}|q(w)| |w-x|^{-n}\, dw\right) \\
	& \leq & c_9 t h(t)^{n-d-1} |z-x| \sup_{y\in\Rd} \int_{|w-y|\leq 1+\sqrt{d}} |q(w)|\, dw	. 
\end{eqnarray*}
Finally, using Lemma \ref{lem:doubling_h} and (B) we obtain $$\Psi\left(\tfrac{1}{|z-x|}\right) \geq c_{10}|z-x|^d f(|z-x|) \geq c_{11} |z-x|^{-\alpha_1}, \quad \text{for}  \quad |z-x|\leq 2,$$ hence
\begin{eqnarray*}
  A + B
	& \leq & c_{12}\, G_n(t,y-x) |z-x|^{\tfrac{\alpha_1}{b}}.
\end{eqnarray*}
This and Lemma \ref{lem_DiffEst} yield
$$
  |\widetilde{p}(t,z,y) - \widetilde{p}(t,x,y)| \leq c_{13} \left(\tfrac{|z-x|}{h(t)}\wedge 1\right)^{\tfrac{\alpha_1}{b}}\left(G_n(t,y-z) + G_n(t,y-x)\right).
$$

Now we assume that $\alpha_1>1$ and $a>\tfrac{\alpha_1}{\alpha_1-1}$ and prove the differentiability 
of $\widetilde{p}(\cdot,y).$

First we prove that
there exists $C$ such that for all $x,y\in\Rd$ we have 
\begin{equation}\label{DiffCond}
  \int_0^t \int_{\Rd} \frac{G_n(t-s,z-x)}{h(t-s)} |q(z)| p(s,y-z)\, dzds \leq C t^{\frac{1}{b}}h(t)^{-1}G_n(t,y-x).
\end{equation}
Similarly as above, we define
$$
  A:= \int_0^t \int_{\Rd} \frac{1}{h(t-s)} 
	         p(t-s,\tfrac{z-x}{4}) |q(z)| p(s,y-z)\, dzds,
$$
and
$$
  B:= \int_0^t \int_{|z-x|\geq 2} 
	         h(t-s)^{-d-1}\left(1 + \frac{|z-x|}{h(t-s)}\right)^{-n}   |q(z)| p(s,y-z)\, dzds
$$

It follows from Proposition \ref{prop:4G} that
$$
  A \leq c_{14} p\left(t,\tfrac{y-x}{4}\right) \int_0^t \int_{\Rd} \frac{|q(z)|}{h(t-s)} \left(p\left(s,\tfrac{3(y-z)}{4}\right)+p\left(t-s,\tfrac{z-x}{4}\right)\right)\, dzds 
$$
and by H\"older inequality, \eqref{extraKato} and Lemma \ref{integratingh} we get
\begin{eqnarray*}
  A 
	& \leq & c_{15} p\left(t,\tfrac{y-x}{4}\right) \int_{\Rd} |q(z)| \left(\int_0^t (h(s))^{-b}\, ds\right)^{\frac{1}{b}} \\
	&      & \cdot
	\left(\left(\int_0^t \left(p\left(s,\tfrac{3(y-z)}{4}\right)\right)^a\, ds \right)^{\frac{1}{a}} + \left(\int_0^t\left(p\left(t-s,\tfrac{z-x}{4}\right)\right)^a\, ds\right)^{\frac{1}{a}}\right)\, dz \\
	& \leq & c_{16} \frac{t^{\frac{1}{b}}}{h(t)} p\left(t,\tfrac{y-x}{4}\right), 
\end{eqnarray*}
provided $b<\alpha_1$.
In the same way as above we obtain
\begin{eqnarray*}
  B 
	& \leq & c_{17} G_n(t,y-x) \int_0^t \int_{|z-x|\geq 1} \frac{|q(z)|}{h(t-s)} \left(p(s,y-z) + h(t-s)^{-d}\left(1+\tfrac{|z-x|}{h(t-s)}\right)^{-n}\right)\, dzds \\
	&  =  &  c_{17} G_n(t,y-x) \int_{|z-x|\geq 1} |q(z)| \left( \int_0^t  \frac{p(s,y-z)}{h(t-s)} \, ds 
	 +   \int_0^t \frac{|z-x|^{-n}}{(h(t-s))^{d+1-n}}\, ds \right)\, dz,
\end{eqnarray*}
For the first integral over time we use the H\"older inequality to get
$$
  \int_0^t  \frac{p(s,y-z)}{h(t-s)} \, ds \leq \left( \int_0^t (h(t-s))^{-b}\, ds \right)^{\frac{1}{b}}
	\left(\int_0^t \left(p(s,y-z)\right)^a\, ds\right)^{\frac{1}{a}}.
$$
Then \eqref{extraKato} and Lemma \ref{integratingh} yield
$$
  \int_{|z-x|\geq 1} |q(z)| \left(  \int_0^t  \frac{p(s,y-z)}{h(t-s)} \, ds\right) dz \leq \frac{c_{18}t^{\frac{1}{b}}}{h(t)}.
$$
For the second integral we use Lemma \ref{integratingh} and get
$$
  \int_0^t (h(t-s))^{-d-1+n}\, ds \leq c_{19} t(h(t))^{-d-1+n},
$$
so it is sufficient to estimate
$$
  \int_{|z-x|\geq 1} |q(z)| |z-x|^{-n} dz,
$$
by a constant, which we do exactly as above. Finally, we obtain
$$ 
  A + B \leq c_{20} G_n(t,y-x) \cdot t^{\frac{1}{b}}h(t)^{-1}, 
$$
and \eqref{DiffCond} for $t\in (0,t_0]$ follows.
Now, for every $r>0$ we have
\begin{align*}
 & \frac{\phi (t,x+re_i,y)  - \phi(t,x,y) }{r}  \\
& = \int_0^t \int_{\Rd} \frac{p(t-s,w-x-re_i)-p(t-s,w-x)}{r}q(w)\widetilde{p}(s,w,y)\, dwds \\
& = \int_0^t \int_{\Rd} \frac{1}{r}\left(\int_0^r \frac{\partial }{\partial x_i}p(t-s,w-x-\rho e_i)\, d\rho \right) 
q(w)\widetilde{p}(s,w,y)\, dwds.
\end{align*}
If $r\leq \tfrac{1}{2} h(t-s)$ then it follows from Corollary \ref{pleqp} that $$G_n(t-s,w-x-\rho e_i) \leq c_{21} G_n(t-s,w-x),$$ hence 
$$
  \indyk{\left(0,t-\frac{1}{\Psi(1/r)}\right)}(s) \frac{1}{r} \left|\int_0^r \frac{\partial }{\partial x_i}p(t-s,w-x-\rho e_i)\, d\rho \right|
	\leq c_{22} h(t-s)^{-1} G_n(t-s,w-x).
$$
This and \eqref{DiffCond} yields (by dominated convergence) that
\begin{align*}
  \lim_{r\to 0}\int_0^{t-\frac{1}{\Psi(1/r)}} &
	\int_{\Rd} \frac{p(t-s,w-x-re_i)-p(t-s,w-x)}{r}q(w)\widetilde{p}(s,w,y)\, dwds
	\\
	& = -\int_0^{t} 
	\int_{\Rd} \frac{\partial}{\partial x_i} p(t-s,w-x) q(w)\widetilde{p}(s,w,y)\, dwds.
\end{align*}
Furthermore, using H\"older inequality we get
\begin{align*}
  \int_{t-\frac{1}{\Psi(1/r)}}^t &
	\int_{\Rd} \left|\frac{p(t-s,w-x-re_i)-p(t-s,w-x)}{r}\right| |q(w)|\widetilde{p}(s,w,y)\, dwds \\
	& = \int_0^{\frac{1}{\Psi(1/r)}} 
	\int_{\Rd} \left|\frac{p(s,w-x-re_i)-p(s,w-x)}{r}\right| |q(w)|\widetilde{p}(t-s,w,y)\, dwds \\
	& \leq c_{23} h(t/2)^{-d} \int_0^{\frac{1}{\Psi(1/r)}} 
	\int_{\Rd} \frac{p(s,w-x-re_i)+p(s,w-x)}{r} |q(w)|\, dwds \\
	& \leq c_{24} h(t/2)^{-d} \sup_{x\in\Rd} \int_0^{\frac{1}{\Psi(1/r)}} 
	\int_{\Rd} \frac{p(s,w-x)}{r} |q(w)|\, dwds \\
	& \leq c_{24} h(t/2)^{-d} \sup_{x\in\Rd} \int_{\Rd} |q(w)| \left(\int_0^{\frac{1}{\Psi(1/r)}} 
	 \left[p(s,w-x)\right]^a \, ds \right)^{\frac{1}{a}} \, dw \\
	& \times \frac{1}{r} \left(\int_0^{\frac{1}{\Psi(1/r)}}  
	  \, ds \right)^{\frac{1}{b}} \to 0
\end{align*}
as $r\to 0,$ since $\frac{1}{r\Psi(1/r)^{1/b}} \leq c_{25} r^{\frac{\alpha_1}{b}-1},$ and $\alpha_1/b-1 = \alpha_1-\frac{\alpha_1}{a}-1>0.$ This yields the existence of 
$$
  \frac{\partial}{\partial x_i} \tilde p(t,x,y) = -\frac{\partial}{\partial x_i} p(t,y-x)-\int_0^{t} 
	\int_{\Rd} \frac{\partial}{\partial x_i} p(t-s,w-x) q(w)\widetilde{p}(s,w,y)\, dwds,
$$
and the estimate \eqref{DerEst} follows from \eqref{DiffCond} and \eqref{pDerEst}.
\end{proof}

 For the reader convenience, we also give a short justification of Corollary \ref{cor:reg_sem}.

\begin{proof}[Proof of Corollary \ref{cor:reg_sem}]
We first show (1). Fix $\beta \in (0,\alpha_1/2)$, $t_0>0$, $a \geq \alpha_1/(\alpha_1-\beta)$ and consider $q\in\KatoC_a$.
Then by \eqref{Hoelkernel} for every $n>0$ and for every $\varphi\in L^p(\R^d)$, $x,y \in \R^d$ and $t \in (0,t_0)$, 
\begin{equation*}
  |\widetilde{P}_t\varphi(x) - \widetilde{P}_t \varphi(y)| \leq c 
	   \left(\tfrac{|x-y |}{h(t)}\wedge 1\right)^{\tfrac{\alpha_1}{b}}\sup_{w \in \R^d} \int_{\R^d} G_n(t,w-z) \varphi(z) dz,
\end{equation*}
with $c=c(t_0)$. Since 
$$
\frac{\alpha_1}{b} = \alpha_1\left(1-\frac{1}{a}\right) \geq \alpha_1\left(1-\frac{\alpha_1-\beta}{\alpha_1}\right) = \beta,
$$
we are left to find an upper bound for the integral on the right hand side. To this end, we choose $q=p/(p-1)$, $n = (d+1)/q$ for $p \in [1,\infty)$, $q =\infty$ for $p=1$, and $q=1$ for $p=\infty$ (we use the standard convention that $a/\infty = 0$ whenever $a>0$). Observe that by \eqref{eq:aux_def_Gn} and \eqref{eq:ptx_est} there exists $c_1 = c_1(t_0,p)$ such that
\begin{align*}
\int_{\R^d} G_n(t,w-z) \varphi(z) dz  \leq & \, c_1 \int_{\R^d} p(t,(w-z)/4) \varphi(z) dz \\ 
& + h(t)^{-d} \int_{\R^d} \left(1+\frac{|w-z|}{h(t)}\right)^{-\frac{d+1}{q} }\varphi(z)dz.
\end{align*}

Suppose first that $p \in [1,\infty)$. By using the H\"older inequality and the fact that $p(t,x)$ is the density of a probability measure satisfying estimates \eqref{eq:ptx_est}, we get
\begin{align*}
\int_{\R^d} & p(t,(w-z)/4) \varphi(z) dz \\ & \leq \left(\int_{\R^d} p(t,(w-z)/4) dz\right)^{1/q} \left(\int_{\R^d} p(t,(w-z)/4) |\varphi(z)|^p dz\right)^{1/p} \\
            & \leq c_2 h(t)^{-d/p} \left\|\varphi\right\|_p,
\end{align*}
with $c_2=c_2(t_0,p)$. Similarly,
$$
\int_{\R^d} \left(1+\frac{|w-z|}{h(t)}\right)^{-\frac{d+1}{q} }\varphi(z)dz \leq \left(\int_{\R^d} \left(1+\frac{|w-z|}{h(t_0)}\right)^{-d-1}dz \right)^{1/q} 
\left\|\varphi\right\|_p.
$$
For $p=1$ and $p=\infty$ the corresponding bounds follow directly.
Collecting all the above estimates, we conclude that 
$$
  |\widetilde{P}_t\varphi(x) - \widetilde{P}_t \varphi(y)| \leq c_3 h(t)^{-d/p-\beta}  |x-y|^\beta \left\|\varphi\right\|_p, \quad x, y \in \R^d, \ t \in (0,t_0),
$$
where $c_3 = c_3(t_0,p)$. This completes the proof of part (1). 

Assertion (2) is a straightforward consequence of \eqref{DerEst} and the estimate of the integral $\int_{\R^d} G_n(t,x-z) \varphi(z) dz$ that we provided in the proof of part (1). 
\end{proof}


\bibliographystyle{abbrv}
\bibliography{pert_bib}

\end{document}